\DeclareMathOperator{\C}{\mathcal{C}}
\DeclareMathOperator{\V}{\mathbb{V}}
\newtheorem{theorem}{Theorem}[section]
\newtheorem{lemma}[theorem]{Lemma}
\newtheorem{corollary}[theorem]{Corollary}
\newtheorem{definition}[theorem]{Definition}
\newtheorem{proposition}[theorem]{Proposition}
\newtheorem{example}[theorem]{Example}
\newtheorem{remark}[theorem]{Remark}
\newcommand{\fqn}{\mathbb{F}_{q^n}}
\newcommand{\cC}{{\mathcal C}}
\newcommand{\cS}{{\mathcal S}}
\newcommand{\F}{{\mathbb F}}
\newcommand{\w}{{\mathbf w}}
\newcommand{\fq}{{\mathbb F}_{q}}
\newcommand{\la}{\langle}
\newcommand{\ra}{\rangle}
\newcommand{\PG}{\mathrm{PG}}
\title{Scattered subspaces and related codes}
\author{Giovanni Zini and Ferdinando Zullo\thanks{
The first author is funded by the project ''Attrazione e Mobilità dei
Ricercatori'' Italian PON Programme (PON-AIM 2018 num. AIM1878214-2).
The research was supported by the project ''VALERE: VAnviteLli pEr la RicErca" of the University of Campania ''Luigi Vanvitelli'', and by the Italian National Group for Algebraic and Geometric Structures and their Applications (GNSAGA - INdAM).}}
\date{}
\begin{document}
\maketitle

\begin{center}
\emph{Dedicated to the memory of Elisa Montanucci.\\ We unite us to her family's pain.}
\end{center}

\begin{abstract}
After a seminal paper by Shekeey (2016), a connection between maximum $h$-scattered $\fq$-subspaces of $V(r,q^n)$ and maximum rank distance (MRD) codes has been established in the extremal cases $h=1$ and $h=r-1$.
In this paper, we propose a connection for any $h\in\{1,\ldots,r-1\}$, extending and unifying all the previously known ones.
As a consequence, we obtain examples of non-square MRD codes which are not equivalent to generalized Gabidulin or twisted Gabidulin codes.
Up to equivalence, we classify MRD codes having the same parameters as the ones in our connection.
Also, we determine the weight distribution of codes related to the geometric counterpart of maximum $h$-scattered subspaces.
\end{abstract}

\bigskip
{\it AMS subject classification:} 51E20, 94B27, 15A04

\bigskip
{\it Keywords:} rank metric code; scattered subspace; linear code; linear set

\section{Introduction}

An $\fq$-subspace $U$ of an $r$-dimensional $\fqn$-vector space $V$ is said to be \emph{$h$-scattered} if $U$ spans $V$ over $\fqn$ and, for any $h$-dimensional $\fqn$-subspace $H$ of $V$, $U$ meets $H$ in an $\fq$-subspace of dimension at most $h$.
This family of subspaces was introduced in \cite{CsMPZ} as a generalization of $1$-scattered subspaces, which are simply known as scattered subspaces and were originally presented in \cite{BL2000}. Since then, the theory of scattered subspaces has constantly increased its importance, mainly because of their applications to several algebraic and geometric objects, such as finite semifields, blocking sets, two-intersection sets; see \cite{Lavrauw, LVdV2015,Polverino}. After the seminal paper \cite{Sheekey} by Sheekey, the interest towards scattered subspaces was also boosted by their connections with the theory of rank metric codes, whose relevance in communication theory relies on its applications to random linear network coding and cryptography.

A $h$-scattered $\fq$-subspace of highest dimension in $V(r,q^n)$ is called \emph{maximum $h$-scattered}; its dimension is upper bounded by $\frac{rn}{h+1}$.
This bound is known to be achieved in the following cases: $h=1$, $h=r-1$, $(h+1)\mid r$ or $h=n-3$; see Section \ref{sec:h-scatt}.
When $h=1$ or $h=r-1$, maximum $h$-scattered subspaces are strongly related to rank metric codes having the greatest correcting and detecting capabilities for fixed dimension and ambient space, that is, to maximum rank distance (MRD) codes. This has been shown in \cite{Sheekey,CSMPZ2016,PZ} for $h=1$ and in \cite{Lunardon2017,ShVdV} for $h=r-1$, while no relation was known for $1<h<r-1$.
In this paper we establish a connection between $\fq$-subspaces of $V$ and rank metric codes. We start by generalizing the construction of rank-metric codes $\mathcal{C}_U$ provided in \cite{CSMPZ2016} and defined by an $\fq$-subspace $U$ of $V$.
We detect those $U$'s such that $\mathcal{C}_U$ is MRD; among these are the maximum $1$- and $(r-1)$-scattered subspaces. Actually, the code $\mathcal{C}_U$ is MRD exactly when $U$ is the dual of a $h$-scattered subspace of dimension $\frac{rn}{h+1}$, for some $1\leq h\leq r-1$. Therefore, our connection extends and unifies the ones in \cite{Sheekey,CSMPZ2016,PZ,ShVdV,Lunardon2017}.
To this aim, we exhibit two characterizations of $h$-scattered subspaces of dimension $\frac{rn}{h+1}$, which are of independent interest.
Moreover we prove that, up to equivalence, the MRD codes of type $\mathcal{C}_U$ are exactly the $\fq$-linear MRD codes with parameters $(\frac{rn}{h+1},n,q;n-h)$ and maximum right idealiser.

An essential though difficult task is to decide whether or not two rank metric codes with the same parameters are equivalent (especially when they correspond to non-square matrices).
A remarkable aspect of the MRD codes that we construct is that we are able to determine one of their idealisers; this allows to prove that some of them are not equivalent to punctured generalized Gabidulin codes nor to punctured generalized twisted Gabidulin codes.

The geometric counterparts of $h$-scattered subspaces of dimension $\frac{rn}{h+1}$ are called \emph{$h$-scattered linear sets} of rank $\frac{rn}{h+1}$.
They are known to have at most $h+1$ intersection numbers with respect to the hyperplanes, and hence are of interest in coding theory when regarded as projective systems.
The intersection numbers w.r.t.\ the hyperplanes of $h$-scattered linear sets of rank $\frac{rn}{h+1}$ have been determined in \cite{BL2000} for $h=1$, in \cite{NZ} for $h=2$, and in \cite{ShVdV} for $h=r-1$.
We determine them for any $1\leq h\leq r-1$, by using the connection between MRD codes and $h$-scattered subspaces of dimension $\frac{rn}{h+1}$ presented in Section \ref{sec:subMRD}.
As a byproduct, we compute the weight distribution of the arising codes; this answers a question posed by Randrianarisoa \cite{Ra}.

The paper is organized as follows.
Section \ref{sec:pre} contains preliminary results on $h$-scattered subspaces (Section \ref{sec:h-scatt}), dualities of subspaces, both ordinary and Delsarte (Section \ref{sec:dualities}), linear codes, equipped with the Hamming distance or with the rank metric (Section \ref{sec:codes}).
In Section \ref{sec:subMRD} we describe the connection between $\fq$-subspaces and rank metric codes, characterizing those codes which are MRD, and showing that $\fq$-linear MRD $(\frac{rn}{h+1},n,q;n-h)$-codes with maximum right idealiser are exactly the codes of type $\mathcal{C}_U$, up to equivalence.
This connection is shown to extend and unify the previously known ones in Section \ref{sec:uni}.
Section \ref{sec:twocharact} completes the connection between $h$-scattered subspaces of dimension $\frac{rn}{h+1}$ and MRD codes, by means of two characterizations which are proved through the ordinary and Delsarte dualities.
Section \ref{sec:noGab} provides families of MRD codes which are not equivalent to punctured generalized (twisted) Gabidulin codes.
Section \ref{sec:h+1weights} computes the weight distribution of the linear codes arising from $h$-scattered linear sets of rank $\frac{rn}{h+1}$, seen as projective systems.
Finally, in Section \ref{sec:open}, we resume our results and state some open questions.

\section{Preliminaries}\label{sec:pre}

\subsection{Scattered $\mathbb{F}_q$-subspaces with respect to $\mathbb{F}_{q^n}$-subspaces}\label{sec:h-scatt}

Let $V=V(m,q)$ denote an $m$-dimensional $\F_q$-vector space.
A $t$-spread of $V$ is a set $\cS$ of $t$-dimensional $\F_q$-subspaces such that each vector of $V^*=V\setminus \{{\bf 0}\}$ is contained in exactly one element of $\cS$.
As shown by Segre in \cite{Segre}, a $t$-spread of $V$ exists if and only if $t$ divides $m$.

Let $V$ be an $r$-dimensional $\F_{q^n}$-vector space and let $\cS$ be an $n$-spread of $V$.
An $\F_q$-subspace $U$ of $V$ is called \emph{scattered} w.r.t.\ $\cS$ if $U$ meets every element of $\cS$ in an $\F_q$-subspace of dimension at most one; see \cite{BL2000}.
If we consider $V$ as an $rn$-dimensional $\F_q$-vector space, then it
is well-known that the one-dimensional $\F_{q^n}$-subspaces of $V$, viewed as $n$-dimensional $\F_q$-subspaces, form an $n$-spread of $V$. This spread is called the \emph{Desarguesian spread}.
In this paper scattered will always mean scattered w.r.t.\ the Desarguesian spread.
Blokhuis and Lavrauw \cite{BL2000} showed that the dimension of such subspaces is bounded by $rn/2$.
After a series of papers it is now known that when $rn$ is even there always exist scattered subspaces of dimension $rn/2$; they are called \emph{maximum scattered} \cite{BBL2000, BGMP2015, BL2000, CSMPZ2016}.

In \cite{CsMPZ}, the authors introduced a special family of scattered subspaces, named $h$-scattered subspaces.
Let $V$ be an $r$-dimensional $\F_{q^n}$-vector space and $h\leq r-1$ be a positive integer.
An $\F_q$-subspace $U$ of $V$ is called $h$-\emph{scattered} (or scattered w.r.t.\ the $h$-dimensional $\fqn$-subspaces) if $\la U \ra_{\F_{q^n}}=V$ and each $h$-dimensional $\F_{q^n}$-subspace of $V$ meets $U$ in an $\F_q$-subspace of dimension at most $h$.
The $1$-scattered subspaces are the scattered subspaces generating $V$ over $\F_{q^n}$. The same definition applied to $h=r$ describes the $n$-dimensional $\F_q$-subspaces of $V$ defining canonical subgeometries of $\PG(V,\F_{q^n})$. If $h=r-1$ and $\dim_{\F_q}(U)=n$, then $U$ is $h$-scattered exactly when $U$ defines a scattered $\F_q$-linear set with respect to the hyperplanes, introduced in \cite[Definition 14]{ShVdV}; see also \cite{Lunardon2017}.

Theorem \ref{th:bound} bounds the dimension of a $h$-scattered subspace.

\begin{theorem}\label{th:bound}{\rm \cite[Theorem 2.3]{CsMPZ}}
If $U$ is a $h$-scattered $\F_q$-subspace of dimension $k$ in $V=V(r,q^n)$, then one of the following holds:
\begin{itemize}
\item $k=r$ and $U$ defines a subgeometry $\PG(r-1,q)$ of $\PG(V,\F_{q^n})$;
\item $k\leq\frac{rn}{h+1}$.
\end{itemize}
\end{theorem}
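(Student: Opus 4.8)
The plan is to prove the bound by a direct-sum argument inside the ambient $\fq$-vector space $V$ (which has $\fq$-dimension $rn$), after first upgrading the hypothesis to a whole hierarchy of scatteredness. A preliminary step I would establish is that an $h$-scattered subspace is automatically $t$-scattered for every $1\leq t\leq h$, i.e.\ every $t$-dimensional $\fqn$-subspace meets $U$ in $\fq$-dimension at most $t$. Indeed, if some $t$-dimensional $\fqn$-subspace $H'$ met $U$ in an $\fq$-subspace of dimension at least $t+1$, then, since $\la U\ra_{\fqn}=V$ has $\fqn$-dimension $r\geq h+1>t$, I could enlarge $H'$ by $h-t$ suitable vectors of $U$ to an $h$-dimensional $\fqn$-subspace $H$; counting $\fq$-independent vectors gives $\dim_{\fq}(U\cap H)\geq (t+1)+(h-t)=h+1$, contradicting $h$-scatteredness. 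This is exactly where the hypothesis $h\leq r-1$ is used, to guarantee the room to enlarge.

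The core of the argument is the following claim. Put $t=\min(h+1,n)$ and choose $\mu_0,\ldots,\mu_{t-1}\in\fqn$ that are linearly independent over $\fq$. I would show that the sum $\mu_0 U+\mu_1 U+\cdots+\mu_{t-1}U$ is \emph{direct}. Suppose $\sum_{i=0}^{t-1}\mu_i u_i=0$ with $u_i\in U$, and set $H=\la u_0,\ldots,u_{t-1}\ra_{\fqn}$ with $e=\dim_{\fqn}H$. Since the $\mu_i$ are nonzero, this relation is a nontrivial $\fqn$-dependence among the $t$ vectors $u_i$, so $e\leq t-1\leq h$. The subtlety, and the step I expect to be the main obstacle, is that the $u_i$ need \emph{not} be $\fq$-independent, so the relation does not by itself exhibit a large intersection. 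I would resolve this by bounding $d:=\dim_{\fq}\la u_0,\ldots,u_{t-1}\ra_{\fq}$ from both sides: on one hand, $e$ of the $u_i$ are $\fqn$-independent, hence $\fq$-independent, so $d\geq e$; on the other hand the $u_i$ lie in $U\cap H$, and by the preliminary step $U$ is $e$-scattered, whence $d\leq\dim_{\fq}(U\cap H)\leq e$. Therefore $d=e$, which forces every $u_i$ to be an $\fq$-combination of a fixed $\fqn$-independent subset $u_0,\ldots,u_{e-1}$.

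Substituting these $\fq$-expressions into $\sum_i \mu_i u_i=0$ and collecting terms, the $\fqn$-independence of $u_0,\ldots,u_{e-1}$ yields, for each such index, a relation $\mu_i+\sum_{j\geq e}c_{ji}\mu_j=0$ with $c_{ji}\in\fq$; since the coefficient of $\mu_i$ is $1$, this contradicts the $\fq$-independence of the $\mu_i$ unless $e=0$, that is, unless all $u_i=0$. Hence the sum is direct, and counting $\fq$-dimensions gives $t\cdot k=\dim_{\fq}\bigl(\mu_0 U\oplus\cdots\oplus\mu_{t-1}U\bigr)\leq \dim_{\fq}V=rn$. If $n\geq h+1$ then $t=h+1$ and we obtain $k\leq \frac{rn}{h+1}$, the second alternative. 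If instead $n\leq h$ then $t=n$, so $nk\leq rn$ forces $k\leq r$; combined with $k\geq r$ (which holds because $\la U\ra_{\fqn}=V$ requires $U$ to contain an $\fqn$-basis), this gives $k=r$, so $U$ is the $\fq$-span of an $\fqn$-basis and defines a subgeometry $\PG(r-1,q)$ of $\PG(V,\fqn)$, the first alternative. Everything outside the two-sided dimension count of the second paragraph is routine bookkeeping.
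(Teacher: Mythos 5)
Your argument is correct: the preliminary upgrade from $h$-scattered to $t$-scattered for every $1\leq t\leq h$ (using $h\leq r-1$ and $\la U\ra_{\fqn}=V$ to enlarge a bad subspace), the directness of $\mu_0U+\mu_1U+\cdots+\mu_{t-1}U$ via the two-sided bound $e\leq d\leq \dim_{\fq}(U\cap H)\leq e$ followed by the substitution step against the $\fq$-independence of the $\mu_i$, and the concluding case split ($n\geq h+1$ giving $k\leq\frac{rn}{h+1}$; $n\leq h$ giving $k=r$ and a subgeometry) are all sound. One caveat on the comparison: this paper does not prove the statement at all, but imports it as \cite[Theorem 2.3]{CsMPZ}, so the only meaningful benchmark is that reference; your proof is the natural generalization of the Blokhuis--Lavrauw direct-sum argument for scattered subspaces, which is essentially the approach of the cited source, so your proposal should be regarded as a correct proof along the standard lines rather than a new route.
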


A $h$-scattered $\F_q$-subspace of highest possible dimension is said to be a {\em maximum $h$-scattered} $\F_q$-subspace.
Theorem \ref{th:inter} bounds the dimension of the intersection between a $h$-scattered subspace of dimension $\frac{rn}{h+1}$ and an $\fqn$-subspace of codimension $1$.

\begin{theorem}{\rm \cite[Theorem 2.8]{CsMPZ}}
	\label{th:inter}
	If $U$ is an $\frac{rn}{h+1}$-dimensional $h$-scattered $\F_q$-subspace of a vector space $V=V(r,q^n)$, then for any $(r-1)$-dimensional $\F_{q^n}$-subspace $H$ of $V$ we have
	\[\frac{rn}{h+1}-n\leq \dim_{\F_q}(U \cap H) \leq \frac{rn}{h+1}-n+h.\]
\end{theorem}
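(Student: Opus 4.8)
The lower bound is immediate from Grassmann's identity over $\F_q$: since $\dim_{\F_q}V=rn$, $\dim_{\F_q}U=\frac{rn}{h+1}$ and $\dim_{\F_q}H=(r-1)n$, we get $\dim_{\F_q}(U\cap H)\geq \dim_{\F_q}U+\dim_{\F_q}H-\dim_{\F_q}V=\frac{rn}{h+1}-n$. Thus the real content is the upper bound, and the plan is to transfer it to a statement about a \emph{dual} subspace.

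I would fix a nondegenerate $\F_q$-bilinear form on $V$ obtained as $\Tr_{q^n/q}$ of a nondegenerate $\fqn$-sesquilinear form, so that the orthogonal complement $W\mapsto W^\perp$ sends $\fqn$-subspaces to $\fqn$-subspaces and reverses dimensions. Then the dual $H^\perp$ of the hyperplane is a point, that is a $1$-dimensional $\fqn$-subspace with $\dim_{\F_q}H^\perp=n$, while $\dim_{\F_q}U^\perp=rn-\frac{rn}{h+1}=\frac{rhn}{h+1}$. Combining $(U+H)^\perp=U^\perp\cap H^\perp$ with $\dim_{\F_q}(U+H)=\dim_{\F_q}U+\dim_{\F_q}H-\dim_{\F_q}(U\cap H)$ yields the weight-duality relation $\dim_{\F_q}(U\cap H)=\dim_{\F_q}(U^\perp\cap H^\perp)+\frac{rn}{h+1}-n$. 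Hence the target inequality $\dim_{\F_q}(U\cap H)\leq\frac{rn}{h+1}-n+h$ is \emph{equivalent} to $\dim_{\F_q}(U^\perp\cap H^\perp)\leq h$; and as $H$ ranges over all hyperplanes, $H^\perp$ ranges over all points. So everything reduces to showing that the dual subspace $U^\perp$ meets every $1$-dimensional $\fqn$-subspace of $V$ in $\F_q$-dimension at most $h$.

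This dual-point bound is the heart of the matter: for $h=1$ it is the classical statement that the dual of a maximum scattered subspace is again scattered, and for general $h$ it is the assertion that the dual of a maximum $h$-scattered subspace is ``$h$-evasive on points''. My plan is to establish this as a genuine duality characterization rather than a formal manipulation, feeding in the extremality of $\dim_{\F_q}U=\frac{rn}{h+1}$ through Theorem \ref{th:bound}: one analyses, for each relevant intermediate $\fqn$-subspace, the intersection with $U$ and uses that the maximal dimension leaves no slack, so that the $h$-scattered constraints on $U$ are forced to be tight and can be pushed across $\perp$ to control $U^\perp\cap P$. It is worth stressing that $U^\perp$ is not itself a maximum scattered subspace for any parameter (its dimension $\frac{rhn}{h+1}$ is not of the form $\frac{rn}{h'+1}$ in general), so the point bound cannot be read off directly from Theorem \ref{th:bound} applied to $U^\perp$ and genuinely requires the duality.

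The main obstacle is exactly proving $\dim_{\F_q}(U^\perp\cap P)\leq h$ for every point $P$. A naive route avoiding duality — putting $W'=\la U\cap H\ra_{\fqn}$, noting that $U\cap W'$ is $h$-scattered in $W'$, and bounding it by Theorem \ref{th:bound} — only gives $\dim_{\F_q}(U\cap H)\leq\frac{(r-1)n}{h+1}$ (after dispatching the subgeometry alternative, which forces $\dim_{\F_q}(U\cap H)\le r-1\le\frac{rn}{h+1}-n+h$ whenever $n\geq h+1$). This is strictly weaker than $\frac{rn}{h+1}-n+h$ as soon as $n>h+1$, precisely because passing to $W'$ throws away the interaction of $U$ with the $\fqn$-directions transverse to $H$. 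The duality reformulation is what recovers that lost information, and carrying out the translation rigorously — i.e. proving that maximum $h$-scattered subspaces and subspaces meeting every point in $\F_q$-dimension at most $h$ are interchanged by $\perp$ — is where the difficulty concentrates.
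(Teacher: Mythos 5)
Your proposal does not contain a proof of the theorem. The Grassmann lower bound is fine, and the translation of the upper bound through Equation \eqref{eq:dualweight} is also correct: writing $P=H^{\perp}$, one gets $\dim_{\fq}(U\cap H)=\dim_{\fq}(U^{\perp_O}\cap P)+\frac{rn}{h+1}-n$, so the upper bound is equivalent to $\dim_{\fq}(U^{\perp_O}\cap\la \mathbf{v}\ra_{\fqn})\leq h$ for every $\mathbf{v}\in V\setminus\{\mathbf{0}\}$. But that is where the attempt stops being a proof: since $H\mapsto H^{\perp}$ is a bijection between hyperplanes and points, the dual point bound is not a stepping stone towards the theorem, it \emph{is} the theorem rewritten in other variables. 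The paragraph that should carry the argument (``one analyses, for each relevant intermediate $\fqn$-subspace, the intersection with $U$ and uses that the maximal dimension leaves no slack, so that the $h$-scattered constraints \ldots can be pushed across $\perp$'') names no subspaces, no inequalities, and no mechanism by which the hypothesis --- a condition on $h$-dimensional $\fqn$-subspaces --- gives any control on $U^{\perp_O}$ at points; you yourself identify this step as ``where the difficulty concentrates''. The only concrete tool you bring in, Theorem \ref{th:bound}, you have already (correctly) shown to be insufficient, since applying it inside $\la U\cap H\ra_{\fqn}$ yields only $\dim_{\fq}(U\cap H)\leq \frac{(r-1)n}{h+1}$. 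So the entire content of the statement beyond Grassmann is missing.

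Two further points. First, within this paper the implication you need (maximum $h$-scattered $\Rightarrow$ the ordinary dual meets every point in $\fq$-dimension at most $h$) is precisely the ``only if'' part of Corollary \ref{cor:caratterizzazione2}, which the paper proves \emph{by citing Theorem \ref{th:inter}} together with \eqref{eq:dualweight}; the converse part relies on the Delsarte-duality machinery of Section \ref{sec:twocharact} and on the extra hypothesis $n\geq h+3$, which Theorem \ref{th:inter} does not assume. Hence appealing to that characterization, or to the paper's duality results, to close your gap would be circular (and would in any case prove a weaker statement, restricted to $n\geq h+3$). Second, note that the paper itself gives no proof of this theorem: it is imported verbatim from \cite[Theorem 2.8]{CsMPZ}, where it is established by a self-contained argument on $U$ itself. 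This mirrors the classical case $h=1$: in \cite{BL2000} the hyperplane-intersection bound for maximum scattered subspaces is proved first, directly, and the statement ``the dual of a maximum scattered subspace is again scattered'' is then a consequence via \eqref{eq:dualweight}. Your plan reverses that logical order without supplying the direct argument that must sit at the bottom, so the gap is genuine.
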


Constructions of $h$-scattered $\F_q$-subspaces have been given in \cite{CsMPZ} and also in \cite{NPZZ}.
A generalization of $h$-scattered subspaces has been recently introduced in \cite{BCsMT}.

\subsection{Two dualities for $\mathbb{F}_q$-subspaces}\label{sec:dualities}

In this paper we need both ordinary and Delsarte dualities.

\subsubsection{Ordinary duality}\label{sec:classicalduality}

Let $\sigma \colon V\times V \rightarrow \mathbb{F}_{q^n}$ be a non-degenerate reflexive sesquilinear form over $V=V(r,q^n)$ and define
$\sigma' \colon V \times V \rightarrow \mathbb{F}_q, \, (\mathbf{u},\mathbf{v})\mapsto \mathrm{Tr}_{q^n/q}(\sigma(\mathbf{u},\mathbf{v}))$.
Once we regard $V$ as an $rn$-dimensional $\F_q$-vector space, $\sigma^\prime$ turns out to be a non-degenerate reflexive sesquilinear form over $V=V(rn,q)$.
Let $\perp$ and $\perp'$ be the orthogonal complement maps defined by $\sigma$ and $\sigma'$ on the lattices of the $\F_{q^n}$-subspaces and the $\F_q$-subspaces of $V$, respectively. The following properties hold (see \cite[Section 2]{Polverino} for the details).
\begin{itemize}
    \item $\dim_{\F_{q^n}}(W)+\dim_{\F_{q^n}}(W^\perp)=r$, for every $\F_{q^n}$-subspace $W$ of $V$.
    \item $\dim_{\F_{q}}(U)+\dim_{\F_{q}}(U^{\perp'})=nr$, for every $\F_{q}$-subspace $U$ of $V$.
    \item $W^\perp=W^{\perp'}$, for every $\F_{q^n}$-subspace $W$ of $V$.
    \item Let $W$ and $U$ be an $\F_{q^n}$-subspace and an $\F_q$-subspace of $V$ of dimension $s$ and $t$, repsectively. Then
    \begin{equation}\label{eq:dualweight} \dim_{\F_q}(U^{\perp'}\cap W^{\perp'})-\dim_{\F_q}(U\cap W)=rn-\dim_{\F_q}(U)-sn. \end{equation}
    \item Let $\sigma$, $\sigma_1$ be non-degenerate reflexive sesquilinear forms over $V$ and define $\sigma^\prime$, $\sigma_1^\prime$, $\perp$, $\perp_1$, $\perp'$ and $\perp_1'$ as above. Then there exists an invertible $\F_{q^n}$-linear map $f$ such that $f(U^{\perp'})=U^{\perp_1'}$, i.e. $U^{\perp'}$ and $U^{\perp_1'}$ are $\mathrm{GL}(V)$-equivalent. 
\end{itemize}
When $U$ is an $\F_q$-subspace of $V$, we denote by $U^{\perp_O}$ one of the $\F_q$-subspaces $U^{\perp'}$, where $\perp'$ is defined by the restriction to $\F_q$ of any non-degenerate reflexive sesquilinear form over $V$, as defined at the beginning of this section.

\subsubsection{Delsarte duality}\label{sec:Delsarteduality}

Let $U$ be a $k$-dimensional $\F_q$-subspace of a vector space $V=V(r,q^n)$, with $k>r$. By \cite[Theorems 1, 2]{LuPo2004} (see also \cite[Theorem 1]{LuPoPo2002}), there is an embedding of $V$ in $\V=V(k,q^n)$ with $\V=V \oplus \Gamma$ for some $(k-r)$-dimensional $\F_{q^n}$-subspace $\Gamma$ such that
$U=\la W,\Gamma\ra_{\F_{q}}\cap V$, where $W$ is a $k$-dimensional $\F_q$-subspace of $\V$ satisfying $\langle W\rangle_{\F_{q^n}}=\V$ and $W\cap \Gamma=\{{\bf 0}\}$.
Then $ \varphi:V\to\V/\Gamma$, $\mathbf{v}\mapsto \mathbf{v}+\Gamma$, is an $\fqn$-isomorphism such that $\varphi(U)=W+\Gamma$.

Following \cite[Section 3]{CsMPZ}, let $\beta'\colon W\times W\rightarrow\F_{q}$ be a non-degenerate reflexive sesquilinear form on $W$. Then $\beta'$ can be extended to a non-degenerate reflexive sesquilinear form $\beta\colon \V\times\V\rightarrow\F_{q^n}$. Let $\perp$ and $\perp'$ be the orthogonal complement maps defined by $\beta$ and $\beta'$ on the lattices of $\F_{q^n}$-subspaces of $\V$ and of $\F_q$-subspaces of $W$, respectively.
For an $\F_q$-subspace $S$ of $W$ the $\F_{q^n}$-subspace $\la S \ra_{\F_{q^n}}$ of $\V$ will be denoted by $S^*$. In this case, $(S^*)^{\perp}=(S^{\perp'})^*$.


\begin{definition}
\label{deffff}	
\rm
Let $U$ be a $k$-dimensional $\F_q$-subspace of $V=V(r,q^n)$ such that $k>r$ and $\dim_{\F_q}(M\cap U)<k-1$ for every $(r-1)$-dimensional $\fqn$-subspace $M$ of $V$. Then the $k$-dimensional $\F_q$-subspace $W+\Gamma^{\perp}$ of the quotient space $\V/\Gamma^{\perp}$  will be denoted by $ U^{\perp_{D}}$ and will be called the \emph{Delsarte dual} of $U$ (w.r.t.\ $\perp$).
\end{definition}

The Delsarte duality preserves the property of being scattered w.r.t.\ $\fqn$-subspaces, in the following sense.

\begin{theorem}\cite[Theorem 3.3]{CsMPZ}
\label{thm:dual}
Let $U$ be a $k$-dimensional $h$-scattered $\F_q$-subspace of a vector space $V=V(r,q^n)$ with $n\geq h+3$.  Then $U^{\perp_{D}}$ is an $\frac{rn}{h+1}$-dimensional $(n-h-2)$-scattered $\fq$-subspace of $\V/\Gamma^\perp=V(k-r,q^n)$.
\end{theorem}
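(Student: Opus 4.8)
The plan is to transport everything into the ambient space $\V=V(k,q^n)$ of the Delsarte construction and to reduce the statement to two assertions about the single $\F_q$-subspace $W$. First I note that the hypotheses force $k=\frac{rn}{h+1}$: since the construction requires $k>r$, Theorem \ref{th:bound} gives $k\le\frac{rn}{h+1}$, and by Definition \ref{deffff} the subspace $U^{\perp_D}$ has $\F_q$-dimension $k$, so the claimed value $\frac{rn}{h+1}$ forces equality and $U$ is maximum $h$-scattered. With $\dim_{\F_{q^n}}\Gamma=k-r$ and $\dim_{\F_{q^n}}\Gamma^{\perp}=r$, the quotient $\V/\Gamma^{\perp}$ has $\F_{q^n}$-dimension $k-r$, and a maximum $(n-h-2)$-scattered subspace of it has $\F_q$-dimension $\frac{(k-r)n}{n-h-1}=\frac{rn}{h+1}=k$, matching $\dim_{\F_q}U^{\perp_D}$; the spanning condition $\la U^{\perp_D}\ra_{\F_{q^n}}=\V/\Gamma^{\perp}$ is immediate from $\la W\ra_{\F_{q^n}}=\V$.

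Next I would record two dictionaries obtained from the modular (Dedekind) law. From $\varphi\colon V\to\V/\Gamma$ with $\varphi(U)=(W+\Gamma)/\Gamma$ and $W\cap\Gamma=\{\mathbf 0\}$ I get, for every $\F_{q^n}$-subspace $H$ of $V$,
\[ \dim_{\F_q}(U\cap H)=\dim_{\F_q}\big(W\cap(H+\Gamma)\big), \]
while for $T=R/\Gamma^{\perp}$ with $\Gamma^{\perp}\subseteq R$,
\[ \dim_{\F_q}(U^{\perp_D}\cap T)=\dim_{\F_q}(W\cap R)-\dim_{\F_q}(W\cap\Gamma^{\perp}). \]
A preliminary step is to show $W\cap\Gamma^{\perp}=\{\mathbf 0\}$: for a nonzero $v\in W\cap\Gamma^{\perp}$, the relation $(S^*)^{\perp}=(S^{\perp'})^*$ with $S=\la v\ra_{\F_q}$ gives $\la v\ra_{\F_q}^{\perp'}\subseteq W\cap\la v\ra_{\F_{q^n}}^{\perp}$ and $\la v\ra_{\F_{q^n}}^{\perp}\supseteq\Gamma$, so the hyperplane $Q=\la v\ra_{\F_{q^n}}^{\perp}$ satisfies $\dim_{\F_q}(W\cap Q)\ge k-1$; by the first dictionary this means $\dim_{\F_q}(U\cap M)\ge k-1$ for the hyperplane $M=\varphi^{-1}(Q/\Gamma)$ of $V$, contradicting the hypothesis of Definition \ref{deffff}, which holds because $n\ge h+3$ makes the bound $\frac{rn}{h+1}-n+h$ of Theorem \ref{th:inter} at most $k-3$.

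The core is a no-collapse reformulation: a subspace is $m$-scattered precisely when every $\F_q$-subspace $S$ of it with $\dim_{\F_{q^n}}\la S\ra_{\F_{q^n}}\le m$ is $\F_{q^n}$-independent, i.e.\ $\dim_{\F_q}S=\dim_{\F_{q^n}}\la S\ra_{\F_{q^n}}$. Pulling both conditions back to $\F_q$-subspaces $S\subseteq W$, the $h$-scatteredness of $U$ becomes a constraint controlled by $\dim_{\F_{q^n}}(\la S\ra_{\F_{q^n}}+\Gamma)$ and the desired $(n-h-2)$-scatteredness of $U^{\perp_D}$ becomes the analogous constraint controlled by $\dim_{\F_{q^n}}(\la S\ra_{\F_{q^n}}+\Gamma^{\perp})$. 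The bridge is the exact Delsarte identity $\la S^{\perp'}\ra_{\F_{q^n}}=\la S\ra_{\F_{q^n}}^{\perp}$, which yields $\dim_{\F_{q^n}}(\la S^{\perp'}\ra_{\F_{q^n}}+\Gamma)=k-\dim_{\F_{q^n}}(\la S\ra_{\F_{q^n}}\cap\Gamma^{\perp})$ and thereby turns a $\Gamma^{\perp}$-statement about $S$ into a $\Gamma$-statement about $S^{\perp'}$; applying the $h$-scattered hypothesis to the subspace of $U$ associated with $S^{\perp'}$ is then meant to produce the required independence on the dual side.

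I expect the numerical matching in this last transfer to be the main obstacle. The inclusion $W\cap Q^{\perp}\subseteq(W\cap Q)^{\perp'}$, which holds because $\beta'=\mathrm{Tr}_{q^n/q}\circ\beta$ on $W$ and $Q\perp Q^{\perp}$, gives only the inequality $\dim_{\F_q}(W\cap Q)+\dim_{\F_q}(W\cap Q^{\perp})\le k$, and this is strict in general; upgrading it to the equality needed to pair collapse dimensions on the two sides is exactly where maximality $\dim_{\F_q}U=\frac{rn}{h+1}$ and the hypotheses $r\ge h+1$ and $n\ge h+3$ enter. The case in which the associated subspace of $U$ does not collapse is controlled directly by $r\ge h+1$, whereas the complementary case, where that subspace spans an $\F_{q^n}$-subspace of dimension exceeding $h$, forces me to apply the scattered property to $S$ itself rather than to $S^{\perp'}$ and to combine the two span estimates. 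Carrying this bookkeeping through and verifying that the extremal dimension count leaves no room for a genuine collapse is the delicate heart of the argument.
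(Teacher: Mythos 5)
Your setup is sound as far as it goes: reading the statement as the case $k=\frac{rn}{h+1}$ is the right interpretation (since $\dim_{\F_q}(U^{\perp_D})=k$ by Definition \ref{deffff}), both of your dictionaries are correct, the bridge identity $\la S^{\perp'}\ra_{\F_{q^n}}=\la S\ra_{\F_{q^n}}^{\perp}$ is the right tool, and your argument that $W\cap\Gamma^{\perp}=\{\mathbf{0}\}$ via Theorem \ref{th:inter} is fine. But there is a genuine gap: the core claim --- that every $\F_q$-subspace $S\subseteq W$ whose image in $\V/\Gamma^{\perp}$ spans at most $n-h-2$ dimensions satisfies the no-collapse equality --- is never proved; your last paragraph describes the obstacle rather than overcoming it. And the obstacle is real: if $\dim_{\F_q}S=s$ and the image of $S$ spans $m\leq n-h-2$ dimensions, then by your own bridge identity the subspace of $U$ associated with $S^{\perp'}$ spans an $\F_{q^n}$-subspace of dimension $r-s+m$, which is at most $h$ only when $s-m\geq r-h$; for a minimal collapse ($s=m+1$) this forces $r=h+1$. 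So for general $r$ the $h$-scattered hypothesis simply cannot be applied to $S^{\perp'}$ in the way you propose, and no workable substitute is given for the ``complementary case.''

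The missing idea is to stop applying $h$-scatteredness subspace-by-subspace and instead invoke the hyperplane bound of Theorem \ref{th:inter}; this is exactly how the paper handles this step (the paper cites \cite{CsMPZ} for Theorem \ref{thm:dual} itself, but its self-contained proof of Theorem \ref{th:condhyper}, combined with Theorem \ref{th:inter} and your dimension counts, is precisely a proof of it). Concretely: suppose $M=N/\Gamma^{\perp}$ with $\Gamma^{\perp}\subseteq N$, $\dim_{\F_{q^n}}(N)=n-h-2+r$ and $\dim_{\F_q}(U^{\perp_D}\cap M)\geq n-h-1$, and pick $S\subseteq W\cap N$ with $\dim_{\F_q}(S)=n-h-1$. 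From $S^*\subseteq N$ and $\Gamma^{\perp}\subseteq N$ one gets $N^{\perp}\subseteq (S^*)^{\perp}\cap\Gamma$, hence $\dim_{\F_{q^n}}((S^*)^{\perp}\cap\Gamma)\geq k-(n-h-2+r)$, and a Grassmann count gives $\dim_{\F_{q^n}}\la (S^{\perp'})^*,\Gamma\ra_{\F_{q^n}}\leq k-1$; thus $(S^{\perp'})^*+\Gamma$ lies in a hyperplane $T\supseteq\Gamma$ of $\V$, and your first dictionary produces the hyperplane $\hat{T}=T\cap V$ of $V$ with $\dim_{\F_q}(U\cap\hat{T})\geq\dim_{\F_q}(S^{\perp'})=k-n+h+1$, contradicting the bound $k-n+h$ of Theorem \ref{th:inter}. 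One further small correction: in the Delsarte setting $\beta$ extends $\beta'$, i.e.\ $\beta|_{W\times W}=\beta'$; your relation $\beta'=\mathrm{Tr}_{q^n/q}\circ\beta$ belongs to the ordinary duality of Section \ref{sec:classicalduality}, not to this one.
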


Proposition \ref{prop:property} points out some properties of the Delsarte duality.

\begin{proposition}\label{prop:property}
Let $U$, $W$, $V$, $\Gamma$, $\V$, $\perp$ and $\perp_D$ be defined as above.
The following properties hold:
\begin{itemize}
    \item $(U^{\perp_D})^{\perp_D}=W+\Gamma=\varphi^{-1}(U)$;
    \item under the assumption $n\geq h+3$, $U$ is an $\frac{rn}{h+1}$-dimensional $h$-scattered $\fq$-subspace of $V$ if and only if $U^{\perp_D}$ is an $\frac{rn}{h+1}$-dimensional $(n-h-2)$-scattered $\fq$-subspace of $\V/\Gamma^{\perp}$.
\end{itemize}
\end{proposition}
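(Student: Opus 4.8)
The plan is to establish the two bullet points separately, deriving both from the machinery already set up for the Delsarte duality together with Theorem \ref{thm:dual}.

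For the first bullet, the strategy is to unwind the definition of $U^{\perp_D}$ twice and track the ambient spaces carefully. By Definition \ref{deffff}, $U^{\perp_D}=W+\Gamma^{\perp}$, viewed inside the quotient $\V/\Gamma^{\perp}$. To compute $(U^{\perp_D})^{\perp_D}$ I would reapply the construction: the role of $\Gamma$ in the second application is played by $\Gamma^{\perp}$ (whose $\fqn$-span is $\Gamma^{\perp}$ itself), and the role of the $\fq$-subspace $W$ is again played by $W$, since $W$ sits in $\V$ with $\langle W\rangle_{\fqn}=\V$ and meets both $\Gamma$ and $\Gamma^{\perp}$ trivially. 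Taking $\perp$ again sends $\Gamma^{\perp}$ back to $(\Gamma^{\perp})^{\perp}=\Gamma$, so the second Delsarte dual is $W+(\Gamma^{\perp})^{\perp}=W+\Gamma$ inside $\V/\Gamma$. Finally, the isomorphism $\varphi\colon V\to\V/\Gamma$ of Section \ref{sec:Delsarteduality} satisfies $\varphi(U)=W+\Gamma$, so $\varphi^{-1}(W+\Gamma)=U$, giving $(U^{\perp_D})^{\perp_D}=W+\Gamma=\varphi^{-1}(U)$ as claimed. The one point requiring care is checking that the hypotheses of Definition \ref{deffff} (the dimension condition $\dim_{\fq}(M\cap U^{\perp_D})<k-1$ for every hyperplane $M$) hold for $U^{\perp_D}$ so that its Delsarte dual is defined; this is exactly where one invokes that $U^{\perp_D}$ is scattered w.r.t.\ $\fqn$-subspaces (hence meets no hyperplane too deeply), as guaranteed by Theorem \ref{thm:dual}.

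For the second bullet, the forward implication is immediate from Theorem \ref{thm:dual}: if $U$ is an $\frac{rn}{h+1}$-dimensional $h$-scattered subspace with $n\geq h+3$, then $U^{\perp_D}$ is an $\frac{rn}{h+1}$-dimensional $(n-h-2)$-scattered subspace of $\V/\Gamma^{\perp}$. For the converse, the idea is to apply Theorem \ref{thm:dual} to $U^{\perp_D}$ itself. Writing $h':=n-h-2$, the hypothesis $n\geq h+3$ is equivalent to $h'\geq 1$, and symmetrically one checks $n\geq h'+3$ iff $h\geq 1$, so both subspaces lie in the regime where Theorem \ref{thm:dual} applies. Thus if $U^{\perp_D}$ is $\frac{rn}{h+1}$-dimensional and $h'$-scattered, Theorem \ref{thm:dual} yields that $(U^{\perp_D})^{\perp_D}$ is an $\frac{rn}{h+1}$-dimensional $(n-h'-2)$-scattered subspace. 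Since $n-h'-2=n-(n-h-2)-2=h$, and since by the first bullet $(U^{\perp_D})^{\perp_D}=\varphi^{-1}(U)$ is $\fqn$-isomorphic to $U$, being $h$-scattered is preserved under this isomorphism; hence $U$ is $\frac{rn}{h+1}$-dimensional and $h$-scattered.

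The main obstacle I anticipate is the bookkeeping of ambient spaces and the verification that the defining conditions of the Delsarte dual are met at each stage, rather than any deep computation. In particular, one must be sure that applying $\perp_D$ to $U^{\perp_D}$ genuinely reproduces the same embedding data (the decomposition $\V=V\oplus\Gamma$ with $\Gamma$ replaced by $\Gamma^{\perp}$), and that the dimension bound in Definition \ref{deffff} transfers; both follow cleanly once the scatteredness of $U^{\perp_D}$ is in hand via Theorem \ref{thm:dual}. The symmetry $h\leftrightarrow n-h-2$ together with the involutory nature of $\perp_D$ established in the first bullet then makes the converse essentially automatic.
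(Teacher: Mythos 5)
Your proposal is correct and follows essentially the same route as the paper, whose proof likewise obtains the first bullet directly from the definition of the Delsarte duality (via $(\Gamma^{\perp})^{\perp}=\Gamma$) and the second bullet by applying Theorem \ref{thm:dual} to $U^{\perp_D}$ and transporting the conclusion back through the first bullet and the $\fqn$-isomorphism $\varphi$. One minor caveat: your use of Theorem \ref{thm:dual} to verify the hypotheses of Definition \ref{deffff} for $U^{\perp_D}$ presupposes that $U$ is $h$-scattered, which the first bullet does not assume---but since the paper itself treats that identity as purely formal, this does not affect the substance of your argument.
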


\begin{proof}
The first property easily follows from the definition of Delsarte duality.
Together with Theorem \ref{thm:dual} applied to $U^{\perp_D}$, this yields the second property.
\end{proof}

\subsection{Generalities on codes}\label{sec:codes}

In this section we recall some properties of codes that will be used in the paper. In Section \ref{sec:Hamming} we consider $\fq$-linear codes with respect to the Hamming metric in $\F_{q}^N$, while in Section \ref{sec:rank} we consider $\fq$-linear codes with respect to the rank metric in $\F_{q}^{m\times n}$.

\subsubsection{Projective systems and linear codes}\label{sec:Hamming}

Let $\mathcal{C}\subseteq\F_q^N$ be an $\fq$-linear code of length $N$, dimension $k$ and minimum distance $d$ over the alphabet $\fq$; we denote by $[N,k,d]_q$ the parameters of $\mathcal{C}$.
A generator matrix of $\mathcal{C}$ is a matrix $G\in\F_q^{k\times N}$ whose rows form a basis of $\mathcal{C}$.
The weight of a codeword $\mathbf{c}\in\mathcal{C}$ is the number of nonzero components of $\mathbf{c}$, and $A_i^H$ will denote the number of codewords of weight $i$ in $\mathcal{C}$.
The $N$-tuple $(A_0^H=1,A_1^H,\ldots,A_N^H)$ is called the weight distribution of $\mathcal{C}$, and the polynomial $\sum_{i=0}^{N}A_i^H z^i$ is the weight enumerator of $\mathcal{C}$.

A projective $[N,k,d]_q$-system is a point subset $\mathcal{P}$ of $\Omega=\PG(k-1,q)$ of size $N$, not contained in any hyperplane of $\Omega$, such that
\[d= N - \max\{|\mathcal{P}\cap\mathcal{H}|\colon \mathcal{H}\,\mbox{ is a hyperlane of }\,\Omega\}.  \]
The matrix $G\in\F_q^{k\times N}$ whose columns are the coordinates of the points of a projective $[N,k,d]_q$-system $\mathcal{P}$ is the generator matrix of a linear code with parameters $[N,k,d]_q$.
Different choices of the coordinates yield linear codes which are equivalent by means of a diagonal matrix; we denote one of them by $\mathcal{C}_{\mathcal{P}}$.

\begin{proposition}\label{prop:projsyst}
Let $\mathcal{P}$ be a projective $[N,k,d]_q$-system of $\Omega$ and $\mathcal{C}_{\mathcal{P}}$ be a corresponding linear $[N,k,d]_q$-code.
Then the weights of $\mathcal{C}_{\mathcal{P}}$ are the values $N-i$, where $i=|\mathcal{P}\cap\mathcal{H}|$ and $\mathcal{H}$ runs over the hyperplanes of $\Omega$.
The number $A_i^H$ of codewords of $\mathcal{C}_{\mathcal{P}}$ with weight $i$ is equal to the number of hyperplanes $\mathcal{H}$ of $\Omega$ such that $|\mathcal{P}\cap\mathcal{H}|=i$. 
\end{proposition}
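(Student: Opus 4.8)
The plan is to realise each codeword of $\mathcal{C}_{\mathcal{P}}$ as an evaluation against the points of $\mathcal{P}$, so that its weight is controlled by how many of those points lie on a prescribed hyperplane of $\Omega$. Concretely, I would fix a generator matrix $G\in\F_q^{k\times N}$ of $\mathcal{C}_{\mathcal{P}}$ whose $j$-th column $\mathbf{g}_j$ is a coordinate vector of the $j$-th point $P_j$ of $\mathcal{P}$, as in the definition preceding the statement. Since $\mathcal{P}$ spans $\Omega$, the matrix $G$ has rank $k$, so $\mathbf{x}\mapsto\mathbf{x}G$ is an $\F_q$-linear isomorphism from $\F_q^k$ onto $\mathcal{C}_{\mathcal{P}}$. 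Replacing each $\mathbf{g}_j$ by a nonzero scalar multiple multiplies the $j$-th coordinate of every codeword by a constant and hence preserves all weights; this is the diagonal equivalence recalled above, and it makes the weight distribution independent of the chosen representatives.

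Next I would compute the weight of a codeword $\mathbf{c}=\mathbf{x}G$. Its $j$-th entry is $\mathbf{x}\cdot\mathbf{g}_j$, which vanishes exactly when $P_j$ lies on the hyperplane $\mathcal{H}_{\mathbf{x}}=\{[\mathbf{y}]\in\Omega\colon \mathbf{x}\cdot\mathbf{y}=0\}$. Thus the number of zero coordinates of $\mathbf{c}$ is $|\mathcal{P}\cap\mathcal{H}_{\mathbf{x}}|$, and the weight of $\mathbf{c}$ equals $N-|\mathcal{P}\cap\mathcal{H}_{\mathbf{x}}|$. As $\mathbf{x}$ runs over the nonzero vectors of $\F_q^k$, the hyperplane $\mathcal{H}_{\mathbf{x}}$ runs over all hyperplanes of $\Omega$, which at once gives the first assertion: the weights of $\mathcal{C}_{\mathcal{P}}$ are the values $N-i$, where $i=|\mathcal{P}\cap\mathcal{H}|$ and $\mathcal{H}$ ranges over the hyperplanes of $\Omega$.

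For the multiplicities I would count the fibres of $\mathbf{x}\mapsto\mathcal{H}_{\mathbf{x}}$. Two nonzero vectors define the same hyperplane precisely when they are scalar multiples, so this map is $(q-1)$-to-$1$, and the $q-1$ codewords $\lambda\mathbf{x}G$ with $\lambda\in\F_q^*$ share the weight $N-|\mathcal{P}\cap\mathcal{H}_{\mathbf{x}}|$. Hence the number of codewords of weight $N-i$ equals $q-1$ times the number of hyperplanes $\mathcal{H}$ with $|\mathcal{P}\cap\mathcal{H}|=i$, which is the counting statement for $A^H$. There is no serious obstacle here; the only point that demands care is this scalar bookkeeping — keeping track of the $q-1$ nonzero multiples that collapse to a single hyperplane, and being consistent about indexing the count by the weight $N-i$ rather than by the intersection number $i$.
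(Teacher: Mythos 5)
Your argument is correct and complete, and it is the standard one: realize codewords as $\mathbf{x}\mapsto\mathbf{x}G$, identify the zero coordinates of $\mathbf{x}G$ with the points of $\mathcal{P}$ lying on the hyperplane $\mathcal{H}_{\mathbf{x}}$, and count the $(q-1)$-to-$1$ fibres of $\mathbf{x}\mapsto\mathcal{H}_{\mathbf{x}}$. There is nothing in the paper to compare it with: Proposition \ref{prop:projsyst} is recalled as a known preliminary fact and no proof is given there.

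The one point you should not gloss over is your closing claim that your count ``is the counting statement for $A^H$''. Taken literally, it is not. What you prove --- correctly --- is
\[
A_{N-i}^H=(q-1)\cdot\#\bigl\{\mathcal{H}\ \text{hyperplane of }\Omega\colon |\mathcal{P}\cap\mathcal{H}|=i\bigr\},
\]
whereas the proposition as printed asserts that the number of codewords of weight $i$ equals the number of hyperplanes meeting $\mathcal{P}$ in exactly $i$ points: the factor $q-1$ is absent, and the same index $i$ is used for both the weight and the intersection size. Your version is the right one (you flag the indexing issue yourself, but not the scalar factor): a sanity check is that summing the printed multiplicities over all weights would give $1+\frac{q^k-1}{q-1}$ codewords in total rather than $q^k$, so for $q>2$ the statement cannot hold as written. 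In other words, your proof establishes a corrected form of the proposition, and you should say so explicitly instead of asserting agreement. The same correction propagates to where the proposition is used in Section \ref{sec:h+1weights}: since the alphabet there is $\F_{q^n}$, the weight distribution of $\mathcal{C}_{L_U}$ should carry a factor $q^n-1$, i.e.\ $A_{w_i}^H=(q^n-1)\,t_i$ rather than $t_i$.
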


\subsubsection{Rank metric codes}\label{sec:rank}

Rank metric codes were introduced by Delsarte \cite{Delsarte} in 1978 and they have been intensively investigated in recent years because of their applications; we refer to \cite{sheekey_newest_preprint} for a survey on this topic.
The set $\fq^{m\times n}$ of $m \times n$ matrices over $\fq$ may be endowed with a metric, called \emph{rank metric}, defined by
\[d(A,B) = \mathrm{rk}\,(A-B).\]
A subset $\C \subseteq \fq^{m\times n}$ equipped with the rank metric is called a \emph{rank metric code} (shortly, an \emph{RM code}).
The minimum distance of $\C$ is defined as
\[d = \min\{ d(A,B) \colon A,B \in \C,\,\, A\neq B \}.\]
Denote the parameters of an RM code $\C\subseteq\fq^{m\times n}$ with minimum distance $d$ by $(m,n,q;d)$.
We are interested in $\fq$-\emph{linear} RM codes, i.e. $\fq$-subspaces of $\fq^{m\times n}$.
Delsarte showed in \cite{Delsarte} that the parameters of these codes must obey a Singleton-like bound.

\begin{theorem}\label{th:Singleton}
If $\C$ is an RM code of $\F_q^{m\times n}$ with minimum distance $d$, then
\[ |\C| \leq q^{\max\{m,n\}(\min\{m,n\}-d+1)}. \]
\end{theorem}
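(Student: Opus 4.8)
The plan is to establish this Singleton-type bound by a puncturing argument, in the spirit of the Hamming-metric bound but adapted to the rank metric. Since transposing a matrix preserves its rank, the map $A\mapsto A^{\top}$ is an isometry of the rank metric that carries $\F_q^{m\times n}$ onto $\F_q^{n\times m}$ and sends $\C$ to a code of the same cardinality and the same minimum distance. As the claimed bound is symmetric in $m$ and $n$, I may therefore assume without loss of generality that $m\leq n$, so that $\min\{m,n\}=m$ and $\max\{m,n\}=n$; the goal becomes $|\C|\leq q^{n(m-d+1)}$. Note also that $d\leq m$, since no nonzero matrix in $\F_q^{m\times n}$ has rank exceeding $m$, so the exponent $m-d+1$ is a nonnegative integer.

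The key idea is to delete rows. Consider the $\fq$-linear projection
\[
\pi\colon \F_q^{m\times n}\longrightarrow \F_q^{(m-d+1)\times n}
\]
that forgets the last $d-1$ rows of a matrix. I claim that the restriction $\pi|_{\C}$ is injective. Indeed, suppose $A,B\in\C$ satisfy $\pi(A)=\pi(B)$. Then $A-B$ has all of its nonzero entries confined to its last $d-1$ rows, so its row space is spanned by at most $d-1$ vectors, whence $\mathrm{rk}(A-B)\leq d-1<d$. By the definition of the minimum distance of $\C$ this forces $A=B$, proving injectivity.

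Since $\pi|_{\C}$ is injective, $|\C|$ cannot exceed the size of the target space:
\[
|\C|\leq \bigl|\F_q^{(m-d+1)\times n}\bigr| = q^{(m-d+1)n}=q^{\max\{m,n\}(\min\{m,n\}-d+1)},
\]
which is the desired inequality. There is no genuine obstacle here: the only point requiring care is the initial reduction to the case $m\leq n$, which fixes the correct axis along which to puncture so that the asymmetric exponent $\max\{m,n\}(\min\{m,n\}-d+1)$ emerges; projecting along the wrong axis would yield the weaker (and, for $m\neq n$, incorrect) exponent $\min\{m,n\}(\max\{m,n\}-d+1)$. The elementary fact that a matrix supported on $s$ rows has rank at most $s$ is precisely what makes row-puncturing compatible with the rank metric.
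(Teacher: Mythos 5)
Your proof is correct. Note, however, that the paper does not prove this statement at all: it is quoted from Delsarte's 1978 paper, and the machinery Delsarte used there (the same association-scheme framework that the paper later invokes for the MacWilliams identities, Theorem \ref{th:MacWilliams}) is quite different from your argument. What you give is the standard elementary puncturing proof: transpose to reduce to $m\leq n$, delete $d-1$ rows, and observe that the projection is injective on $\C$ because a matrix supported on $d-1$ rows has rank at most $d-1$. This route is self-contained, requires no duality or combinatorial machinery, and applies verbatim to arbitrary (not necessarily $\fq$-linear) codes, which matches the generality of the statement as the paper phrases it. The one point you rightly flag --- puncturing along the short axis rather than the long one --- is exactly where the asymmetric exponent $\max\{m,n\}(\min\{m,n\}-d+1)$ comes from, and your transposition step handles it cleanly since transposition is a rank isometry and the bound is symmetric in $m$ and $n$. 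The only pedantic caveat is that the minimum distance is defined only when $|\C|\geq 2$, but that degenerate case satisfies the bound trivially.
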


When equality holds, we call $\C$ a \emph{maximum rank distance}\index{maximum rank distance} (\emph{MRD} for short) code.
Examples of MRD codes are resumed in \cite{PZ,sheekey_newest_preprint}, see also the paper \cite{SheekeyLondon}.

For an RM code $\cC\subseteq \F_{q}^{m \times n}$, the \emph{adjoint code} of $\C$ is
\[ \C^\top =\{C^t \colon C \in \C\}, \]
where $C^t$ is the transpose matrix of $C$.
Define the symmetric bilinear form $\langle\cdot,\cdot\rangle$ on $\F_q^{m \times n}$ by
\[ \langle M,N \rangle= \mathrm{Tr}(MN^t). \]
The \emph{Delsarte dual code} of an $\F_q$-linear RM code $\C\subseteq \F_{q}^{m \times n}$ is
\[ \C^\perp = \{ N \in \F_q^{m\times n} \colon \langle M,N \rangle=0 \; \text{for each} \; M \in \C \}. \]

\begin{remark}\label{rk:dualMRD}
If $\mathcal{C}\subseteq \F_{q}^{m \times n}$ is an MRD code with minimum distance $d$, then $\C^\top$ and $\mathcal{C^\perp}$ are MRD codes with minimum distances $d$ and $\min\{m,n\}-d+2$, respectively; see \cite{Delsarte,Ravagnani}.
\end{remark}

Given an RM code $\mathcal{C}$ in $\mathbb{F}_{q}^{m\times n}$ and an integer $i \in \mathbb{N}$, define $A_i=|\{M \in \mathcal{C} \colon \mathrm{rk}(M)=i\}|$. The \emph{rank distribution} of $\mathcal{C}$ is the vector $(A_i)_{i \in \mathbb{N}}$.
MacWilliams identities for RM codes are stated in Theorem \ref{th:MacWilliams} and were first obtained by Delsarte in \cite{Delsarte} using the machinery of association schemes; see also \cite{Ravagnani} for a different approach.
Recall that the $q$-binomial coefficient of two integers $s$ and $t$ is
\[  {s \brack t}_q=\left\{ \begin{array}{lll} 0 & \text{if}\,\, s<0,\,\,\text{or}\,\,t<0,\,\, \text{or}\,\, t>s,\\ 
1 & \text{if}\,\, t=0\,\, \text{and}\,\, s\geq 0,\\
\displaystyle\prod_{i=1}^t \frac{q^{s-i+1}-1}{q^i-1} & \text{otherwise}. \end{array} \right.  \]
\begin{theorem}(\cite[Theorem 3.3]{Delsarte},\cite[Theorem 31]{Ravagnani})\label{th:MacWilliams}
Let $\mathcal{C}$ be an RM code in $\mathbb{F}_q^{m\times n}$. Let $(A_i)_{i\in \mathbb{N}}$ and $(B_j)_{j\in \mathbb{N}}$ be the rank distribution of $\mathcal{C}$ and $\mathcal{C}^\perp$, respectively. For any integer $\nu \in \{ 0,\ldots,m \}$ we have
\[ \sum_{i=0}^{m-\nu} A_i  {m-i \brack \nu}_q = \frac{|\mathcal{C}|}{q^{n\nu}} \sum_{j=0}^\nu B_j {m-j \brack \nu -j}_q.\]
\end{theorem}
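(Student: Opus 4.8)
The plan is to prove the identity by a double-counting argument, interpreting both $q$-binomial coefficients as counts of subspaces of $\F_q^m$ and passing from $\mathcal{C}$ to $\mathcal{C}^\perp$ through the elementary duality of linear subspaces under the trace form $\langle M,N\rangle=\mathrm{Tr}(MN^t)$. Throughout I would regard a matrix $M\in\F_q^{m\times n}$ as a linear map $\F_q^n\to\F_q^m$, so that its rank equals the dimension of its column space and its left null space $\{v\in\F_q^m : v^tM=0\}$ has dimension $m-\mathrm{rk}(M)$.

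First I would rewrite the left-hand side as a count of incidences. Since $\qbin{m-i}{\nu}{q}$ is the number of $\nu$-dimensional subspaces of an $(m-i)$-dimensional space, the quantity $\sum_i A_i\qbin{m-i}{\nu}{q}$ equals the number of pairs $(M,U)$ with $M\in\mathcal{C}$ and $U$ a $\nu$-dimensional subspace of $\F_q^m$ contained in the left null space of $M$. For a fixed $U$, the matrices annihilated on the left by $U$ are exactly those whose columns lie in $U^{\perp}$ (the orthogonal complement in $\F_q^m$ for the standard form); they form an $\F_q$-subspace $\mathcal{M}_U\subseteq\F_q^{m\times n}$ of dimension $(m-\nu)n$. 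Hence the left-hand side equals $\sum_{U}|\mathcal{C}\cap\mathcal{M}_U|$, the sum running over all $\nu$-dimensional $U\le\F_q^m$.

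The central step is to transport each intersection to the dual code. A direct computation with the trace form shows that $\mathcal{M}_U^{\perp}=\mathcal{N}_U$, the subspace of matrices whose columns lie in $U$, of dimension $\nu n$. The general identity $\dim(\mathcal{C}\cap\mathcal{D})=\dim\mathcal{C}-\dim\mathcal{D}^{\perp}+\dim(\mathcal{C}^\perp\cap\mathcal{D}^\perp)$, valid for subspaces under a nondegenerate form, then gives $|\mathcal{C}\cap\mathcal{M}_U|=\frac{|\mathcal{C}|}{q^{\nu n}}\,|\mathcal{C}^\perp\cap\mathcal{N}_U|$. Summing over $U$ and exchanging the order of summation, I would count pairs $(Y,U)$ with $Y\in\mathcal{C}^\perp$ and $\mathrm{colspace}(Y)\subseteq U$: for $Y$ of rank $j$ the number of admissible $U$ is the number of $\nu$-dimensional subspaces containing the fixed $j$-dimensional column space, namely $\qbin{m-j}{\nu-j}{q}$. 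This yields $\sum_U|\mathcal{C}^\perp\cap\mathcal{N}_U|=\sum_j B_j\qbin{m-j}{\nu-j}{q}$, and combining the two counts produces the asserted identity; the ranges $0\le i\le m-\nu$ and $0\le j\le\nu$ are automatic, since the relevant $q$-binomials vanish outside them.

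The main obstacle, modest but essential, is the duality step: one must verify carefully that $\mathcal{M}_U^{\perp}=\mathcal{N}_U$ with respect to $\langle M,N\rangle=\mathrm{Tr}(MN^t)$ and track the exact power of $q$, so that the factor $q^{-\nu n}$ emerges correctly and matches the asymmetry between $m$ and $n$ in the statement. Everything else is bookkeeping with the two incidence counts and the standard dimension formula for intersections of subspaces of $\F_q^{m\times n}$.
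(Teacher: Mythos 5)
The paper does not actually prove this statement: it is quoted as a known result, with the proof deferred to Delsarte's association-scheme machinery and to Ravagnani's counting proof (\cite[Theorem 31]{Ravagnani}). So the only meaningful comparison is against those sources, and against Ravagnani in particular your argument is essentially the same proof, reconstructed correctly. Your three steps all check out: (i) the left-hand side counts pairs $(M,U)$ with $M\in\mathcal{C}$, $\dim_{\F_q}U=\nu$ and $U$ inside the left null space of $M$, since a matrix of rank $i$ has left null space of dimension $m-i$; (ii) the duality step is sound, because $\mathrm{Tr}(MN^t)=\sum_{l}M_{\cdot l}\cdot N_{\cdot l}$ is the sum over columns of standard dot products, so matrices with columns in $U^{\perp}$ and matrices with columns in $U$ pair to zero, and the dimension count $(m-\nu)n+\nu n=mn$ forces $\mathcal{M}_U^{\perp}=\mathcal{N}_U$; combined with $\dim(\mathcal{C}\cap\mathcal{D})=\dim\mathcal{C}-\dim\mathcal{D}^{\perp}+\dim(\mathcal{C}^{\perp}\cap\mathcal{D}^{\perp})$, which follows from $(\mathcal{C}\cap\mathcal{D})^{\perp}=\mathcal{C}^{\perp}+\mathcal{D}^{\perp}$ and nondegeneracy of the trace form, this gives exactly $|\mathcal{C}\cap\mathcal{M}_U|=\frac{|\mathcal{C}|}{q^{\nu n}}|\mathcal{C}^{\perp}\cap\mathcal{N}_U|$ (this intersection identity is precisely Ravagnani's key lemma on shortened codes); (iii) the reverse count of pairs $(Y,U)$ with $\mathrm{colspace}(Y)\subseteq U$ correctly yields ${m-j\brack \nu-j}_q$ choices of $U$ for each $Y$ of rank $j$, and the stated summation ranges are indeed automatic from the vanishing conventions for $q$-binomials. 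The one hypothesis you use silently is that $\mathcal{C}$ is $\F_q$-linear (needed both to define $\mathcal{C}^{\perp}$ and to apply the dimension formula); this is implicit in the paper's statement as well, so it is not a gap. Your proof is complete and, being purely a double count plus subspace duality, is the elementary route — avoiding Delsarte's association schemes entirely.
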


As a consequence, Delsarte in \cite{Delsarte} and later Gabidulin in \cite{Gabidulin} determined precisely the weight distribution of MRD codes.

\begin{theorem}\label{th:weightdistribution}
Let $\mathcal{C}$ be an MRD code in $\mathbb{F}_q^{m\times n}$ with minimum distance $d$. Let $m'=\min\{m,n\}$ and $n'=\max\{m,n\}$.
Then
\[ A_{d+\ell}={m'\brack d+\ell}_q \sum_{t=0}^\ell (-1)^{t-\ell}{\ell+d \brack \ell-t}_q q^{\binom{\ell-t}{2}}(q^{n'(t+1)}-1) \]
for any $\ell \in \{0,1,\ldots,n'-d\}$.
\end{theorem}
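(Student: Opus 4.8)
The plan is to read off the rank distribution directly from the MacWilliams identities of Theorem~\ref{th:MacWilliams}, using crucially that both $\mathcal{C}$ and its Delsarte dual are MRD. Since transposing a code preserves all ranks while exchanging $m$ and $n$ and leaving $m'$, $n'$ unchanged, I may assume $m\le n$, so that $m'=m$, $n'=n$ and the identity of Theorem~\ref{th:MacWilliams} applies verbatim. Being MRD with minimum distance $d$, the code $\mathcal{C}$ satisfies $A_0=1$, $A_i=0$ for $1\le i\le d-1$ and $|\mathcal{C}|=q^{n(m-d+1)}$; by Remark~\ref{rk:dualMRD} the dual $\mathcal{C}^\perp$ is MRD with minimum distance $m-d+2$, so its rank distribution obeys $B_0=1$ and $B_j=0$ for $1\le j\le m-d+1$.

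Next I would feed these vanishing conditions into the MacWilliams identity. For each $\nu\in\{0,1,\dots,m-d+1\}$ only the term $j=0$ survives on the right, and after moving the $A_0$-contribution across, the identity reduces to
\[ \sum_{i=d}^{m-\nu} A_i {m-i \brack \nu}_q = \bigl(q^{n(m-d+1-\nu)}-1\bigr){m \brack \nu}_q. \]
Writing $s=m-d$, $j=m-i$ and $g(j)=A_{m-j}$, the left-hand side becomes $\sum_{j=\nu}^{s} g(j){j \brack \nu}_q$, so for $\nu=0,\dots,s$ one obtains a triangular, hence invertible, linear system; in particular the rank distribution of any MRD code is determined by $m,n,q,d$ alone. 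The value $\nu=m-d+1$ yields a tautology and is discarded.

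To solve the system I would apply $q$-binomial inversion, i.e.\ the orthogonality relation
\[ \sum_{\nu=k}^{j}(-1)^{\nu-k}q^{\binom{\nu-k}{2}}{\nu \brack k}_q{j \brack \nu}_q=\delta_{j,k}, \]
itself a consequence of the $q$-binomial theorem $\sum_{p}(-1)^p q^{\binom{p}{2}}{N \brack p}_q=\prod_{i=0}^{N-1}(1-q^i)$, which vanishes unless $N=0$. This gives $g(k)=\sum_{\nu=k}^{s}(-1)^{\nu-k}q^{\binom{\nu-k}{2}}{\nu \brack k}_q\bigl(q^{n(s+1-\nu)}-1\bigr){m \brack \nu}_q$. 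Taking $k=s-\ell$, so that $g(k)=A_{d+\ell}$, and substituting $\nu=s-t$, turns this into a sum over $t\in\{0,\dots,\ell\}$ carrying the factor $q^{n(t+1)}-1$, the sign $(-1)^{t-\ell}$, the power $q^{\binom{\ell-t}{2}}$, and the product ${m \brack s-t}_q{s-t \brack \ell-t}_q$ of Gaussian coefficients.

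It then remains to recognize this product as the one in the statement. Using the symmetry ${a \brack b}_q={a \brack a-b}_q$ to rewrite ${m \brack s-t}_q={m \brack d+t}_q$, and the flag identity ${m \brack c}_q{m-c \brack e}_q={m \brack c+e}_q{c+e \brack c}_q$ with $c=d+t$ and $e=\ell-t$, one gets ${m \brack s-t}_q{s-t \brack \ell-t}_q={m \brack d+\ell}_q{\ell+d \brack \ell-t}_q$, which extracts the common factor ${m' \brack d+\ell}_q$ and produces exactly the claimed expression. The only genuinely delicate point is the inversion step together with the ensuing re-indexing: one must track the signs, the quadratic $q$-exponents and the summation bounds with care, and check the single Gaussian-binomial identity above; everything else is bookkeeping. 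A sanity check at $\ell=0$, where the formula collapses to $A_d=(q^{n}-1){m \brack d}_q$, confirms the normalization.
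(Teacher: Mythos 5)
Your proof is correct, and there is in fact no proof in the paper to compare it against: Theorem \ref{th:weightdistribution} is quoted as a known result of Delsarte and Gabidulin, with no argument given. Your derivation is essentially the classical one from that cited literature: reduce to $m\le n$ by transposition (which preserves ranks, $m'$ and $n'$), combine the MacWilliams identities of Theorem \ref{th:MacWilliams} with the fact that $\mathcal{C}^\perp$ is MRD with minimum distance $m-d+2$ (Remark \ref{rk:dualMRD}) to obtain
\[ \sum_{i=d}^{m-\nu} A_i {m-i \brack \nu}_q = \bigl(q^{n(m-d+1-\nu)}-1\bigr){m \brack \nu}_q, \qquad 0\le \nu\le m-d, \]
and then solve this triangular system by $q$-binomial inversion. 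Note that this intermediate identity is precisely the paper's Theorem \ref{th:dualrelations}, which the paper itself derives from the same two ingredients you use; so your first step could simply quote that theorem. The orthogonality relation, the re-indexing $k=s-\ell$, $\nu=s-t$, and the two Gaussian-coefficient manipulations (${m \brack s-t}_q={m \brack d+t}_q$ and the flag identity) all check out, as does the normalization $A_d=(q^{n'}-1){m' \brack d}_q$.

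Two minor points should be tidied up. First, your inversion yields the formula only for $\ell\in\{0,\ldots,m'-d\}$, while the statement asserts it for $\ell$ up to $n'-d$. For $m'-d<\ell\le n'-d$ both sides vanish trivially: no matrix in $\F_q^{m\times n}$ has rank exceeding $m'$, and ${m' \brack d+\ell}_q=0$ by the convention recalled in the paper; one sentence closes this, but it is currently missing. Second, your argument uses the $\F_q$-linearity of $\mathcal{C}$ (both the definition of $\mathcal{C}^\perp$ and Remark \ref{rk:dualMRD} require it). This is consistent with the paper's standing framework, but Delsarte's original association-scheme proof gives the same rank distribution for arbitrary, not necessarily linear, MRD codes, so the theorem as literally stated is slightly more general than what your argument establishes.
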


In particular, Lemma \ref{lemma:weight} holds.

\begin{lemma}(\cite[Lemma 2.1]{LTZ2},\label{lemma:weight}\cite[Lemma 52]{Ravagnani})\label{lemma:complete weight}
  Let $\mathcal{C}$ be an MRD code in $\mathbb{F}_q^{m\times n}$ with minimum distance $d$. Let $m'=\min\{m,n\}$ and $n'=\max\{m,n\}$. 
  Assume that the null matrix $O$ is in $\mathcal{C}$.
  Then, for any $0 \leq \ell \leq m'-d$, we have $A_{d+\ell}>0$, i.e. there
  exists at least one matrix $C \in \mathcal{C}$ such that $\mathrm{rk} (C) = d + \ell$.
\end{lemma}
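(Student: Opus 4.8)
The plan is to derive the strict inequality $A_{d+\ell}>0$ directly from the closed formula for the weight distribution of an MRD code in Theorem~\ref{th:weightdistribution}, which applies here since $O\in\mathcal{C}$. Setting $M=d+\ell$ and reindexing the sum in that formula by $j=\ell-t$, one obtains
\begin{equation*}
A_{d+\ell}={m'\brack d+\ell}_q\sum_{j=0}^{\ell}(-1)^{j}{M\brack j}_q q^{\binom{j}{2}}\bigl(q^{n'(\ell+1-j)}-1\bigr).
\end{equation*}
Since $0\leq d+\ell\leq m'$, the leading factor ${m'\brack d+\ell}_q$ is a positive integer, so it suffices to prove that the alternating sum
\[
T_\ell=\sum_{j=0}^{\ell}(-1)^{j}a_j,\qquad a_j={M\brack j}_q q^{\binom{j}{2}}\bigl(q^{n'(\ell+1-j)}-1\bigr),
\]
is strictly positive.

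Each $a_j$ is strictly positive for $0\leq j\leq\ell$, because $\ell+1-j\geq1$ forces $q^{n'(\ell+1-j)}-1>0$ and the $q$-binomials are positive. The key step is then to show that the sequence $(a_j)_{0\le j\le \ell}$ is strictly decreasing: if $a_0>a_1>\cdots>a_\ell>0$, then pairing consecutive terms as $(a_0-a_1)+(a_2-a_3)+\cdots$ shows at once that $T_\ell>0$, and we are done. Using the Pascal-type ratio ${M\brack j+1}_q/{M\brack j}_q=(q^{M-j}-1)/(q^{j+1}-1)$, the inequality $a_{j+1}<a_j$ reduces, after cancellation, to
\[
\frac{q^{M-j}-1}{q^{j+1}-1}\,q^{j}\,\bigl(q^{n'(\ell-j)}-1\bigr)<q^{n'(\ell+1-j)}-1,
\]
which I would establish through the crude bounds $q^{j+1}-1\geq q^{j}(q-1)$ and $q^{n'(\ell-j)}-1<q^{n'(\ell-j)}$, reducing everything to a comparison of exponents governed by the constraint $M-j\leq M\leq m'\leq n'$.

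The main obstacle is the boundary case $j=0$ together with $M=m'=n'$, that is, the count of full-rank codewords in the square case, where the exponent comparison above degenerates into an equality and the crude estimate no longer separates the two sides. Here I would instead factor $q^{n'(\ell+1)}-1=(q^{n'}-1)\sum_{i=0}^{\ell}q^{n'i}$ and compare it termwise with ${n'\brack1}_q(q^{n'\ell}-1)=\frac{q^{n'}-1}{q-1}(q^{n'\ell}-1)$, so that the desired inequality becomes $\frac{q^{n'\ell}-1}{q-1}<\sum_{i=0}^{\ell}q^{n'i}$, which is immediate from $q\geq2$. Once the decreasing property is verified both on this boundary case and on the generic range, the positivity of $T_\ell$, and hence of $A_{d+\ell}$, follows.
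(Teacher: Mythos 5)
Your proof is correct, but note that there is no internal argument in the paper to compare it against: Lemma \ref{lemma:complete weight} is imported directly from \cite[Lemma 2.1]{LTZ2} and \cite[Lemma 52]{Ravagnani}, with no proof given. Your route instead makes the lemma a corollary of Theorem \ref{th:weightdistribution}, which the paper states immediately before it. The reindexing $j=\ell-t$ is right, the ratio ${M \brack j+1}_q\big/{M \brack j}_q=(q^{M-j}-1)/(q^{j+1}-1)$ (with $M=d+\ell$) is right, and the strict decrease $a_0>a_1>\cdots>a_\ell>0$, which together with the pairing $(a_0-a_1)+(a_2-a_3)+\cdots$ gives positivity of the alternating sum, does hold on the whole range. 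One step deserves to be made explicit in a final write-up: the crude bounds only yield $\frac{q^{M-j}-1}{q^{j+1}-1}\,q^j\,(q^{n'(\ell-j)}-1)<q^{\,M-j+n'(\ell-j)}$, whereas the target is $q^{n'(\ell+1-j)}-1$ rather than $q^{n'(\ell+1-j)}$; the exponent gap equals $n'-M+j$, so whenever $j\geq1$ or $M<n'$ it is at least $1$ and one concludes via $q^{n'(\ell+1-j)-1}\leq q^{n'(\ell+1-j)}-1$, valid for $q\geq2$. The single excepted case, $j=0$ with $M=n'$ (which forces $M=m'=n'$, i.e.\ counting full-rank words of a square MRD code), is exactly the boundary case you isolate, and your factorization $q^{n'(\ell+1)}-1=(q^{n'}-1)\sum_{i=0}^{\ell}q^{n'i}$, reducing it to $\frac{q^{n'\ell}-1}{q-1}<\sum_{i=0}^{\ell}q^{n'i}$, settles it. Comparing the two routes: the paper's citation keeps the text lean and outsources the verification to the references, while your argument is self-contained given Theorem \ref{th:weightdistribution} (already stated in the paper) and in fact proves slightly more than positivity, namely that the terms of Delsarte's alternating sum are strictly decreasing.
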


Theorem \ref{th:dualrelations} follows from the MacWilliam identities.

\begin{theorem}\label{th:dualrelations}(\cite[Proof of Corollary 44]{Ravagnani})
Let $\mathcal{C}$ be an MRD code in $\mathbb{F}_q^{m\times n}$ with minimum distance $d$. Let $m'=\min\{m,n\}$ and $n'=\max\{m,n\}$. Then for any $\nu \in \{0,\ldots,m'-d \}$ we have
\begin{equation}\label{eq:identities} {m' \brack \nu}_q+\sum_{i=d}^{m'-\nu} A_i {m'-i \brack \nu}_q=\frac{|\mathcal{C}|}{q^{n'\nu}} {m' \brack \nu}_q. 
\end{equation}
\end{theorem}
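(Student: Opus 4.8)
The plan is to derive \eqref{eq:identities} directly from the MacWilliams identities of Theorem \ref{th:MacWilliams}, exploiting that both $\mathcal{C}$ and its Delsarte dual are MRD (this is essentially Ravagnani's argument cited in the statement). First I would reduce to the case $m\leq n$. Since the adjoint code $\mathcal{C}^\top\subseteq\F_q^{n\times m}$ has the same rank distribution $(A_i)$ as $\mathcal{C}$, because transposition preserves the rank, and the same minimum distance, and since $\min\{n,m\}=m'$ and $\max\{n,m\}=n'$, replacing $\mathcal{C}$ by $\mathcal{C}^\top$ when $m>n$ lets me assume throughout that the first matrix dimension equals $m'$ and the second equals $n'$. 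After this normalisation the two quantities $m,n$ appearing in Theorem \ref{th:MacWilliams} coincide with $m',n'$.

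Next I would apply the MacWilliams identity for the chosen index $\nu\in\{0,\ldots,m'-d\}$, namely
\[ \sum_{i=0}^{m'-\nu} A_i {m'-i \brack \nu}_q = \frac{|\mathcal{C}|}{q^{n'\nu}} \sum_{j=0}^{\nu} B_j {m'-j \brack \nu-j}_q, \]
where $(B_j)$ is the rank distribution of $\mathcal{C}^\perp$. The heart of the argument is to collapse each side using the extremal rank spectra. On the right, Remark \ref{rk:dualMRD} gives that $\mathcal{C}^\perp$ is MRD with minimum distance $m'-d+2$; since $\mathcal{C}^\perp$ is $\F_q$-linear we have $B_0=1$ and $B_j=0$ for $1\leq j\leq m'-d+1$. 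As $\nu\leq m'-d<m'-d+2$, every index $1\leq j\leq\nu$ satisfies $B_j=0$, so only the term $j=0$ survives and the right-hand side reduces to $\frac{|\mathcal{C}|}{q^{n'\nu}}{m'\brack\nu}_q$. On the left, the MRD property of $\mathcal{C}$ together with $O\in\mathcal{C}$ gives $A_0=1$ and $A_i=0$ for $1\leq i\leq d-1$; pulling out the $i=0$ summand yields ${m'\brack\nu}_q+\sum_{i=d}^{m'-\nu}A_i{m'-i\brack\nu}_q$, which is exactly the left-hand side of \eqref{eq:identities}.

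There is no serious obstacle here: the content is entirely the MacWilliams transform of Theorem \ref{th:MacWilliams}, and the work amounts to bookkeeping with the vanishing ranges of $A_i$ and $B_j$. The only point requiring care is the reduction to $m\leq n$, so that the literal dimensions in Theorem \ref{th:MacWilliams} match the $m',n'$ of the statement; this is where the adjoint code is used, and it is also the step that explains why $n'=\max\{m,n\}$ appears in the exponent on the right-hand side.
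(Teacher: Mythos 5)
Your proposal is correct and follows essentially the same route as the paper: its proof likewise invokes Remark \ref{rk:dualMRD} to get that the Delsarte dual is MRD with minimum distance $m'-d+2$ and then applies the MacWilliams identities of Theorem \ref{th:MacWilliams}, leaving implicit the vanishing of the $A_i$ and $B_j$ in the relevant ranges. Your explicit normalisation to $m\leq n$ via the adjoint code is precisely the bookkeeping the paper glosses over, so nothing needs to change.
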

\begin{proof}
By Remark \ref{rk:dualMRD}, the minimum distance of $\mathcal{C}^\perp$ is $m'-d+2$. 
Thus, Theorem \ref{th:MacWilliams} proves the claim.
\end{proof}

Two RM codes $\C$ and $\C'$ in $\mathbb{F}_q^{m\times n}$ are \emph{equivalent} if and only if there exist $X \in \mathrm{GL}(m,q)$, $Y \in \mathrm{GL}(n,q)$, $Z \in \F_q^{m\times n}$ and a field automorphism $\sigma$ of $\F_q$ such that
\[\C'=\{XC^\sigma Y + Z \colon C \in \C\}.\]
The \emph{left} and \emph{right idealisers}
$L(\C)$ and $R(\C)$ of an RM code $\mathcal{C}\subseteq\F_{q}^{m\times n}$ are defined as
\[ L(\C)=\{ Y \in \F_q^{m \times m} \colon YC\in \C\hspace{0.1cm} \text{for all}\hspace{0.1cm} C \in \C\},\]
\[ R(\C)=\{ Z \in \F_q^{n \times n} \colon CZ\in \C\hspace{0.1cm} \text{for all}\hspace{0.1cm} C \in \C\}.\]
The notion of idealisers have been introduced by  Liebhold and Nebe in \cite[Definition 3.1]{LN2016}; they are invariant under equivalences of rank metric codes. Further invariants have been introduced in \cite{GiuZ,NPH2}.
In \cite{LTZ2}, idealisers have been studied in details and the following result has been proved.

\begin{theorem}\label{th:propertiesideal}
Let $\mathcal{C}$ and $\mathcal{C}^\prime$ be $\fq$-linear RM codes of $\fq^{m\times n}$.
\begin{itemize}
    \item If $\mathcal{C}$ and $\mathcal{C}^\prime$ are equivalent, then their left and right idealisers are isomorphic as $\fq$-algebras (\cite[Proposition 4.1]{LTZ2}).
    \item $L(\C^\top)=R(\C)^\top$ and $R(\C^\top)=L(\C)^\top$ (\cite[Proposition 4.2]{LTZ2}).
    \item Let $\mathcal{C}$ have minimum distance $d>1$.
    If $m \leq n$, then $L(\C)$ is a finite field with $|L(\C)|\leq q^m$.
If $m \geq n$, then $R(\C)$ is a finite field with $|R(\C)|\leq q^n$.
In particular, when $m=n$, $L(\C)$ and $R(\C)$ are both finite fields (\cite[Theorem 5.4 and Corollary 5.6]{LTZ2}).
\end{itemize}
\end{theorem}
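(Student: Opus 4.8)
The three statements are of increasing depth, and I would prove them in the stated order, reserving the bulk of the effort for the last one. For the first item I may assume $\mathcal{C}'=\{XC^\sigma Y\colon C\in\C\}$ (the translation part $Z$ is immaterial for $\fq$-linear codes, since a linear code is invariant under translation by any of its own elements). The plan is to write down explicit conjugation maps and check that they land in the right idealiser. For the left idealisers I would take $\phi\colon L(\C)\to L(\mathcal{C}')$, $T\mapsto XT^\sigma X^{-1}$: indeed $(XT^\sigma X^{-1})(XC^\sigma Y)=X(TC)^\sigma Y\in\mathcal{C}'$ whenever $T\in L(\C)$, and $\phi$ respects sums and products and is bijective, giving the asserted $\fq$-algebra isomorphism (semilinear via $\sigma$). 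For the right idealisers the analogous map is $Z'\mapsto Y^{-1}Z'^{\sigma}Y$, using $(XC^\sigma Y)(Y^{-1}Z'^\sigma Y)=X(CZ')^\sigma Y$. For the second item I would argue straight from the definitions: $Y\in L(\C^\top)$ means $YC^t\in\C^\top$ for every $C\in\C$, i.e. $(YC^t)^t=CY^t\in\C$, i.e. $Y^t\in R(\C)$; hence $L(\C^\top)=R(\C)^\top$, and symmetrically $R(\C^\top)=L(\C)^\top$.

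The third item is the heart of the matter. Set $L:=L(\C)\subseteq\fq^{m\times m}$; it is a finite $\fq$-algebra containing $I_m$. Since a finite division ring is a field by Wedderburn's little theorem, it suffices to prove that every nonzero $Y\in L$ is invertible. The mechanism I would use is a rank obstruction: if $Y\in L$ has $\mathrm{rk}(Y)<d$, then for every $C\in\C$ the codeword $YC$ has rank at most $\mathrm{rk}(Y)<d$, so $YC=0$; hence $\mathrm{im}(C)\subseteq\ker Y$ for all $C$, and as the column spaces of the codewords span $\fq^m$ we obtain $\ker Y=\fq^m$, that is $Y=0$. In other words, $L$ contains no nonzero element of rank $<d$.

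It then remains to upgrade this to: no nonzero element of $L$ is singular. Here I would pass to the commutative subalgebra $\fq[Y]$ and its minimal polynomial $\mu(x)$: if $x\nmid\mu$ then $Y^{-1}\in\fq[Y]\subseteq L$ and we are done, whereas if $x\mid\mu$ one must extract from the powers of $Y$ a nonzero element of rank $<d$ in order to contradict the previous paragraph. This reduction — controlling the ranks of the powers (equivalently, the nilpotent part) of a hypothetical singular element — is the step I expect to be the main obstacle, and it is precisely where the finer structure of $\C$ (non-degeneracy of the column spaces, respectively the MRD property) has to be invoked. Once $L$ is known to be a field the bound is immediate: $\fq^m$ becomes an $L$-vector space via $Y\cdot v=Yv$, with $\fq I_m$ acting as scalars, so $m=[L:\fq]\cdot\dim_L(\fq^m)$; thus $[L:\fq]\mid m$ and $|L|=q^{[L:\fq]}\le q^m$.

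Finally, the statement for $R(\C)$ when $m\ge n$ I would deduce rather than reprove. The adjoint $\C^\top\subseteq\fq^{n\times m}$ has $n\le m$ and the same minimum distance $d>1$, since transposition preserves rank; so the case already treated shows that $L(\C^\top)$ is a field of order at most $q^{n}$. By the second item $R(\C)=L(\C^\top)^\top$, and as transposition is a ring isomorphism on the commutative algebra $L(\C^\top)$, it follows that $R(\C)$ is a field with $|R(\C)|\le q^{n}$. The case $m=n$ is simply the conjunction of the two.
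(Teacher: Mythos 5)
The paper offers no proof of this theorem: all three items are quoted from \cite{LTZ2} with citations, so your attempt can only be measured against the proofs in that reference. Your first two items are correct and coincide with the standard arguments there: absorbing the translation $Z$ (your justification is right), conjugating via $T\mapsto XT^{\sigma}X^{-1}$, and the transposition computation. One caveat: your conjugation map is $\sigma$-semilinear, so strictly it is a ring isomorphism rather than an $\fq$-algebra isomorphism; this is repairable (compose with a suitable Galois automorphism once the idealisers are known to be finite fields), but it deserves a sentence.

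The genuine gap is in the third item, and it is larger than you acknowledge: the step you defer (passing from ``no nonzero element of rank $<d$'' to ``no nonzero singular element'') is not a missing lemma about $L(\C)$ as an abstract finite algebra, because the statement as quoted is \emph{false} for general $\fq$-linear codes with $d>1$. Take $q=3$, $M_{a,b}=\begin{pmatrix}a&b\\-b&a\end{pmatrix}$, and let $\mathcal{C}$ consist of all block matrices $\begin{pmatrix}M_{a,b}&0\\0&M_{c,d}\end{pmatrix}$ with $a,b,c,d\in\F_3$. Every nonzero codeword has rank $2$ or $4$, so $d=2>1$; the code contains $I_4$, so full-rank codewords exist and the column spaces of codewords span $\F_3^4$; yet $Z=\mathrm{diag}(1,1,0,0)$ and $I_4-Z$ both lie in $L(\mathcal{C})$ and $Z(I_4-Z)=0$, so $L(\mathcal{C})$ has zero divisors and is not a field. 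This shows two things: your ``rank obstruction'' step silently assumes the spanning of column spaces, which is not among the hypotheses; and, decisively, even granting it the theorem fails, so no analysis of minimal polynomials inside $L(\C)$ can close the gap. The hypothesis that makes the statement true is that $\mathcal{C}$ is MRD: the cited results \cite[Theorem 5.4 and Corollary 5.6]{LTZ2} concern MRD codes (the quotation in the present paper is loose in omitting this), and every application of this item in the paper is to an MRD code.

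With MRD-ness the correct proof is a counting argument, not ring theory. If $Z\in L(\mathcal{C})$ is nonzero and singular of rank $s$, $0<s<m$, then the kernel $K=\{C\in\mathcal{C}\colon ZC=0\}$ and the image $Z\mathcal{C}\subseteq\mathcal{C}$ of the map $C\mapsto ZC$ are linear codes of minimum distance at least $d$ whose codewords have column spaces inside fixed subspaces of dimensions $m-s$ and $s$ respectively; the Singleton bound (Theorem \ref{th:Singleton}) applied to each gives
\[|\mathcal{C}|=|K|\cdot|Z\mathcal{C}|\le q^{n\left(\max\{0,\,m-s-d+1\}+\max\{0,\,s-d+1\}\right)}<q^{n(m-d+1)},\]
the last inequality because $0<s<m$ and $d>1$, contradicting $|\mathcal{C}|=q^{n(m-d+1)}$. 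Hence every nonzero element of $L(\mathcal{C})$ is invertible; finiteness forces the inverses to lie in $L(\mathcal{C})$, and Wedderburn gives a field. From that point on, your concluding steps (the bound $|L(\C)|=q^e$ with $e\mid m$ via the $L(\C)$-vector-space structure of $\F_q^m$, and the adjoint argument deducing the $R(\C)$ case from the $L(\C)$ case) are correct as written.
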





Let $\mathcal{C}$ be an RM code in $\fq^{n\times n}$, and $A\in\fq^{m\times n}$ be a matrix of rank $m\leq n$.
The RM code $A\mathcal{C}=\{AM\colon M\in\mathcal{C}\}\subseteq\fq^{m\times n}$ is a \emph{punctured code} obtained by \emph{puncturing $\mathcal{C}$ with $A$}. 

\begin{theorem}\label{th:punct}(\cite[Corollary 35]{BR}, \cite[Theorem 3.2]{CsS})
Let $\C$ be an MRD code with parameters $(n,n,q;d)$, $A \in \F_q^{m\times n}$ be a matrix of rank $m$, and $n-d\leq m\leq n$. Then the punctured code $A\C$ is an MRD code with parameters $(m,n,q;d+m-n)$ and $(A\C)^\top$ is an MRD code with parameters $(n,m,q;d+m-n)$.
\end{theorem}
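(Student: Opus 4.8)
The plan is to control the ranks of the products $AM$ through the Sylvester rank inequality, and then to let the Singleton bound (Theorem \ref{th:Singleton}) do the rest; the statement about the adjoint will follow for free from Remark \ref{rk:dualMRD}. Set $d'=d+m-n$ for brevity. Since $M\mapsto AM$ is $\fq$-linear, $A\C$ is an $\fq$-subspace of $\fq^{m\times n}$, so it suffices to pin down its size and its minimum distance.

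First I would record the basic rank estimate. For every $M\in\C$, the Sylvester inequality applied to $A\in\fq^{m\times n}$ of rank $m$ and to $M\in\fq^{n\times n}$ gives $\mathrm{rk}(AM)\geq \mathrm{rk}(A)+\mathrm{rk}(M)-n=m+\mathrm{rk}(M)-n$. When $M\neq 0$ we have $\mathrm{rk}(M)\geq d$ because $\C$ has minimum distance $d$, and hence $\mathrm{rk}(AM)\geq d+m-n=d'$. Assuming $d'\geq 1$ (equivalently $m\geq n-d+1$, the range carrying the actual content), this forces $AM\neq 0$ for every nonzero $M\in\C$. Consequently $M\mapsto AM$ is injective on $\C$, so $|A\C|=|\C|=q^{n(n-d+1)}$, and the same estimate shows that the minimum distance of $A\C$ is at least $d'$.

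Now I would close the argument with the Singleton bound. Writing $\delta$ for the true minimum distance of $A\C\subseteq\fq^{m\times n}$, Theorem \ref{th:Singleton} gives $|A\C|\leq q^{n(m-\delta+1)}$ since $m\leq n$. We have already shown $\delta\geq d'$ and $|A\C|=q^{n(n-d+1)}=q^{n(m-d'+1)}$; if $\delta$ were strictly larger than $d'$, the bound would yield $|A\C|<q^{n(m-d'+1)}$, a contradiction. Hence $\delta=d'$ and $A\C$ attains the Singleton bound, i.e.\ it is an MRD code with parameters $(m,n,q;d+m-n)$. Finally, applying Remark \ref{rk:dualMRD} to the MRD code $A\C$ shows that its adjoint $(A\C)^\top\subseteq\fq^{n\times m}$ is again MRD with the same minimum distance $d'$, which are exactly the parameters $(n,m,q;d+m-n)$.

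The one genuinely load-bearing step is the size count $|A\C|=|\C|$, that is, the injectivity of $M\mapsto AM$ on $\C$: this is precisely where the minimum-distance hypothesis on $\C$ enters, and it is what prevents the punctured code from collapsing below the Singleton bound. The rank estimates themselves are one-line consequences of Sylvester's inequality, so no serious obstacle arises there. The only point requiring care is the degenerate boundary $m=n-d$, where $d'=0$: here injectivity necessarily fails (the putative size $q^{n(m+1)}$ would exceed $|\fq^{m\times n}|$), so this case falls outside the clean Singleton argument and must be regarded as degenerate.
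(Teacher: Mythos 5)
The paper does not prove this theorem: it is imported verbatim from \cite{BR} and \cite{CsS}, so there is no internal proof to compare yours against. Your argument is correct wherever the statement is genuinely meaningful, namely for $n-d+1\leq m\leq n$, and it is essentially the standard one from the cited literature: Sylvester's inequality yields both the injectivity of $M\mapsto AM$ on $\C$ (hence $|A\C|=|\C|=q^{n(n-d+1)}=q^{n(m-(d+m-n)+1)}$) and the lower bound $\delta\geq d+m-n$ on the minimum distance $\delta$ of $A\C$; Theorem \ref{th:Singleton} then forces $\delta=d+m-n$ with equality in the Singleton bound, and Remark \ref{rk:dualMRD} disposes of $(A\C)^\top$. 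Two small remarks. First, the statement does not assume $\C$ to be $\fq$-linear, while your write-up silently does (you bound $\mathrm{rk}(M)$ for nonzero codewords and treat $A\C$ as a subspace); this costs nothing, since applying the same Sylvester estimate to $\mathrm{rk}\bigl(A(M-N)\bigr)$ for distinct $M,N\in\C$ gives both injectivity and the distance bound with no linearity hypothesis. Second, you are right to flag the endpoint $m=n-d$: there $d+m-n=0$, and no code can have minimum distance $0$ under the paper's definitions, so the statement as printed is degenerate at that endpoint; this is a defect of the statement rather than of your proof, and it is harmless for the paper, which only invokes the theorem with punctured minimum distance $n-\iota\geq 1$.
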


In the literature equivalent representations of RM codes are used, other than the matrix representation that has been described above, and some of them will be used in this paper.
In particular, we see the elements of an $\fq$-linear RM code $\mathcal{C}$ with parameters $(m,n,q;d)$ as:
\begin{itemize}
    \item matrices of $\F_q^{m\times n}$ having rank at least $d$;
    \item $\fq$-linear maps $V\to W$ where $V=V(n,q)$ and $W=V(m,q)$, having usual map rank at least $d$;
    \item when $m=n$, elements of the $\fq$-algebra $\mathcal{L}_{n,q}$ of $q$-polynomials over $\fqn$ modulo $x^{q^n}-x$, having rank at least $d$ as an $\fq$-linear map $\fqn\to\fqn$.
\end{itemize}

\section{Connection between $\fq$-vector spaces and rank metric codes}\label{sec:subMRD}

In this section, an $\fq$-linear RM code with parameters $(m,n,q;d)$ is regarded as a set of $\fq$-linear maps $W_1=V(n,q)\to W_2=V(m,q)$. The following notation will be used.
\begin{itemize}
    \item $\omega_{\alpha}:\fqn\to\fqn$, $x\mapsto\alpha x$, for any $\alpha\in\fqn$.
    \item $\mathcal{F}_n=\{\omega_{\alpha}\colon\alpha\in\fqn\}$, which is a field isomorphic to $\fqn$.
    \item $\mathcal{F}_{n,q}=\{\omega_{\alpha}\colon \alpha\in\fq\}$, which is a subfield of $\mathcal{F}_n$ isomorphic to $\fq$.
    \item $\tau_{\mathbf{v}}:\fqn\to W_1$, $\lambda\mapsto \lambda \mathbf{v}$, for any $\mathbf{v}\in W_1$.
\end{itemize}
We define a family of $\fq$-linear RM codes associated with an $\fq$-vector space $U$.

Let $n,r,k$ be positive integers with $k<rn$, $U$ be a $k$-dimensional $\fq$-subspace of an $r$-dimensional $\fqn$-vector space $V$, $W$ be an $(rn-k)$-dimensional $\fq$-vector space, and $G:V\to W$ be an $\fq$-linear map with kernel $U$.
For any $\mathbf{v} \in V$ define the $\fq$-linear map $\Gamma_{\mathbf{v}}=G\circ\tau_{\mathbf{v}}$.

\begin{theorem}\label{th:construction}
Let $V=V(r,q^n)$ and $W=V(rn-k,q)$. Let $U=V(k,q)$ be an $\fq$-subspace of $V$, and $G:V\to W$ be an $\fq$-linear map with $\ker(G)=U$.
Define
\[
\iota=\max\{\dim_{\fq}(U\cap\langle\mathbf{v}\rangle_{\fqn}) \colon \mathbf{v}\in V^*  \}.
\]
If $\iota<n$, then the pair $(U,G)$ defines an $\fq$-linear RM code
\begin{equation}\label{eq:rd}
    \mathcal{C}_{U,G}=\left\{ \Gamma_{\mathbf{v}}=G\circ \tau_{\mathbf{v}} \colon \mathbf{v} \in V \right\}
\end{equation}
of dimension $rn$ with parameters $(rn-k,n,q;n-\iota)$, whose right idealiser contains $\mathcal{F}_n$.
\end{theorem}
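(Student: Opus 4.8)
The plan is to study the single $\fq$-linear map $\Phi\colon V\to\mathrm{Hom}_{\fq}(\fqn,W)$, $\mathbf{v}\mapsto\Gamma_{\mathbf{v}}$, whose image is $\mathcal{C}_{U,G}$, and to read off each of the four assertions (linearity, dimension, minimum distance, right idealiser) from the behaviour of $\Phi$. The organising identity is the semilinearity relation $\tau_{\alpha\mathbf{v}}=\tau_{\mathbf{v}}\circ\omega_{\alpha}$, valid for every $\alpha\in\fqn$, which is immediate from $\tau_{\mathbf{v}}(\alpha\lambda)=\alpha\lambda\mathbf{v}=\lambda(\alpha\mathbf{v})$; composing with $G$ gives $\Gamma_{\alpha\mathbf{v}}=\Gamma_{\mathbf{v}}\circ\omega_{\alpha}$. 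Since $\mathbf{v}\mapsto\tau_{\mathbf{v}}$ is additive and $\omega_c$ is ordinary scalar multiplication when $c\in\fq$, the map $\Phi$ is $\fq$-linear; hence $\mathcal{C}_{U,G}$ is an $\fq$-subspace of $\mathrm{Hom}_{\fq}(\fqn,W)\cong\fq^{(rn-k)\times n}$, i.e.\ an $\fq$-linear RM code.

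For the dimension I would show that $\Phi$ is injective. If $\Gamma_{\mathbf{v}}=0$ then $G(\lambda\mathbf{v})=0$ for all $\lambda$, i.e.\ $\la\mathbf{v}\ra_{\fqn}\subseteq\ker G=U$; were $\mathbf{v}\neq\mathbf{0}$, this $n$-dimensional $\fq$-space would lie in $U$, forcing $\dim_{\fq}(U\cap\la\mathbf{v}\ra_{\fqn})=n$ and contradicting $\iota<n$. Thus $\ker\Phi=\{\mathbf{0}\}$ and $\dim_{\fq}\mathcal{C}_{U,G}=\dim_{\fq}V=rn$, so $|\mathcal{C}_{U,G}|=q^{rn}$.

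The minimum distance rests on a rank formula. For $\mathbf{v}\neq\mathbf{0}$ the map $\tau_{\mathbf{v}}$ is an $\fq$-isomorphism $\fqn\to\la\mathbf{v}\ra_{\fqn}$, whence $\ker\Gamma_{\mathbf{v}}=\tau_{\mathbf{v}}^{-1}(U\cap\la\mathbf{v}\ra_{\fqn})$ and therefore
\[\mathrm{rk}(\Gamma_{\mathbf{v}})=n-\dim_{\fq}\ker\Gamma_{\mathbf{v}}=n-\dim_{\fq}(U\cap\la\mathbf{v}\ra_{\fqn}).\]
This quantity depends only on the line $\la\mathbf{v}\ra_{\fqn}$, and since $\mathcal{C}_{U,G}$ is $\fq$-linear its minimum distance equals the minimum rank over nonzero codewords; minimising over $\mathbf{v}\in V^*$ and invoking the definition of $\iota$ gives $d=n-\iota$ (the maximiser in the definition of $\iota$ produces a codeword attaining this value, and $\iota<n$ ensures $d\geq1$). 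Together with $|\mathcal{C}_{U,G}|=q^{rn}$ this yields the parameters $(rn-k,n,q;n-\iota)$.

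Finally, the inclusion $\mathcal{F}_n\subseteq R(\mathcal{C}_{U,G})$ drops out of the same semilinearity relation: $\Gamma_{\mathbf{v}}\circ\omega_{\alpha}=\Gamma_{\alpha\mathbf{v}}\in\mathcal{C}_{U,G}$ because $\alpha\mathbf{v}\in V$. I expect the only genuinely delicate point to be this last step, namely making sure that the right idealiser—defined via matrix multiplication $CZ$—corresponds in the functional model to precomposition on the source $\fqn$ rather than to composition on the target $W$; so I would spell out the matrix-versus-map dictionary explicitly (right multiplication by the matrix of $\omega_{\alpha}$ is $C\mapsto C\circ\omega_{\alpha}$) before concluding. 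Everything else reduces to routine $\fq$-dimension counts.
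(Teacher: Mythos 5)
Your proposal is correct and follows essentially the same route as the paper's proof: the kernel computation $\dim_{\fq}\ker\Gamma_{\mathbf{v}}=\dim_{\fq}(U\cap\langle\mathbf{v}\rangle_{\fqn})$ for the minimum distance, injectivity of $\mathbf{v}\mapsto\Gamma_{\mathbf{v}}$ (forced by $\iota<n$) for the dimension, and the relation $\Gamma_{\mathbf{v}}\circ\omega_{\alpha}=\Gamma_{\alpha\mathbf{v}}$ for the right idealiser. Packaging the argument as injectivity of a single $\fq$-linear map $\Phi$ and spelling out the matrix-versus-map dictionary for the idealiser are only presentational refinements of the same proof.
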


\begin{proof}
For any $\mathbf{v},\mathbf{w}\in V$ and $\alpha\in\fq$ we have $\Gamma_{\mathbf{v}}+\Gamma_{\mathbf{w}}=\Gamma_{\mathbf{v}+\mathbf{w}}$ and $\alpha\,\Gamma_{\mathbf{v}}=\Gamma_{\alpha\mathbf{v}}$, and hence $\mathcal{C}_{U,G}$ is an $\fq$-vector space.

For any $\mathbf{v}\in V$, let $R_{\mathbf{v}}=\{ \lambda \in \fqn \colon \lambda \mathbf{v} \in U \}$.
Clearly $\ker (\Gamma_{\mathbf{v}})=R_{\mathbf{v}}$ and, when $\mathbf{v}\ne\mathbf{0}$, $\dim_{\fq}(R_{\mathbf{v}})=\dim_{\fq}(U\cap\langle \mathbf{v}\rangle_{\fqn})$.
Then $\dim_{\fq}(\ker(\Gamma_{\mathbf{v}}))\leq\iota$ and there exists $\mathbf{u}\in V^*$ such that $\dim_{\fq}(\ker(\Gamma_{\mathbf{u}}))=\iota$, so that the minimum distance of $\mathcal{C}_{U,G}$ is $n-\iota$.

For any $\mathbf{v},\mathbf{w}\in V$, we have $\Gamma_{\mathbf{v}}=\Gamma_{\mathbf{w}}$ if and only if $\mathbf{v}=\mathbf{w}$.
In fact, if $\Gamma_{\mathbf{v}}=\Gamma_{\mathbf{w}}$, then $G(\lambda(\mathbf{v}-\mathbf{w}))=\mathbf{0}$ for every $\lambda\in\fqn$, whence $\dim_{\fq}(U\cap\langle\mathbf{v}-\mathbf{w}\rangle_{\fqn})=n>\iota$ and hence $\mathbf{v}=\mathbf{w}$.
Therefore, $\dim_{\fq}(\mathcal{C}_{U,G})=rn$.

Finally, for any $\alpha\in\fqn$ and $\mathbf{v}\in V$ we have $\Gamma_{\mathbf{v}}\circ \omega_{\alpha}=\Gamma_{\alpha\mathbf{v}}$. Then $R(\mathcal{C}_{U,G})$ contains $\mathcal{F}_n$.
\end{proof}

We now characterize the codes $\mathcal{C}_{U,G}$ which are MRD.

\begin{theorem}\label{th:MRDiff}
Let $V=V(r,q^n)$ and $W=V(rn-k,q)$. Let $U=V(k,q)$ be an $\fq$-subspace of $V$, $G:V\to W$ be an $\fq$-linear map with $\ker(G)=U$, $\iota=\max\{\dim_{\fq}(U\cap\langle\mathbf{v}\rangle_{\fqn}) \colon \mathbf{v}\in V^*  \}$ with $\iota<n$, and $\mathcal{C}_{U,G}=\left\{ \Gamma_{\mathbf{v}} \colon \mathbf{v} \in V \right\}$.

Then $\mathcal{C}_{U,G}$ is an $\fq$-linear MRD code if and only if
\[
(\iota+1)\mid rn\quad\textrm{and}\quad k=\frac{\iota rn}{\iota+1}\leq(r-1)n.
\]
In this case,
\begin{itemize}
    \item the parameters of $\mathcal{C}_{U,G}$ are $\left(\,\frac{rn}{\iota+1}\,,\,n\,,\,q\,;\,n-\iota\,\right)$;
    \item the right idealiser of $\mathcal{C}_{U,G}$ is $\mathcal{F}_n$;
    \item the weight distribution of $\mathcal{C}_{U,G}$ is
    \[ A_{n-s}={n \brack s}_q \sum_{j=0}^{\iota-s} (-1)^{j}{n-s \brack j}_q q^{\binom{j}{2}}\left(q^\frac{rn(\iota-s-j+1)}{\iota+1}-1\right), \]
    for $s \in \{0,1,\ldots,\iota\}$.
\end{itemize}
\end{theorem}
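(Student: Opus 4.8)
The plan is to reduce the entire statement to a single numerical condition coming from the Singleton bound, and then to read off the remaining data (parameters, idealiser, weight distribution) from the results already established. By Theorem \ref{th:construction} we know that $\mathcal{C}_{U,G}$ is an $\fq$-linear code of $\fq$-dimension $rn$ lying in $\fq^{m\times n}$ with $m=rn-k$ and minimum distance $d=n-\iota$, and that $\mathcal{F}_n\subseteq R(\mathcal{C}_{U,G})$. Since $|\mathcal{C}_{U,G}|=q^{rn}$ is thus fixed, being MRD is exactly equality in Theorem \ref{th:Singleton}, i.e. $rn=\max\{m,n\}\,(\min\{m,n\}-d+1)$. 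So the whole first part is the problem of solving this equation in the two regimes $m\ge n$ and $m<n$.

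For the \emph{if} direction I would simply substitute. If $(\iota+1)\mid rn$ and $k=\frac{\iota rn}{\iota+1}\le (r-1)n$, then $m=rn-k=\frac{rn}{\iota+1}$, and the inequality $k\le (r-1)n$ is precisely $m\ge n$. Hence $\max\{m,n\}=m$, $\min\{m,n\}=n$, and the right-hand side equals $\frac{rn}{\iota+1}\,(n-(n-\iota)+1)=\frac{rn}{\iota+1}(\iota+1)=rn$; the bound is met, so $\mathcal{C}_{U,G}$ is MRD with parameters $\left(\frac{rn}{\iota+1},n,q;n-\iota\right)$.

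For the \emph{only if} direction I would start from the Singleton equality and solve for $m$. In the regime $m\ge n$ it reads $rn=m(\iota+1)$, which forces $(\iota+1)\mid rn$, $m=\frac{rn}{\iota+1}$, hence $k=rn-m=\frac{\iota rn}{\iota+1}$, and the constraint $m\ge n$ is exactly $k\le(r-1)n$; this yields all three asserted conditions. The step I expect to be the main obstacle is the complementary regime $m<n$, where the equality instead becomes $r=m-n+\iota+1$ and a priori produces a different parameter family (with $\iota\ge r$). I would try to eliminate this case through the right idealiser and Theorem \ref{th:propertiesideal}: since $\mathcal{F}_n\subseteq R(\mathcal{C}_{U,G})$ has order $q^n$, and for $m\ge n$ (with $d>1$) the right idealiser of an MRD code is a field of order at most $q^n$, the inclusion forces $R(\mathcal{C}_{U,G})=\mathcal{F}_n$, which simultaneously proves the second bullet. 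The genuinely delicate point is to argue that a right idealiser of order $q^n$ is incompatible with the regime $m<n$ for the codes under consideration, so that only $m\ge n$ survives; I would attack this by passing to $\mathcal{C}_{U,G}^{\top}\subseteq\fq^{n\times m}$, where $n>m$, and comparing $R(\mathcal{C}_{U,G}^\top)=L(\mathcal{C}_{U,G})^\top$ against the field bound, reconciling it with the maximality of $\mathcal{F}_n$.

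Finally, the weight distribution is a direct corollary of Theorem \ref{th:weightdistribution}. Once $\mathcal{C}_{U,G}$ is known to be MRD with $m'=\min\{m,n\}=n$, $n'=\max\{m,n\}=\frac{rn}{\iota+1}$ and $d=n-\iota$, I would substitute $d+\ell=n-s$, i.e. $\ell=\iota-s$, into the formula of Theorem \ref{th:weightdistribution}, use the symmetry ${n\brack n-s}_q={n\brack s}_q$, and reindex the inner sum by $j=\iota-s-t$. Then the sign $(-1)^{t-\ell}$ becomes $(-1)^{j}$, the binomial ${\ell+d\brack \ell-t}_q$ becomes ${n-s\brack j}_q$, the factor $q^{\binom{\ell-t}{2}}$ becomes $q^{\binom{j}{2}}$, and the exponent $n'(t+1)$ becomes $\frac{rn(\iota-s-j+1)}{\iota+1}$, reproducing verbatim the claimed expression for $A_{n-s}$. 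The only side check is the range of $s$: the attained ranks are $n-s$ with $s=\dim_{\fq}(U\cap\langle\mathbf v\rangle)\in\{0,\dots,\iota\}$, and Lemma \ref{lemma:weight} guarantees that all and only these weights occur.
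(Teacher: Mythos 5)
Your ``if'' direction, the $rn-k\ge n$ branch of the ``only if'' direction, the determination of the right idealiser from $\mathcal{F}_n\subseteq R(\mathcal{C}_{U,G})$ together with Theorem~\ref{th:propertiesideal}, and the weight-distribution computation are all correct and coincide with what the paper does; in fact your substitution $\ell=\iota-s$, $j=\iota-s-t$ spells out the re-indexing of Theorem~\ref{th:weightdistribution} that the paper leaves implicit.

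The gap is exactly the case you flagged, $m=rn-k<n$ (equivalently $k>(r-1)n$), and the route you propose for it cannot be completed. Theorem~\ref{th:propertiesideal} is one-sided: for a code in $\fq^{m\times n}$ with $m\le n$ and $d>1$ it bounds only the \emph{left} idealiser (by $q^m$) and says nothing about $R(\mathcal{C})$. Taking adjoints does not help: for $\mathcal{C}^{\top}\subseteq\fq^{n\times m}$, which has more rows than columns, the applicable half of the theorem bounds $R(\mathcal{C}^{\top})=L(\mathcal{C})^{\top}$, so you merely recover the bound on $L(\mathcal{C})$ and never obtain one on $R(\mathcal{C})=L(\mathcal{C}^{\top})^{\top}$; hence no contradiction with $|R(\mathcal{C}_{U,G})|\ge q^n$ is available. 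The paper disposes of this case by a different, elementary observation: if $k>(r-1)n$, then $\dim_{\fq}(U\cap\langle\mathbf{v}\rangle_{\fqn})\ge k+n-rn\ge 1$ for every $\mathbf{v}\in V^*$, so no $\Gamma_{\mathbf{v}}$ has rank $n$, and it then invokes Lemma~\ref{lemma:weight} to conclude that $\mathcal{C}_{U,G}$ is not MRD.

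You should know, however, that your instinct that this case is ``genuinely delicate'' is well founded, because the paper's argument is not conclusive either: Lemma~\ref{lemma:weight} only guarantees codewords of every rank from $d$ up to $\min\{m,n\}$, which in this regime equals $m<n$, so the absence of rank-$n$ words contradicts nothing. Indeed, here equality in Theorem~\ref{th:Singleton} reads $rn=n\,(m-(n-\iota)+1)$, i.e. $k=(r-1)(n-1)+\iota$, and such subspaces exist: for $r=2$ and $n\ge 4$, take $U=\{(x,x^q+u)\colon x\in\fqn,\ u\in\langle u_0\rangle_{\fq}\}$ with $u_0\ne 0$ and $\mathrm{Tr}_{q^n/q}(u_0)=0$; then $\iota=2$, $k=n+1>(r-1)n$, and $\mathcal{C}_{U,G}$ consists of $q^{2n}$ maps in $\fq^{(n-1)\times n}$ with minimum distance $n-2$, which attains the Singleton-like bound. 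So in the regime $m<n$ the claimed conclusion itself fails: your gap cannot be closed by any argument, and the ``only if'' statement is safe only under the additional hypothesis $k\le(r-1)n$ (equivalently $rn-k\ge n$), under which your proof and the paper's agree and are complete.
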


\begin{proof}
If $k>(r-1)n$, then for every $\mathbf{v}\in V^*$ we have $\dim_{\fq}(U\cap\langle\mathbf{v}\rangle_{\fqn})\geq1$ and hence $\dim_{\fq}(\ker(\Gamma_{\mathbf{v}}))=\dim_{\fq}(U\cap\langle\mathbf{v}\rangle_{\fqn})\geq1$, so that $\mathcal{C}_{U,G}$ has no elements of rank $n$. Thus, by Lemma \ref{lemma:weight}, $\mathcal{C}_{U,G}$ is not an MRD code.

Suppose $k\leq (r-1)n$. Then $rn-k\geq n$ and the Singleton-like bound of Theorem \ref{th:Singleton} reads
\[ rn\leq(rn-k)(n-(n-\iota)+1). \]
Therefore, $\mathcal{C}_{U,G}$ is an MRD code if and only if $\iota+1$ divides $rn$ and $k=\frac{\iota rn}{\iota+1}$.

In this case, the parameters of $\mathcal{C}_{U,G}$ are provided by Theorem \ref{th:construction} and the weight distribution of $\mathcal{C}_{U,G}$ follows from Theorem \ref{th:weightdistribution}.
Also, the right idealiser of $\mathcal{C}_{U,G}$ contains $\mathcal{F}_n$ by Theorem \ref{th:construction}, and hence is equal to $\mathcal{F}_n$ by Theorem \ref{th:propertiesideal}.
\end{proof}

Different choices of the map $G$ yield equivalent codes, i.e. $\mathcal{C}_{U,G}$ is uniquely determined by $U$, up to equivalence.

\begin{proposition}
Let $V=V(r,q^n)$ and $W=V(rn-k,q)$. Let $U=V(k,q)$ be an $\fq$-subspace of $V$, $G:V\to W$ and $\overline{G}:V\to W$ be two $\fq$-linear maps with $\ker(G)=\ker(\overline{G})=U$.
Then the codes $\mathcal{C}_{U,G}$ and $\mathcal{C}_{U,\overline{G}}$ are equivalent.
\end{proposition}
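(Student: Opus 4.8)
The plan is to show that changing $G$ to $\overline{G}$ only post-composes every codeword with a fixed $\fq$-linear automorphism of $W$, and that such post-composition is exactly left-multiplication by an invertible matrix, hence an equivalence of rank metric codes in the sense recalled in Section \ref{sec:rank}.

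First I would record that both $G$ and $\overline{G}$ are surjective. Indeed, $\dim_{\fq}(V)=rn$ and $\ker(G)=\ker(\overline{G})=U$ has dimension $k$, so the images of $G$ and of $\overline{G}$ both have $\fq$-dimension $rn-k=\dim_{\fq}(W)$ and therefore coincide with $W$. Writing $\pi\colon V\to V/U$ for the quotient map, the first isomorphism theorem produces $\fq$-isomorphisms $g,h\colon V/U\to W$ with $G=g\circ\pi$ and $\overline{G}=h\circ\pi$. Setting $\phi:=h\circ g^{-1}$, which is an $\fq$-linear automorphism of $W$, we get $\phi\circ G=h\circ g^{-1}\circ g\circ\pi=h\circ\pi=\overline{G}$.

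Next, for every $\mathbf{v}\in V$ one has $\overline{G}\circ\tau_{\mathbf{v}}=\phi\circ G\circ\tau_{\mathbf{v}}=\phi\circ\Gamma_{\mathbf{v}}$, so that $\mathcal{C}_{U,\overline{G}}=\{\phi\circ\Gamma_{\mathbf{v}}\colon\mathbf{v}\in V\}$ is obtained from $\mathcal{C}_{U,G}$ by post-composing each codeword with the single automorphism $\phi$. Viewing the elements of both codes as $\fq$-linear maps $\fqn\to W$, i.e.\ as matrices in $\fq^{(rn-k)\times n}$ with respect to a fixed basis of $W$, post-composition with $\phi$ is precisely left-multiplication by the matrix $X\in\mathrm{GL}(rn-k,q)$ representing $\phi$. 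Thus $\mathcal{C}_{U,\overline{G}}=\{X\,C\colon C\in\mathcal{C}_{U,G}\}$, which is the equivalence obtained by taking $Y=\mathrm{I}_n$, $Z=O$ and $\sigma=\mathrm{id}$ in the definition of equivalence of RM codes.

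There is no genuine obstacle in this argument; the only two points that require attention are the surjectivity of $G$ and $\overline{G}$, which rests on the dimension count $\dim_{\fq}(V/U)=rn-k=\dim_{\fq}(W)$, and the routine translation between post-composition of $\fq$-linear maps and left-multiplication by the corresponding matrix. (Note that $\iota$, and hence the hypothesis $\iota<n$ guaranteeing that both codes are defined, depends only on $U$ and not on the chosen map.)
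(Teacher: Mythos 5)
Your proof is correct and takes essentially the same approach as the paper: both arguments construct an invertible $\fq$-linear map $\phi$ on $W$ with $\phi\circ G=\overline{G}$ and then conclude that $\mathcal{C}_{U,\overline{G}}=\{\phi\circ\Gamma_{\mathbf{v}}\colon \mathbf{v}\in V\}$ is equivalent to $\mathcal{C}_{U,G}$ via left multiplication by an invertible matrix, with trivial right factor and trivial field automorphism. The only (cosmetic) difference is how $\phi$ is produced — you factor both maps through $V/U$ via the first isomorphism theorem, whereas the paper extends an $\fq$-basis of $U$ to a basis of $V$ and matches the images of the complementary basis vectors; your route is arguably tidier, as it sidesteps the paper's slight abuse of notation in writing the intertwining map as $L\colon V\to V$ when it really acts on $W$.
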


\begin{proof}
Let $B_U \cup\{\mathbf{w}_1,\ldots,\mathbf{w}_{rn-k}\}$ be an $\fq$-basis of $V$ such that $B_U$ is an $\fq$-basis of $U$.
Clearly, $G(\mathbf{w}_1),\ldots,G(\mathbf{w}_{rn-k})$ are $\fq$-linearly independent, as well as $\overline{G}(\mathbf{w}_1),\ldots,\overline{G}(\mathbf{w}_{rn-k})$.
Then there exists an invertible $\fq$-linear map $L:V\to V$ such that $L(U)=U$ and $L(G(\w_i))=\overline{G}(\w_i)$ for every $i=1,\ldots,rn-k$, i.e. $L\circ G=\overline{G}$.
Therefore, by choosing $R={\rm Id}_{\fqn}$ and $\sigma={\rm Id}_{{\rm Aut}(\fq)}$, we have
\[ L\circ\mathcal{C}_{U,G}^{\sigma}\circ R = \left\{L\circ(G\circ\tau_{\mathbf{v}})\colon \mathbf{v} \in V\right\} =\mathcal{C}_{U,\overline{G}}. \]
The claim is proved.
\end{proof}

We recall the following conjugacy property of Singer cycles of ${\rm GL}(n,q)$.

\begin{remark}\label{rem:singer}
The cyclic subgroups of $\mathrm{GL}(n,q)$ of order $q^n-1$ are called Singer cycles; it is well-known that any two Singer cycles $S_1=\langle g_1\rangle$ and $S_2=\langle g_2\rangle$ are conjugate in $\mathrm{GL}(n,q)$.

In fact, let $p_{g_1}(x)$ be the minimal polynomial of $g_1$ over $\mathbb{F}_q$, and $\gamma$ be a primitive element of $\fqn$ with minimal polynomial $p_{g_1}(x)$ over $\fq$.
The set $\overline{S}_1=S_1\cup\{\mathbf{0}\}$ is an $\fq$-subalgebra of $\fq^{n\times n}$, isomorphic to $\fqn$ by the $\fq$-linear map $\varphi$ mapping $(1,g_1,\ldots,g_1^{n-1})$ to $(1,\gamma,\ldots,\gamma^{n-1})$.
Also, $\overline{S}_1$ is a field of order $q^n$ and $\varphi$ is a field $\fq$-isomorphism.
The same holds for $\overline{S}_2=S_2\cup\{\mathbf{0}\}$, so that there exists a field $\fq$-isomorphism $\psi:\overline{S}_1\to\overline{S}_2$.
Therefore, there exists $\hat{\psi}\in\mathrm{GL}(n,q)$ which conjugates $S_1$ to $S_2$.
See also \cite[pag. 187]{Huppert} and \cite[Section 1.2.5 and Example 1.12]{Hiss}.
\end{remark}

Also the converse of Theorem \ref{th:MRDiff} holds, in the sense that any MRD code as in the claim of that theorem is equivalent to $\mathcal{C}_{U,G}$ for some $U$ as in the assumption of Theorem \ref{th:MRDiff}.

\begin{theorem}\label{th:MRDconverse}
Let $\mathcal{C}$ be an $\fq$-linear MRD code with parameters $(t,n,q;n-\iota)$ such that $t\geq n$ and $|R(\mathcal{C})|=q^n$, contained in ${\rm Hom}(\fqn,W)$ with $W=V(t,q)$.
Let $r=\dim_{R(\mathcal{C})}(\mathcal{C})$.
Then the following holds.
\begin{itemize}
    \item $\iota+1$ divides $rn$ and $t=\frac{rn}{\iota+1}$.
    \item $\mathcal{C}$ is equivalent to an $\fq$-linear MRD code $\mathcal{C}^\prime$ such that $R(\mathcal{C}^\prime)=\mathcal{F}_n$.
    \item The set \[ U=\{f\in\mathcal{C}^\prime\colon f(1)=0\}\subseteq\mathcal{C}^\prime \] is a $\frac{\iota rn}{\iota+1}$-dimensional $\mathcal{F}_{n,q}$-subspace of $\mathcal{C}^\prime$, and satisfies \footnote{Recall that $\langle f\rangle_{\mathcal{F}_n}=\{f\circ\omega_{\alpha}\colon\alpha\in\fqn\}$.}
    \begin{equation}\label{eq:iota} \max\left\{\dim_{\mathcal{F}_{n,q}}\left(U\cap\langle f\rangle_{\mathcal{F}_n}\right)\colon f\in\mathcal{C}^\prime\right\}=\iota. \end{equation}
    \item $\mathcal{C}^\prime$ is equal to $\mathcal{C}_{U,G}$, where $G:\mathcal{C}^\prime\to W$, $f\mapsto f(1)$.
\end{itemize}
\end{theorem}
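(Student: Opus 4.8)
The plan is to use the maximal right idealiser to endow the code itself with an $\fqn$-vector space structure, and thereby recognise $\mathcal{C}$, up to equivalence, as one of the codes $\mathcal{C}_{U,G}$ of Theorem~\ref{th:construction}. First I would settle the cardinality and the first assertion. Since $t\geq n$, Theorem~\ref{th:propertiesideal} shows that $R(\mathcal{C})$ is a field, and by hypothesis it has order $q^n$; hence $\mathcal{C}$ is an $r$-dimensional vector space over $R(\mathcal{C})$, so $|\mathcal{C}|=q^{rn}$. On the other hand $\mathcal{C}$ is MRD with $t\geq n$, whence $|\mathcal{C}|=q^{t(\iota+1)}$ by Theorem~\ref{th:Singleton}. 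Comparing exponents gives $t(\iota+1)=rn$, that is $(\iota+1)\mid rn$ and $t=\frac{rn}{\iota+1}$; in particular $t\geq n$ forces $r\geq\iota+1$, a fact I will reuse.

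To obtain the equivalent code $\mathcal{C}'$, observe that the nonzero elements of the field $R(\mathcal{C})$ form a Singer cycle of $\mathrm{GL}(n,q)$, as do those of $\mathcal{F}_n$; by Remark~\ref{rem:singer} the two are conjugate, so there is $y\in\mathrm{GL}(n,q)$ (acting on the domain $\fqn$) with $y^{-1}R(\mathcal{C})y=\mathcal{F}_n$. Setting $\mathcal{C}'=\{f\circ y\colon f\in\mathcal{C}\}$, which is equivalent to $\mathcal{C}$ via right multiplication by $y$, a direct computation gives $R(\mathcal{C}')=y^{-1}R(\mathcal{C})y=\mathcal{F}_n$, the second assertion.

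Next I would regard $V:=\mathcal{C}'$ as an $\fqn$-vector space through $\alpha\cdot f:=f\circ\omega_\alpha$, which is well defined precisely because $R(\mathcal{C}')=\mathcal{F}_n$; then $\dim_{\fqn}V=r$ and $\dim_{\fq}V=rn$. Put $G\colon V\to W$, $f\mapsto f(1)$ and $U=\ker G$; being a kernel, $U$ is automatically an $\mathcal{F}_{n,q}$-subspace, the $\mathcal{F}_{n,q}$-action being ordinary $\fq$-scalar multiplication. The computation $\Gamma_f(\lambda)=G(f\circ\omega_\lambda)=(f\circ\omega_\lambda)(1)=f(\lambda)$ shows $\Gamma_f=f$, so that $\mathcal{C}_{U,G}=\mathcal{C}'$ as soon as the construction of Theorem~\ref{th:construction} applies. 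To check this, note that for $f\in V^*$ the map $\alpha\mapsto f\circ\omega_\alpha$ is injective and carries $\ker f\subseteq\fqn$ onto $U\cap\langle f\rangle_{\mathcal{F}_n}$, whence $\dim_{\mathcal{F}_{n,q}}(U\cap\langle f\rangle_{\mathcal{F}_n})=\dim_{\fq}\ker f=n-\mathrm{rk}(f)$. Maximising over $f$ and using that the least nonzero rank in $\mathcal{C}'$ equals the minimum distance $n-\iota$ yields the value $\iota$ in~\eqref{eq:iota}; in particular $\iota<n$, so Theorem~\ref{th:construction} applies and gives $\mathcal{C}_{U,G}=\mathcal{C}'$.

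It remains to compute $\dim_{\fq}U=rn-\dim_{\fq}\mathrm{im}(G)$, i.e.\ to show that $G$ is onto $W$; this I expect to be the main obstacle, as it is the one place where the MRD hypothesis must be used beyond the bare cardinality count. Because $R(\mathcal{C}')=\mathcal{F}_n$, one has $\mathrm{im}(G)=\sum_{f\in\mathcal{C}'}\mathrm{im}(f)$. If this were a proper subspace of $\fq$-dimension $t_0<t$, then $\mathcal{C}'$ would be an RM code inside $\F_q^{t_0\times n}$ of cardinality $q^{rn}$ and minimum distance $n-\iota$, and Theorem~\ref{th:Singleton} would force $t_0\geq t$ when $t_0\geq n$, and $t_0\geq n$ when $t_0<n$ (here using $r\geq\iota+1$); both are absurd, so $G$ is surjective. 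Therefore $\dim_{\fq}U=rn-t=\frac{\iota rn}{\iota+1}$, and together with $\mathcal{C}_{U,G}=\mathcal{C}'$ this establishes all four assertions.
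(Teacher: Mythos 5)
Your proof is correct, and for the decisive step it takes a genuinely different route from the paper's. The first, second and fourth bullets, as well as \eqref{eq:iota}, are handled essentially as in the paper: the cardinality count $q^{rn}=q^{t(\iota+1)}$ against the Singleton bound, the Singer-cycle conjugacy of Remark \ref{rem:singer} giving a code $\mathcal{C}'$ with $R(\mathcal{C}')=\mathcal{F}_n$, the identification of $\ker f$ with $U\cap\langle f\rangle_{\mathcal{F}_n}$ via $\alpha\mapsto f\circ\omega_\alpha$, and the computation $\Gamma_f=f$ yielding $\mathcal{C}_{U,G}=\mathcal{C}'$. The genuine difference is the computation of $\dim_{\fq}(U)$. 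The paper partitions $U$ into the sets $U_i$ of elements with $i$-dimensional kernel, proves $A_{n-i}=|U_i|(q^n-1)/(q^i-1)$, and then evaluates $|U|$ through the MacWilliams-type identity of Theorem \ref{th:dualrelations} with $\nu=1$, obtaining $|U|=q^{\iota rn/(\iota+1)}$. You instead prove that the evaluation map $G\colon f\mapsto f(1)$ is surjective: since $R(\mathcal{C}')=\mathcal{F}_n$ gives $\mathrm{im}(G)=\sum_{f\in\mathcal{C}'}\mathrm{im}(f)$, a proper image of $\fq$-dimension $t_0<t$ would place the code, with its $q^{rn}$ elements and minimum distance $n-\iota$ unchanged, inside $\F_q^{t_0\times n}$, contradicting Theorem \ref{th:Singleton} both when $t_0\geq n$ and when $t_0<n$ (the latter case using $r\geq\iota+1$); then $\dim_{\fq}(U)=rn-t=\frac{\iota rn}{\iota+1}$. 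Your argument is more elementary, relying only on the Singleton bound, and it makes transparent that the MRD property forces surjectivity of evaluation; the paper's argument requires the dual rank-distribution machinery but yields finer information along the way, namely the exact number of elements of $U$ of each rank, in the same spirit as the weight-distribution results of Section \ref{sec:h+1weights}. One minor point: your appeal to Theorem \ref{th:propertiesideal} to see that $R(\mathcal{C})$ is a field formally requires minimum distance $n-\iota>1$; this is immaterial here, since the hypothesis $r=\dim_{R(\mathcal{C})}(\mathcal{C})$ already presupposes the module structure that the paper likewise uses without comment.
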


\begin{proof}
Since $|\mathcal{C}|=q^{rn}$ and $t\geq n$, the Singleton-like bound of Theorem \ref{th:Singleton} reads $rn\leq t(n-(n-\iota)+1)$. As $\mathcal{C}$ is MRD, this implies that $\iota+1$ divides $t$, and $t=\frac{rn}{\iota+1}$.

Since $R(\mathcal{C})\setminus\{\mathbf{0}\}$ and $\mathcal{F}_n\setminus\{\omega_0\}$ are Singer
cycles of $\mathrm{GL}(n,q)$, there exists by Remark \ref{rem:singer} an invertible $\mathbb{F}_q$-linear map $H\colon \mathbb{F}_{q^n}\rightarrow\mathbb{F}_{q^n}$ such that
$R(\mathcal{C})= H\circ\mathcal{F}_n\circ H^{-1}$. Thus, $\mathcal{C}^\prime = \mathcal{C}\circ H$.

Clearly, $U$ is an $\mathcal{F}_{n,q}$-subspace of $\mathcal{C}^\prime$. 
For every $i\in\{1,\ldots,\iota\}$, we determine the size of $U_i=\{ f \in U \colon \dim_{\F_q}(\ker f)=i \}$.
Let $g \in \mathcal{C}'$ be such that $\dim_{\F_q}(\ker g)=i$.
As $\dim_{\F_q}(\ker g)>0$, there exists $\alpha \in \F_{q^n}^*$ such that $g(\alpha)=0$, that is $g\circ \omega_\alpha (1)=0$. As $\C'$ is a right vector space over $\mathcal{F}_n$, it follows that $g\circ \omega_\alpha \in \C'$ and, in particular, $g\circ \omega_\alpha \in U_i$.
This implies that
\[ \{ f \circ \omega_\alpha \colon f \in U_i,\,\,\alpha \in \F_{q^n}^* \} \]
coincides with the set of all the elements in $\C'$ of rank $n-i$.
Also, for any $f \in U_i$ and $\alpha\in\fqn$, we have $f \circ \omega_\alpha \in U_i$ if and only if $\alpha \in \ker f$. Thus,
\[ A_{n-i}= \frac{|U_i| (q^n-1)}{q^i-1}. \]
By Lemma \ref{lemma:complete weight} $A_{n-\iota}\ne0$, and \eqref{eq:iota} follows. Furthermore,
\[ |U|=1+ A_{n-1}\frac{q-1}{q^n-1}+\ldots+A_{n-\iota}\frac{q^\iota -1}{q-1}, \]
i.e.
\[ (q^n-1)(|U|-1)= A_{n-1}(q-1)+\ldots+A_{n-\iota}(q^\iota -1). \]
By Theorem \ref{th:dualrelations} applied to $\mathcal{C}'$ with $\nu=1$, we get \[ A_{n-1}(q-1)+\ldots+A_{n-\iota}(q^\iota -1)=(q^n-1)(q^{\frac{\iota rn}{\iota+1}}-1),\]
whence $\dim_{\mathcal{F}_{n,q}}(U)=\frac{\iota rn}{\iota +1}$.

Finally, choosing $G:\mathcal{C}^\prime\to W$, $f\mapsto f(1)$ and recalling that $\tau_{f}\colon \F_{q^n}\rightarrow \C'$, $\alpha \mapsto f\circ \omega_{\alpha}$ for any $f \in \C^\prime$, we obtain $\C'=\C_{U,G}$. 
\end{proof}


\noindent Theorems \ref{th:MRDiff} and \ref{th:MRDconverse} provide a correspondence between:
\begin{itemize}
    \item $\fq$-subspaces $U=V(\frac{\iota rn}{\iota+1},q)$ of $V=V(r,q^n)$ such that $\iota=\max\{\dim_{\fq}(U\cap\langle\mathbf{v}\rangle_{\fqn})\colon \mathbf{v}\in V^*\}$; and
    \item $\fq$-linear MRD codes $\mathcal{C}$ with parameters $\left(\frac{rn}{\iota+1},n,q;n-\iota\right)$ and right idealiser isomorphic to $\fqn$.
\end{itemize}

\noindent When $W=\F_{q^{{nr}/{(\iota+1)}}}$ and $R(\mathcal{C})=\mathcal{F}_n$, Theorem \ref{th:MRDconverse} reads as follows.

\begin{corollary}
Let $\iota,r,n$ be positive integers such that $\iota<n$, $\,\iota<r$ and $(\iota+1)\mid rn$.
Let $f_1,\ldots,f_r\colon \fqn \rightarrow \F_{q^{{nr}/{(\iota+1)}}}$ be $\mathcal{F}_n$-linearly independent (on the right) $\fq$-linear maps.
Then the RM code 
\[\C_{f_1,\ldots,f_r}=\{f_1\circ \omega_{\alpha_1}+\ldots+f_r\circ \omega_{\alpha_r} \colon \alpha_1,\ldots,\alpha_r \in \fqn\}\]
is an MRD code if and only if \[ \dim_{\fq} (\ker(f_1\circ \omega_{\alpha_1}+\ldots+f_r\circ \omega_{\alpha_r})) \leq \iota \]
for every $\alpha_1,\ldots,\alpha_r \in \fqn$.
In this case, $\mathcal{C}_{f_1,\ldots,f_r}$ has parameters $\left(\frac{rn}{\iota+1},n,q;n-\iota\right)$ and $R(\C_{f_1,\ldots,f_r})=\mathcal{F}_n$. Also, the $\mathcal{F}_{n,q}$-subspace $U_{f_1,\ldots,f_r}$ of $C_{f_1,\ldots,f_r}$ given by
\[ U_{f_1\ldots,f_r}=\{ f_1\circ \omega_{\alpha_1}+\ldots+f_r\circ \omega_{\alpha_r} \in \C_{f_1,\ldots,f_r} \colon f_1(\alpha_1)+\ldots+f_r(\alpha_r)=0 \} \]
has dimension $\frac{\iota rn}{\iota +1}$, and $\iota=\max\{\dim_{\fq}(U_{f_1,\ldots,f_r}\cap\langle\mathbf{v}\rangle_{\fqn})\colon \mathbf{v}\in \C_{f_1,\ldots,f_r}^*\}$.
\end{corollary}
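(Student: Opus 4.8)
The plan is to recognize $\mathcal{C}_{f_1,\ldots,f_r}$ as one of the codes $\mathcal{C}_{U,G}$ of Theorem~\ref{th:MRDiff} and then read off every assertion through that dictionary. Write $\mathcal{C}=\mathcal{C}_{f_1,\ldots,f_r}$. Since $\left(\sum_i f_i\circ\omega_{\alpha_i}\right)\circ\omega_\beta=\sum_i f_i\circ\omega_{\alpha_i\beta}$, the set $\mathcal{C}$ is a right vector space over $\mathcal{F}_n\cong\fqn$; I regard it as an $\fqn$-space $V$ via $\alpha\cdot g:=g\circ\omega_\alpha$. The right $\mathcal{F}_n$-linear independence of $f_1,\ldots,f_r$ says precisely that $(\alpha_1,\ldots,\alpha_r)\mapsto\sum_i f_i\circ\omega_{\alpha_i}$ is injective, so $\dim_{\fqn}V=r$ and $\dim_{\fq}\mathcal{C}=rn$. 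Setting $G\colon V\to W$, $g\mapsto g(1)$, one has $\tau_g(\alpha)=g\circ\omega_\alpha$, whence $\Gamma_g=G\circ\tau_g$ satisfies $\Gamma_g(\alpha)=(g\circ\omega_\alpha)(1)=g(\alpha)$, i.e. $\Gamma_g=g$. Thus $\mathcal{C}_{U,G}=\mathcal{C}$ with $U=\ker G=U_{f_1,\ldots,f_r}$.

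Next I would translate the invariant governing Theorem~\ref{th:MRDiff}. For $v\in V^*$ one has $\langle v\rangle_{\fqn}=\{v\circ\omega_\alpha\colon\alpha\in\fqn\}$, and $v\circ\omega_\alpha\in U$ if and only if $v(\alpha)=0$; since $\alpha\mapsto v\circ\omega_\alpha$ is $\fq$-injective for $v\ne\mathbf{0}$, this gives $\dim_{\fq}(U\cap\langle v\rangle_{\fqn})=\dim_{\fq}\ker v$. Therefore the quantity $\iota_0:=\max\{\dim_{\fq}(U\cap\langle v\rangle_{\fqn})\colon v\in V^*\}$ of Theorem~\ref{th:MRDiff} equals $\max\{\dim_{\fq}\ker v\colon v\in\mathcal{C}^*\}$, which is $n$ minus the minimum distance of $\mathcal{C}$. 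In particular $\iota_0<n$, so the hypotheses of Theorems~\ref{th:construction} and~\ref{th:MRDiff} are met.

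For the equivalence, the point I would stress is that one inequality is automatic. Because $\iota<r$, the codomain dimension $m:=\frac{nr}{\iota+1}$ satisfies $m\geq n$, so $\min\{m,n\}=n$; feeding $|\mathcal{C}|=q^{rn}$ and $d=n-\iota_0$ into the Singleton-like bound of Theorem~\ref{th:Singleton} yields $rn\le m(\iota_0+1)=\frac{nr}{\iota+1}(\iota_0+1)$, i.e. $\iota_0\geq\iota$, with equality exactly when $\mathcal{C}$ is MRD. The displayed hypothesis $\dim_{\fq}\ker\!\left(\sum_i f_i\circ\omega_{\alpha_i}\right)\leq\iota$ for all $\alpha_i$ is nothing but $\iota_0\leq\iota$; combined with the automatic $\iota_0\geq\iota$ it forces $\iota_0=\iota$, so $\mathcal{C}$ is MRD, and conversely MRD gives $\iota_0=\iota\leq\iota$. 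This settles the ``if and only if''.

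Finally, in the MRD case $\iota_0=\iota$, and the remaining assertions follow from the dictionary already set up. The parameters $\left(\frac{rn}{\iota+1},n,q;n-\iota\right)$ and the identity $R(\mathcal{C})=\mathcal{F}_n$ come from Theorem~\ref{th:MRDiff} (the containment $R(\mathcal{C})\supseteq\mathcal{F}_n$ is the computation of the first paragraph, equality being Theorem~\ref{th:propertiesideal}). The value $\dim_{\mathcal{F}_{n,q}}U_{f_1,\ldots,f_r}=\frac{\iota rn}{\iota+1}$ is read off from Theorem~\ref{th:MRDconverse} applied to the MRD code $\mathcal{C}$, using that $\mathcal{F}_{n,q}\cong\fq$ so the two dimensions coincide; while the concluding equality $\max\{\dim_{\fq}(U_{f_1,\ldots,f_r}\cap\langle v\rangle_{\fqn})\colon v\in\mathcal{C}^*\}=\iota$ is just the statement $\iota_0=\iota$ of the third paragraph, rephrased via the second. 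The main obstacle is purely organizational: pinning down the identification $\mathcal{C}=\mathcal{C}_{U,G}$ correctly, and noticing that the Singleton bound makes $\iota_0\geq\iota$ free, so that the one-sided kernel hypothesis already characterizes the MRD property.
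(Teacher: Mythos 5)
Your proof is correct and takes essentially the same route as the paper: the corollary is stated there as a direct specialization of Theorems \ref{th:MRDiff} and \ref{th:MRDconverse} to the case $W=\F_{q^{nr/(\iota+1)}}$ and $R(\mathcal{C})=\mathcal{F}_n$, and your dictionary $\mathcal{C}_{f_1,\ldots,f_r}=\mathcal{C}_{U,G}$ together with the Singleton-bound counting is precisely that specialization (the same computation used in the proof of Theorem \ref{th:MRDiff}). Your remark that the Singleton bound makes $\iota_0\geq\iota$ automatic, so that the one-sided kernel hypothesis already forces equality, is a clean packaging of what the paper leaves implicit.
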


\begin{example}
Let $\iota,n,r$ be positive integers such that $\iota<n$ and $(\iota+1)\mid r$. Define $t=r/(\iota+1)$.
The code
\[ \mathcal{C}=\left\{x\in\F_{q^{nt}}\mapsto a_0 x+a_1 x^q+\ldots+a_{\iota}x^{q^\iota}\in\F_{q^{nt}}\;\colon\; a_0,\ldots,a_{\iota}\in\F_{q^{nt}}\right\} \]
is an MRD code with parameters $(nt,nt,q;nt-\iota)$, known as Gabidulin code; see Section \ref{sec:noGab} below.
Consider the code \[\mathcal{C}|_{\fqn}=\left\{f|_{\fqn}:\fqn\to\F_{q^{nt}}\colon f\in\mathcal{C}\right\}.\]
By Theorem \ref{th:punct}, $\mathcal{C}|_{\fqn}$ is an MRD code with parameters $(nt,n,q;n-\iota)$.
Also, $R(\mathcal{C}|_{\fqn})=\mathcal{F}_n$ and an $\mathcal{F}_n$-basis of $\mathcal{C}|_{\fqn}$ (seen as a right vector space) is
\[ \left\{ f_{j,i} \colon x\in\fqn\mapsto \xi^{i}x^{q^j}\in\F_{q^{nt}}\;\mid\; 0\leq i\leq t-1,\,0\leq j\leq \iota \right\}, \]
where $\{1,\xi,\ldots,\xi^{t-1}\}$ is an $\fqn$-basis of $\F_{q^{nt}}$.
Moreover, the set of the elements $f\in\mathcal{C}|_{\fqn}$ vanishing at $1$ is equal to
\[ U=\left\{x\in\fqn\mapsto -(a_1+\ldots+a_{\iota}) x+a_1 x^q+\ldots+a_{\iota}x^{q^\iota}\in\F_{q^{nt}}\;\colon\; a_1,\ldots,a_{\iota}\in\F_{q^{nt}} \right\}, \]
and $\iota=\max\left\{\dim_{\mathcal{F}_{n,q}}\left(U\cap\langle f\rangle_{\mathcal{F}_n}\right)\colon f\in\mathcal{C}|_{\fqn}^{\,*}\right\}$.
Let
\[B=\left(f_{j,i}\,\colon\, j=0,\ldots,\iota,\;i=0,\ldots,t-1\right).\]
The coordinates of a vector in $U$ with respect to $B$ are
\[ \left( -\sum_{k=1}^{\iota}a_{k,0}\;,\ldots,-\sum_{k=1}^{\iota}a_{k,0}\;,\;a_{1,0}^{q^{n-1}},\ldots,a_{1,t-1}^{q^{n-1}}\;,\;\ldots\ldots,\;a_{\iota,0}^{q^{n-\iota}},\ldots,a_{\iota,t-1}^{q^{n-\iota}} \right), \]
where $a_{k,i}\in \fqn$ are such that $a_k=\sum_{i=0}^{t-1}a_{k,i}\xi^{i}$. Denote by $\overline{U}$ the set of the coordinates of the vectors in $U$.
Let $\sigma^{\prime}:\mathbb{F}_{q^n}^{t(\iota+1)}\times \mathbb{F}_{q^n}^{t(\iota+1)}\to\mathbb{F}_q$, $(\mathbf{u},\mathbf{v})\mapsto\mathrm{Tr}_{q^n/q}(\langle\mathbf{u},\mathbf{v}\rangle)$, where $\langle\cdot,\cdot\rangle$ is the standard inner product. Then the vectors of $\overline{U}^{\perp^\prime}$ are
\[ (y_0,\ldots,y_{t-1},y_0^{q^{n-1}},\ldots,y_{t-1}^{q^{n-1}},\ldots \ldots, y_0^{q^{n-\iota}},\ldots,y_{t-1}^{q^{n-\iota}}), \]
where $y_0,\ldots,y_{t-1}\in\mathbb{F}_{q^n}$.
Note that $\overline{U}^{\perp^\prime}$ is the direct sum of $t$ copies of
\[ \{ (z,z^{q},\ldots,z^{q^\iota})\colon z\in\mathbb{F}_{q^n} \} \]
which is a $\iota$-scattered $\fq$-subspace of $\fqn^{\iota+1}$. 

Therefore, when $\iota+1$ divides $r$, the restriction to $\fqn$ of a Gabidulin code is associated with the direct sum of $t$ copies of a $\iota$-scattered $\fq$-subspace of $\fqn^{\iota+1}$ (which is a $\iota$-scattered $\fq$-subspace of $\fqn^r$); see Section \ref{sec:twocharact}.
\end{example}

\section{Previously known connections}\label{sec:uni}

In this section, we show that the connection between $\fq$-vector spaces and $\fq$-linear MRD codes established in Section \ref{sec:subMRD} generalizes those presented in \cite{Sheekey,ShVdV,Lunardon2017,CSMPZ2016}.

\subsection{Sheekey's connection}\label{sec:sheekey}

The first connection was pointed out by Sheekey in its seminal paper \cite{Sheekey}.
Let $U$ be an $\fq$-subspace of $\fqn\times\fqn$, so that \[U=U_{f_1,f_2}=\{(f_1(x),f_2(x))\colon x\in\fqn\}\]
for some $f_1(x),f_2(x)$ in $\mathcal{L}_{n,q}$. Consider the $\fq$-linear RM code
\[\mathcal{S}_{f_1,f_2}=\{a_1 f_1(x)+a_2 f_2(x)\colon a_1,a_2\in\fqn\}\subset\mathcal{L}_{n,q},  \]
whose left idealiser is isomorphic to $\fqn$.
Then $U_{f_1,f_2}$ is a maximum scattered $\fq$-subspace of $\fqn\times\fqn$ if and only if $\mathcal{S}_{f_1,f_2}$ is an MRD code with parameters $(n,n,q;n-1)$; see \cite[Section 5]{Sheekey}.

\subsection{A generalization to maximum $(r-1)$-scattered $\fq$-subspaces of $\fqn^r$}\label{sec:r-1}

Sheekey's connection was extended by Sheekey and Van de Voorde in \cite{ShVdV} as follows; see also \cite{Lunardon2017}.
Let $U$ be an $\fq$-subspace of $\fqn^r$, so that
\[ U=U_{f_1,\ldots,f_r}=\{(f_1(x),\ldots,f_r(x))\colon x\in\fqn\} \]
for some $f_1(x),\ldots,f_r(x)\in\mathcal{L}_{n,q}$, and consider the $\fq$-linear RM code
\begin{equation}\label{eq:Cf1fr} \mathcal{S}_{f_1,\ldots,f_r}=\{a_1 f_1(x)+\cdots+a_r f_r(x)\colon a_1,\ldots,a_r\in\fqn\}, \end{equation}
whose left idealiser is isomorphic to $\fqn$.
Then $U_{f_1,\ldots,f_r}$ is a maximum $(r-1)$-scattered $\fq$-subspace of $\fqn^r$ if and only if $\mathcal{S}_{f_1,\ldots,f_r}$ is an MRD code with parameters $(n,n,q;n-r+1)$; see \cite[Corollary 5.7]{ShVdV}.
Clearly, when $r=2$ this connection coincides with the one of Section \ref{sec:sheekey}.

\subsection{A generalization to maximum scattered $\fq$-subspaces}\label{sec:JACO}

Sheekey's connection was extended by Csajb\'ok, Marino, Polverino and the last author in \cite{CSMPZ2016} by considering maximum scattered $\fq$-subspaces of $V=V(r,q^n)$ for any $r\geq2$ with $rn$ even; see \cite[Theorem 3.2]{CSMPZ2016}.

Let $U=V(\frac{rn}{2},q)$ be an $\fq$-subspace of $V$, $W=V(\frac{rn}{2},q)$, $G:V\to W$ be an $\fq$-linear map with $\ker(G)=U$, and $\iota=\max\{\dim_{\fq}(U\cap\langle\mathbf{v}\rangle_{\fqn})\colon \mathbf{v}\in V^*\}$ with $\iota<n$.
Then $\mathcal{C}_{U,G}=\{\Gamma_{\mathbf{v}}\colon \mathbf{v}\in V\}$ is an $\fq$-linear RM code of dimension $rn$ with parameters $(\frac{rn}{2},n,q;n-\iota)$.
Moreover, $U$ is a maximum scattered $\fq$-subspace of $V$ if and only if $\mathcal{C}_{U,G}$ is an MRD code.
In this case, the right idealiser of $\mathcal{C}_{U,G}$ is isomorphic to $\fqn$.

Conversely, in \cite{PZ} the authors prove that any $\fq$-linear MRD code with parameters $(\frac{rn}{2},n,q;n-1)$ and right idealiser isomorphic to $\fqn$ is equivalent to an MRD code $\mathcal{C}^\prime$ containing a maximum scattered $\fq$-subspace $U$ such that $\mathcal{C}^\prime=\mathcal{C}_{U,G}$ with $G:\mathcal{C}^\prime\to W$, $f\mapsto f(1)$; see \cite[Theorem 4.7]{PZ}.

This family contains the adjoint codes of the codes $\mathcal{S}_{f_1,f_2}$ presented in Section \ref{sec:sheekey}; see \cite[Example 3.5]{CSMPZ2016}.


\subsection{A unified connection}

When $\iota=1$ and $U$ is a maximum scattered $\fq$-subspace of $V=V(r,q^n)$, the connection established in Theorems \ref{th:construction} and \ref{th:MRDiff} coincides with the one of  Section \ref{sec:JACO}, and hence generalizes the one of Section \ref{sec:sheekey}.
Also, Theorem \ref{th:MRDconverse} extends the result of \cite{PZ}.

When $\iota=r-1$ and $U$ is a maximum $(r-1)$-scattered $\fq$-subspace of $V=V(r,q^n)$, our connection contains the adjoint codes of the MRD codes provided in Section \ref{sec:r-1}.
Indeed, let $\mathcal{C}$ be as in Equation \eqref{eq:Cf1fr} with parameters $(n,n,q;n-r+1)$ and left idealiser isomorphic to $\fqn$.
By Theorem \ref{th:propertiesideal}, the adjoint code $\mathcal{C}^\top$ is MRD with parameters $(n,n,q;n-r+1)$ and right idealiser isomorphic to $\fqn$.
Thus, by Theorem \ref{th:MRDconverse}, $\mathcal{C}^\top$ is equivalent to $\mathcal{C}_{U,G}$ for some $U$ and $G$.

\section{Two characterizations of  $h$-scattered subspaces}\label{sec:twocharact}

The $\fq$-subspaces $U$ of $V=V(r,q^n)$ defining an MRD code with parameters $(\frac{rn}{h+1},n,q;n-h)$ and right idealiser isomorphic to $\fqn$ are exactly those of dimension $\frac{hrn}{h+1}$ such that $h=\max\{\dim_{\fq}(U\cap\langle\mathbf{v}\rangle_{\fqn})\colon\mathbf{v}\in V^*\}$.
Examples of such $U$'s are provided by the ordinary duals of $\frac{rn}{h+1}$-dimensional $h$-scattered $\fq$-subspaces of $V$, for which several constructions are known; see \cite{CsMPZ,NPZZ}.
We prove that, whenever $n\geq h+3$, such $U$'s are exactly the ordinary duals of $\frac{rn}{h+1}$-dimensional $h$-scattered $\fq$-subspaces of $V$.
To this aim we provide two characterizations of these objects, namely Corollaries \ref{cor:caratterizzazione} and \ref{cor:caratterizzazione2}, by means of ordinary and Delsarte dualities.

\begin{theorem}\label{th:condhyper}
Let $r,n,h,k$ be positive integers such that $n\geq h+3$ and $k>r$.
Let $U$ be a $k$-dimensional $\fq$-subspace of $V=V(r,q^n)$ such that
\begin{equation}\label{eq:intersection} \dim_{\fq}(H\cap U)\leq k-n+h 
\end{equation}
for every $(r-1)$-dimensional $\fqn$-subspace $H$ of $V$.
Let $\Gamma,\V,\perp,\perp_D$ be as in Section \ref{sec:Delsarteduality}.
Then $U^{\perp_D}$ is an  $(n-h-2)$-scattered subspace of $\V/\Gamma^\perp$.
\end{theorem}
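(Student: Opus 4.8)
The plan is to push everything onto the common subgeometry $W$ and then read the hyperplane hypothesis \eqref{eq:intersection} as a lower bound on a ``coordinate rank'', so that the scatteredness of $U^{\perp_D}$ becomes an application of the Singleton bound (Theorem \ref{th:Singleton}).

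First I would fix the embedding $\V=V\oplus\Gamma$, the $k$-dimensional subgeometry $W$ with $\langle W\rangle_\fqn=\V$ and $W\cap\Gamma=\{\mathbf 0\}$, and the form $\beta$ as in Section \ref{sec:Delsarteduality}. Since $n\geq h+3$ forces $k-n+h\leq k-3<k-1$, the bound \eqref{eq:intersection} guarantees the hypothesis of Definition \ref{deffff}, so $U^{\perp_D}=(W+\Gamma^\perp)/\Gamma^\perp$ is well defined and $W\cap\Gamma^\perp=\{\mathbf 0\}$. Using the modular law together with $W\cap\Gamma=\{\mathbf 0\}$ and $W\cap\Gamma^\perp=\{\mathbf 0\}$, I would record the two dictionaries $\dim_\fq(U\cap H)=\dim_\fq(W\cap M)$ for the hyperplane $M\supseteq\Gamma$ of $\V$ corresponding to an $(r-1)$-dimensional $H\subseteq V$, and $\dim_\fq(U^{\perp_D}\cap \bar T)=\dim_\fq(W\cap T)$ for the $\fqn$-subspace $T\supseteq\Gamma^\perp$ corresponding to $\bar T\subseteq\V/\Gamma^\perp$. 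The spanning condition $\langle U^{\perp_D}\rangle_\fqn=\V/\Gamma^\perp$ is immediate from $\langle W\rangle_\fqn=\V$.

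Next I would reformulate the hypothesis. For $\mathbf p\in\V$ set $\rho(\mathbf p)=\dim_\fq\{\beta(w,\mathbf p)\colon w\in W\}$; since $\beta'=\beta|_{W\times W}$ is nondegenerate and $\fq$-valued, one gets $\dim_\fq(W\cap\mathbf p^\perp)=k-\rho(\mathbf p)$. As the hyperplanes $M\supseteq\Gamma$ are exactly the $\mathbf p^\perp$ with $\mathbf 0\neq\mathbf p\in\Gamma^\perp$, the bound \eqref{eq:intersection} is equivalent to $\rho(\mathbf p)\geq n-h$ for every nonzero $\mathbf p\in\Gamma^\perp$. Writing an element $\mathbf p=\sum_{i=1}^{s}\lambda_i\mathbf b_i$ of $S^*:=\langle S\rangle_\fqn$ in an $\fq$-basis $\mathbf b_1,\dots,\mathbf b_s$ of an $\fq$-subspace $S\subseteq W$, nondegeneracy of $\beta'$ gives $\rho(\mathbf p)=\dim_\fq\langle\lambda_1,\dots,\lambda_s\rangle_\fq$ (up to a field automorphism, immaterial for $\fq$-dimensions); that is, $\rho(\mathbf p)$ is the rank of $(\lambda_1,\dots,\lambda_s)$ seen as an $n\times s$ matrix over $\fq$.

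It then remains to bound, for every $T\supseteq\Gamma^\perp$ with $\dim_\fqn T=r+n-h-2$, the quantity $s:=\dim_\fq(W\cap T)$ by $n-h-2$, and this is the crux. Set $S=W\cap T$ and $S^*=\langle S\rangle_\fqn$, so $\dim_\fqn S^*=s$ and $S^*\subseteq T$; by the modular law the subspace $\Lambda:=S^*\cap\Gamma^\perp$ has $e:=\dim_\fqn\Lambda\geq s+r-\dim_\fqn T=s-n+h+2$. Through the coordinate isomorphism $S^*\cong\fqn^{\,s}$ above, $\Lambda$ becomes an $\fq$-linear rank metric code in $\fq^{\,n\times s}$ of size $q^{en}$ all of whose nonzero elements have rank $\rho\geq n-h$, precisely because $\Lambda\subseteq\Gamma^\perp$. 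Applying Theorem \ref{th:Singleton} and distinguishing $s\leq n$ from $s\geq n$, in either case the resulting inequality contradicts $e\geq s-n+h+2$ as soon as $s\geq n-h-1$ (this is where $n\geq h+3$, i.e. $n-h-1\geq 1$, is used). Hence $s\leq n-h-2=\dim_\fqn\bar T$, which is exactly the $(n-h-2)$-scatteredness of $U^{\perp_D}$. The hard part will be this final step: recognizing $S^*\cap\Gamma^\perp$ as a rank metric code and balancing the Singleton bound against the modular-law estimate for $e$; everything preceding it is bookkeeping through the ordinary and Delsarte dualities.
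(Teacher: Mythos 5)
Your proposal is correct, but the engine of the argument is genuinely different from the paper's. The preparatory part coincides: your two dictionaries are exactly the paper's identities $\dim_{\fq}(N\cap W)=\dim_{\fq}(M\cap U^{\perp_D})$ and $\dim_{\fq}(\hat{T}\cap U)=\dim_{\fq}(T\cap W)$, both resting on the modular-law identity $T\cap\langle W,\Gamma\rangle_{\fq}=\langle\Gamma,T\cap W\rangle_{\fq}$. Where you diverge is the contradiction step. The paper never leaves the subspace lattice: from an $(n-h-1)$-dimensional $S\subseteq N\cap W$ it deduces $N^\perp\subseteq (S^*)^\perp\cap\Gamma$, hence finds an $\fqn$-hyperplane $T\supseteq\langle (S^*)^\perp,\Gamma\rangle_{\fqn}$ of $\V$, and then the compatibility $S^{\perp'}=W\cap(S^*)^\perp$ forces $\dim_{\fq}((T\cap V)\cap U)\geq k-n+h+1$, violating \eqref{eq:intersection} for the single constructed hyperplane $T\cap V$. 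You instead recast \eqref{eq:intersection} as the rank condition $\rho(\mathbf{p})\geq n-h$ on all of $\Gamma^\perp\setminus\{\mathbf{0}\}$, view $\Lambda=S^*\cap\Gamma^\perp$ as an $\fq$-linear rank-metric code in $\fq^{n\times s}$ with $q^{en}$ codewords and minimum distance at least $n-h$, and let Theorem \ref{th:Singleton} fight the Grassmann estimate $e\geq s-n+h+2$. I checked the two cases: for $n-h-1\leq s\leq n$, Singleton gives $e\leq s-n+h+1$ (or $|\Lambda|\leq 1$ when $s<n-h$, against $e\geq 1$), and for $s\geq n$ it gives $en\leq s(h+1)$, whence $s\leq n(n-h-2)/(n-h-1)<n$; both are contradictions, so the argument closes. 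The paper's route buys brevity and self-containedness: pure duality and dimension counting, with the hypothesis invoked only once, for one explicit hyperplane. Your route buys conceptual unity with the rest of the paper: it exposes the hyperplane condition \eqref{eq:intersection} as an MRD-type condition on $\Gamma^\perp$, very much in the spirit of Theorem \ref{th:MRDiff} and Corollary \ref{cor:caratterizzazione2}, and uses the hypothesis globally, at the price of importing the coding-theoretic Singleton bound. Two details to make explicit in a write-up, though neither is a gap: that $\fq$-independent vectors of $W$ are $\fqn$-independent (cite \cite{Lun99}, as the paper does), which underlies your coordinate isomorphism $S^*\cong\fqn^{\,s}$; and the rank-nullity step $\dim_{\fq}(W\cap\mathbf{p}^\perp)=k-\rho(\mathbf{p})$, which requires observing that $w\mapsto\beta(w,\mathbf{p})$ is $\fq$-linear (or semilinear) on $W$.
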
 

\begin{proof}
As noted in Section \ref{sec:Delsarteduality}, there exist $\V=V(k,q^n)$, an $\fqn$-subspace $\Gamma=V(k-r,q^n)$ of $\V$, and an $\fq$-subspace $W=V(k,q)$ of $\V$ such that $\V=V\oplus\Gamma$, $\langle W\rangle_{\fqn}=\V$, $W\cap\Gamma=\{\mathbf{0}\}$, and $U=\langle W,\Gamma\rangle_{\fq}\cap V$.

Let $\perp^\prime$ and $\perp$ be the orthogonal complement maps which act respectively on the $\fq$-subspaces of $W$ and on the $\fqn$-subspaces of $\V$, which are defined by non-degenerate reflexive sesquilinear forms $\beta^\prime :W\times W\to\fq$ and $\beta:\V\times\V\to\fqn$ respectively, such that $\beta$ coincides with $\beta^\prime$ on $W\times W$.

Since $n\geq h+3$, $k>r$ and \eqref{eq:intersection} holds, the Delsarte duality can be applied to $U$.
Then $U^{\perp_D}=W+\Gamma^{\perp}$ is a $k$-dimensional $\fq$-subspace of $\V /\Gamma^{\perp}$.

Suppose that there exists an $(n-h-2)$-dimensional subspace $M$ of $\V/\Gamma^\perp$ such that $\dim_{\fq}(M\cap U^{\perp_D})\geq n-h-1$. Write $M=N+\Gamma^\perp$, where $N$ is an $(n-h-2+r)$-dimensional subspace of $\V$ satisfying $\Gamma^\perp \subseteq N$, so that
\[ \dim_{\fq}(N\cap W)=\dim_{\fq}(M\cap U^{\perp_D})\geq n-h-1. \]
Let $S$ be an $(n-h-1)$-dimensional $\fq$-subspace of $N\cap W$.
As $S\subseteq W$, we have $\dim_{\fqn}(S^*)=\dim_{\fq}(S)$; see \cite[Lemma 1]{Lun99}.
Since $N$ contains both $S$ and $\Gamma^\perp$, we have $N^\perp\subseteq (S^*)^\perp \cap \Gamma$, whence
\[ \dim_{\fqn}((S^*)^\perp \cap \Gamma)\geq \dim_{\fqn}(N^\perp) = k-(n-h-2+r). \]
This implies that $\langle(S^*)^\perp,\Gamma\rangle_{\fqn}$ is contained in an $\fqn$-subspace $T$ of $\V$ of dimension $k-1$.

Let $\hat{T}=T\cap V$.
As $T$ contains $\Gamma$, we have $\dim_{\fqn}(\hat{T})=r-1$.
Using $U=\langle W,\Gamma\rangle_{\fq}\cap V$ and $T\cap\langle W,\Gamma\rangle_{\fq}=\langle\Gamma,T\cap W\rangle_{\fq}$, we obtain
\[ \dim_{\fq}(\hat{T}\cap U)=\dim_{\fq}(T\cap W). \]
As $S^{\perp^\prime}= W\cap (S^*)^\perp\subseteq W\cap T$ and $\dim_{\fq}(S^{\perp^{\prime}})=k-(n-h-1)$, we obtain
\[ \dim_{\fq}(\hat{T}\cap U)\geq k-n+h+1, \]
a contradiction to \eqref{eq:intersection}.
Therefore $U^{\perp_D}$ is an $(n-h-2)$-scattered $\fq$-subspace of $\V/\Gamma^\perp$.
\end{proof}

\noindent Note that Theorem \ref{th:condhyper} can also be obtained as a consequence of \cite[Theorem 3.5]{BCsMT}.
By Theorem \ref{th:condhyper}, the following characterization is obtained.

\begin{corollary}\label{cor:caratterizzazione}
Let $r,n,h$ be positive integers such that $h+1$ divides $rn$ and $n\geq h+3$.
Let $U$ be an $\frac{rn}{h+1}$-dimensional $\fq$-subspace of $V=V(r,q^n)$.
Then $U$ is an $\frac{rn}{h+1}$-dimensional $h$-scattered $\fq$-subspace of $V$ if and only if  
\begin{equation}\label{eq:intermax} \dim_{\fq}(H\cap U)\leq\frac{rn}{h+1}-n+h 
\end{equation}
for every $(r-1)$-dimensional $\fqn$-subspace $H$ of $V$.
\end{corollary}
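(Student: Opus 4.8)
The plan is to prove the equivalence through the Delsarte duality, exploiting Theorem \ref{th:condhyper} together with the properties of $\perp_D$ recorded in Proposition \ref{prop:property}. First I would verify that the hypotheses of the Delsarte construction are met: since $k=\frac{rn}{h+1}$ and $h\geq1$ we have $k>r$ (because $h+1\geq2$ forces $\frac{rn}{h+1}\leq\frac{rn}{2}$ is not automatically $>r$, so I must use $n\geq h+3$ more carefully — see the obstacle below), and likewise the condition $\dim_{\fq}(M\cap U)<k-1$ for every $(r-1)$-dimensional $\fqn$-subspace $M$ must be checked so that $U^{\perp_D}$ is well defined. With the dimension $k=\frac{rn}{h+1}$ substituted, the intersection bound \eqref{eq:intermax} is exactly the specialization of \eqref{eq:intersection}, so the two theorems talk about the same inequality.

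For the forward direction, suppose $U$ is $\frac{rn}{h+1}$-dimensional and $h$-scattered. Then Theorem \ref{th:inter} directly gives $\dim_{\fq}(U\cap H)\leq\frac{rn}{h+1}-n+h$ for every $(r-1)$-dimensional $\fqn$-subspace $H$, which is precisely \eqref{eq:intermax}. This direction is immediate from the quoted bound and requires no duality at all.

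For the converse, assume \eqref{eq:intermax} holds. This is the instance of \eqref{eq:intersection} with $k=\frac{rn}{h+1}$, so Theorem \ref{th:condhyper} applies and yields that $U^{\perp_D}$ is an $(n-h-2)$-scattered $\fq$-subspace of $\V/\Gamma^\perp=V(k-r,q^n)$. The idea is now to feed this back through the second bullet of Proposition \ref{prop:property}: that statement asserts, under $n\geq h+3$, that $U$ is $\frac{rn}{h+1}$-dimensional $h$-scattered \emph{if and only if} $U^{\perp_D}$ is $\frac{rn}{h+1}$-dimensional $(n-h-2)$-scattered. Thus, once I have established that $U^{\perp_D}$ is genuinely $(n-h-2)$-scattered of the correct dimension $\frac{rn}{h+1}=\frac{(k-r)n}{(n-h-2)+1}$ (which is the maximum dimension for an $(n-h-2)$-scattered subspace of $V(k-r,q^n)$, as one checks by substituting $k-r=\frac{rn}{h+1}-r=\frac{r(n-h-1)}{h+1}$ and simplifying), the ``if'' part of Proposition \ref{prop:property} returns that $U$ is $h$-scattered, completing the equivalence.

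The main obstacle I anticipate is purely bookkeeping but genuinely necessary: one must confirm that the dimension $\frac{rn}{h+1}$ of $U^{\perp_D}$ really is the \emph{maximum} $\frac{(k-r)n}{n-h-1}$ for $(n-h-2)$-scattered subspaces of $\V/\Gamma^\perp$, so that Proposition \ref{prop:property} can be invoked with matching dimensional hypotheses. The computation is $k-r=\frac{rn-r(h+1)}{h+1}=\frac{r(n-h-1)}{h+1}$, whence $\frac{(k-r)n}{n-h-1}=\frac{rn}{h+1}$, which indeed equals $k=\dim_{\fq}U^{\perp_D}$; I would also record that $n\geq h+3$ guarantees $n-h-2\geq1$, so that $(n-h-2)$-scatteredness is a meaningful (non-degenerate) condition and that $k>r$, legitimizing the whole Delsarte apparatus. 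Verifying that the codimension-one intersection hypothesis of Definition \ref{deffff} is implied by \eqref{eq:intermax} (since $\frac{rn}{h+1}-n+h<k-1$ for $n\geq h+3$) is the last routine check.
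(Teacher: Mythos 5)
Your proposal is correct and follows essentially the same route as the paper's proof: the forward implication directly from Theorem \ref{th:inter}, and the converse by applying Theorem \ref{th:condhyper} to conclude that $U^{\perp_D}$ is $(n-h-2)$-scattered and then invoking the second bullet of Proposition \ref{prop:property}. Your extra bookkeeping (verifying $k>r$, the hypothesis of Definition \ref{deffff}, and the dimension count $\frac{(k-r)n}{n-h-1}=\frac{rn}{h+1}$) is accurate, though the paper leaves these checks implicit.
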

\begin{proof}
Assume that \eqref{eq:intermax} holds.
By Theorem \ref{th:condhyper}, $U^{\perp_D}$ is a $\frac{rn}{h+1}$-dimensional $(n-h-2)$-scattered $\fq$-subspace in $\V/\Gamma^\perp$.
By Proposition \ref{prop:property}, $U$ is a $\frac{rn}{h+1}$-dimensional $h$-scattered $\fq$-subspace of $V$.
The converse follows from Theorem \ref{th:inter}.
\end{proof}


\begin{remark}\label{rem:nonallargarti}
If $n>2$, $k<\frac{rn}{h+1}$ and $h=1$, then there exist $k$-dimensional $1$-scattered $\fq$-subspaces of $V=V(r,q^n)$ such that \eqref{eq:intersection} does not hold.
Therefore, Corollary \ref{cor:caratterizzazione} cannot be extended to all $h$-scattered subspaces which are not $\frac{rn}{h+1}$-dimensional $h$-scattered subspaces.

Indeed, let $U^\prime$ be a scattered $k^\prime$-dimensional $\fq$-subspace such that $H=\langle U^\prime\rangle_{\fqn}$ is a $(r-1)$-dimensional $\fqn$-subspace of $V$, and $\mathbf{v}\in V\setminus H$.
Then $U=U^\prime\oplus\langle\mathbf{v}\rangle_{\fq}$ is a $1$-scattered $\fq$-subspace of $V$ of dimension $k=k^\prime+1$ such that $\dim_{\fq}(U\cap H)=k-1>k-n+1$, as $n>2$.
\end{remark}

By using Corollary \ref{cor:caratterizzazione}, a further characterization of $\frac{rn}{h+1}$-dimensional $h$-scattered subspaces is proved.

\begin{corollary}\label{cor:caratterizzazione2}
Let $r,n,h$ be positive integers such that $h+1$ divides $rn$ and $n\geq h+3$.
Let $U$ be an $\frac{rn}{h+1}$-dimensional $\fq$-subspace of $V=V(r,q^n)$.
Then $U$ is an $\frac{rn}{h+1}$-dimensional $h$-scattered $\fq$-subspace of $V$ if and only if $U^{\perp_O}$ satisfies
\begin{equation}\label{eq:intermaxpoint} \dim_{\fq}(\langle \mathbf{v}\rangle_{\F_{q^n}}\cap U^{\perp_O})\leq h 
\end{equation}
for every $\mathbf{v}\in V\setminus\{\mathbf{0}\}$.
\end{corollary}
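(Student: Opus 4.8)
The plan is to deduce the statement from Corollary \ref{cor:caratterizzazione} by transporting the hyperplane-intersection bound \eqref{eq:intermax} on $U$ into the point-intersection bound \eqref{eq:intermaxpoint} on $U^{\perp_O}$ through the ordinary duality. Concretely, I would fix a non-degenerate reflexive sesquilinear form $\sigma$ over $V$ as in Section \ref{sec:classicalduality}, let $\perp$ and $\perp'$ be the orthogonal complement maps it induces on the $\fqn$-subspaces and on the $\fq$-subspaces of $V$ respectively, and set $U^{\perp_O}=U^{\perp'}$. By the last property listed in Section \ref{sec:classicalduality}, a different choice of $\sigma$ only replaces $U^{\perp_O}$ by a $\mathrm{GL}(V)$-equivalent subspace; since every element of $\mathrm{GL}(V)$ maps $\fqn$-lines to $\fqn$-lines and preserves $\fq$-dimensions of intersections, the validity of \eqref{eq:intermaxpoint} does not depend on this choice, so fixing one $\sigma$ is harmless.

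The key step is a dimension count based on the duality formula \eqref{eq:dualweight}. The assignment $H\mapsto H^{\perp}=H^{\perp'}$ is a bijection between the $(r-1)$-dimensional $\fqn$-subspaces $H$ of $V$ and the $1$-dimensional $\fqn$-subspaces $\langle\mathbf{v}\rangle_{\fqn}$ of $V$, because $\perp$ is an involution on the lattice of $\fqn$-subspaces and $\dim_{\fqn}(H^{\perp})=r-(r-1)=1$. Applying \eqref{eq:dualweight} with $W=H$ (so that $s=r-1$ and $W^{\perp'}=H^{\perp}=\langle\mathbf{v}\rangle_{\fqn}$) and with $\dim_{\fq}(U)=\frac{rn}{h+1}$, I would obtain
\[ \dim_{\fq}(U^{\perp_O}\cap\langle\mathbf{v}\rangle_{\fqn})-\dim_{\fq}(U\cap H)=rn-\frac{rn}{h+1}-(r-1)n=n-\frac{rn}{h+1}, \]
whence, for each matched pair $(H,\langle\mathbf{v}\rangle_{\fqn})$,
\[ \dim_{\fq}(U^{\perp_O}\cap\langle\mathbf{v}\rangle_{\fqn})=\dim_{\fq}(U\cap H)+n-\frac{rn}{h+1}. \]

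From this identity the inequality $\dim_{\fq}(U^{\perp_O}\cap\langle\mathbf{v}\rangle_{\fqn})\leq h$ is seen to be equivalent to $\dim_{\fq}(U\cap H)\leq\frac{rn}{h+1}-n+h$. Letting $\mathbf{v}$ range over $V\setminus\{\mathbf{0}\}$ corresponds, through the bijection above, to letting $H$ range over all $(r-1)$-dimensional $\fqn$-subspaces, so \eqref{eq:intermaxpoint} for $U^{\perp_O}$ holds if and only if \eqref{eq:intermax} holds for $U$. Since $h+1\mid rn$ and $n\geq h+3$, Corollary \ref{cor:caratterizzazione} asserts that \eqref{eq:intermax} characterizes the $\frac{rn}{h+1}$-dimensional $h$-scattered subspaces, which completes the argument.

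I do not anticipate a genuine obstacle: the proof is the single computation \eqref{eq:dualweight} combined with the already-established Corollary \ref{cor:caratterizzazione}. The only points that require care are (i) verifying that $H\mapsto H^{\perp'}$ is surjective onto the $\fqn$-lines, which follows from $\perp$ being an involution, so that quantifying over $\mathbf{v}$ matches quantifying over $H$; and (ii) confirming the independence of \eqref{eq:intermaxpoint} from the chosen sesquilinear form, so that the phrasing in terms of $U^{\perp_O}$ is well posed.
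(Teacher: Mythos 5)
Your proposal is correct and takes essentially the same route as the paper: both proofs use Equation \eqref{eq:dualweight} to convert the hyperplane bound \eqref{eq:intermax} on $U$ into the point bound \eqref{eq:intermaxpoint} on $U^{\perp_O}$, and then conclude via Corollary \ref{cor:caratterizzazione} (whose forward direction is exactly the appeal to Theorem \ref{th:inter} made in the paper's proof). Your explicit checks that $H\mapsto H^{\perp}$ bijects hyperplanes onto $\fqn$-lines and that \eqref{eq:intermaxpoint} is independent of the chosen sesquilinear form are points the paper leaves implicit.
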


\begin{proof}
If $U$ is an $\frac{rn}{h+1}$-dimensional $h$-scattered $\F_q$-subspace of $V$, then the assertion follows from Theorem \ref{th:inter} and Equation \eqref{eq:dualweight}.
Conversely, $\dim_{\fq}(U)=\frac{rn}{h+1}$ implies $\dim_{\F_q}(U^{\perp_O})=\frac{hrn}{h+1}$.
Together with the assumption \eqref{eq:intermaxpoint} and Equation \eqref{eq:dualweight}, this yields
\[ \dim_{\fq}(H\cap U)\leq \frac{rn}{h+1}-n+h 
\]
for every $(r-1)$-dimensional $\F_{q^n}$-subspace $H$ of $V$.
The claim now follows from Corollary \ref{cor:caratterizzazione}.
\end{proof}

\noindent Theorems \ref{th:MRDiff} and \ref{th:MRDconverse}, together with Corollary \ref{cor:caratterizzazione2}, provide a correspondence between the following objects, under the assumption $n\geq h+3$:
\begin{itemize}
    \item $\frac{rn}{h+1}$-dimensional $h$-scattered $\fq$-subspaces of $V(r,q^n)$; and
    \item $\fq$-linear MRD codes with parameters $\left(\frac{rn}{h+1},n,q;n-h\right)$ and right idealiser isomorphic to $\fqn$.
\end{itemize}

\section{MRD codes inequivalent to generalized (twisted) Gabidulin codes}\label{sec:noGab}

In this section we prove that the family of RM codes described in Section \ref{sec:subMRD} contains MRD codes which are not equivalent to punctured generalized Gabidulin codes nor to punctured generalized twisted Gabidulin codes.

Let $N,k,s$ be positive integers with $k<N$ and $\gcd(s,N)=1$.
The \emph{generalized Gabidulin code} $\mathcal{G}_{k,s}$ is defined as
\[ \mathcal{G}_{k,s}=\left\{ x\in\F_{q^N}\mapsto a_0 x+ a_1 x^{q^s}+\ldots+a_{k-1}x^{q^{s(k-1)}}\in\F_{q^N}\,\colon\, a_0,\ldots,a_{k-1}\in\F_{q^N} \right\} \]
and is an $\fq$-linear MRD code with parameters $(N,N,q;N-k+1)$.
The codes $\mathcal{G}_{k,s}$ were first introduced in \cite{Delsarte,Gabidulin} for $s=1$ and generalized in \cite{kshevetskiy_new_2005}.

Let $0\leq c<N$ and $\eta\in\F_{q^N}$ be such that $\eta^{(q^N-1)/(q-1)}\ne(-1)^{Nk}$.
The \emph{generalized twisted Gabidulin code} $\mathcal{H}_{k,s}(\eta,c)$ is defined as
\[ \mathcal{H}_{k,s}(\eta,c)=\left\{ x\in\F_{q^N}\mapsto a_0 x+ a_1 x^{q^s}+\ldots+a_{k-1}x^{q^{s(k-1)}}+a_0^{q^c}\eta x^{q^{sk}}\in\F_{q^N}\,\colon\, a_i\in\F_{q^N} \right\} \]
and is an $\fq$-linear MRD code with parameters $(N,N,q; N-k+1)$.
The codes $\mathcal{H}_{k,s}(\eta,c)$ were first introduced in \cite{Sheekey} and investigated in \cite{LTZ}.

As a consequence of \cite[Theorem 3.8]{TZ}, the left idealisers of punctured generalized
(twisted) Gabidulin codes satisfy the following property.

\begin{lemma}\label{lemma:TZ}
Let $g:\F_{q^N}\to\F_{q^M}$ be an $\fq$-linear map of rank $M\leq N$, and consider the punctured code $\mathcal{C}$, where either $\mathcal{C}=g\circ\mathcal{G}_{k,s}$ or $\mathcal{C}=g\circ\mathcal{H}_{k,s}(\eta,c)$.
If $M>k+1$ and $(M,k)\ne(4,2)$, then $|L(\mathcal{C})|=q^\ell$ where $\ell$ divides $N$.
\end{lemma}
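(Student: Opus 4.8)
The plan is to first reduce the statement to the mere fact that $L(\mathcal{C})$ is a field, and then to control its order using the structure of punctured (twisted) Gabidulin codes. By Theorem~\ref{th:punct}, puncturing the square MRD code $\mathcal{G}_{k,s}$ (resp.\ $\mathcal{H}_{k,s}(\eta,c)$), which has parameters $(N,N,q;N-k+1)$, by the rank-$M$ map $g$ is admissible since $N-(N-k+1)=k-1\leq M\leq N$, and it yields an MRD code $\mathcal{C}$ with parameters $(M,N,q;M-k+1)$. The hypothesis $M>k+1$ forces the minimum distance $d=M-k+1$ to satisfy $d\geq 3>1$. As $M\leq N$ and $d>1$, the third item of Theorem~\ref{th:propertiesideal} applies and shows that $L(\mathcal{C})$ is a finite field containing $\fq$, so that $|L(\mathcal{C})|=q^{\ell}$ for a well-defined integer $\ell$ with $\ell\leq M$. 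Thus the entire content of the statement is the divisibility $\ell\mid N$.

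To see where such a divisibility originates, I would first exhibit a natural subfield of $L(\mathcal{C})$ coming from $\F_{q^N}$. Set $K=\ker g$, an $\fq$-subspace of $\F_{q^N}$ of codimension $M$, and consider its stabiliser $\mathbb{S}=\{\alpha\in\F_{q^N}\colon \alpha K=K\}\cup\{0\}$, which is a subfield of $\F_{q^N}$ and hence has order $q^{\ell_0}$ with $\ell_0\mid N$. For $\alpha\in\mathbb{S}$ the scalar map $x\mapsto\alpha x$ stabilises $K=\ker g$ and therefore descends to an $\fq$-linear map $Y_\alpha\colon\F_{q^M}\to\F_{q^M}$ with $Y_\alpha\circ g=g\circ(x\mapsto\alpha x)$. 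Since left multiplication by $\alpha$ preserves $\mathcal{G}_{k,s}$ for every $\alpha$ (and preserves $\mathcal{H}_{k,s}(\eta,c)$ exactly on the subfield of $\alpha$ fixed by $x\mapsto x^{q^c}$), one checks directly that $Y_\alpha\in L(\mathcal{C})$. This already produces a copy of a subfield $\F_{q^{\ell_0'}}\subseteq L(\mathcal{C})$ with $\ell_0'\mid N$, giving the expected lower bound and the correct flavour of the answer.

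The remaining, and principal, point is the reverse inclusion: that $L(\mathcal{C})$ contains no further elements, so that its order is exactly that of this subfield of $\F_{q^N}$ and hence $\ell\mid N$. This is precisely the description I would import from \cite[Theorem~3.8]{TZ}, which computes the idealisers of punctured generalized (twisted) Gabidulin codes. The numerical hypotheses $M>k+1$ and $(M,k)\neq(4,2)$ play exactly the role of placing $\mathcal{C}$ in the generic range where that description is valid: for $M$ too close to $k$, and for the sporadic pair $(M,k)=(4,2)$, the punctured code can acquire additional idealiser elements (equivalently, a larger automorphism group), which would destroy the divisibility. Invoking \cite[Theorem~3.8]{TZ} under these hypotheses forces $L(\mathcal{C})$ to coincide with a field isomorphic to a subfield of $\F_{q^N}$, and the conclusion $|L(\mathcal{C})|=q^{\ell}$ with $\ell\mid N$ follows.

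The step I expect to be the genuine obstacle is this last one, namely ruling out extra idealiser elements without simply quoting \cite[Theorem~3.8]{TZ}. A self-contained argument would take an arbitrary $Y\in L(\mathcal{C})$, impose the compatibility conditions $Y\circ g\circ f\in g\circ\mathcal{G}_{k,s}$ for all $f$, and translate them into polynomial identities among the coefficients of the underlying linearised polynomials. The exceptional pair $(M,k)=(4,2)$ is a warning that these identities genuinely admit anomalous solutions for special small parameters, so any direct proof must track the degree bounds precisely enough to exclude such solutions exactly in the stated range, which is the delicate analysis encapsulated in \cite[Theorem~3.8]{TZ}.
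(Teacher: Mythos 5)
Your proposal is correct and takes essentially the same route as the paper: the paper states this lemma without proof, as a direct consequence of \cite[Theorem 3.8]{TZ}, which is exactly the citation your argument ultimately rests on for the key divisibility $\ell\mid N$. Your additional scaffolding (the MRD parameters of $\mathcal{C}$ via Theorem~\ref{th:punct}, the fieldness of $L(\mathcal{C})$ via Theorem~\ref{th:propertiesideal}, and the subfield of $L(\mathcal{C})$ induced by the stabiliser of $\ker g$) is sound but not part of the paper's treatment, which delegates everything to the cited result.
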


\begin{remark}
In Theorem \ref{th:new} we investigate the equivalence issue between the codes $\mathcal{C}$ as in \eqref{eq:rd} having parameters $(M,N,q;d)$ with $M\geq N$, and punctured generalized (twisted) Gabidulin codes.
As the punctured $(M,N,q;d)$-codes $\mathcal{D}$ arising from Lemma \ref{lemma:TZ} satisfy $M\leq N$, we need to consider the adjoint code $\mathcal{D}^{\top}$ of $\mathcal{D}$, having parameters $(N,M,q;d)$.
In this sense, whenever $\mathcal{C}$ and $\mathcal{D}^\top$ are not equivalent, we will say that $\mathcal{C}$ is not equivalent to a punctured generalized (twisted) Gabidulin code.
\end{remark}

We show in Theorem \ref{th:new} that the condition $(h+1)\nmid r$ is sufficient for the MRD codes of Section \ref{sec:subMRD} to be inequivalent to punctured generalized (twisted) Gabidulin codes.
Afterwards, we provide examples.

\begin{theorem}\label{th:new}
Let $r,n,h$ be positive integers such that $(h+1)$ divides $rn$, $n\geq h+3$, and $(n,h)\ne(4,1)$.
Let $\mathcal{C}=\mathcal{C}_{U,G}$ be the MRD code with parameters $\left(\frac{rn}{h+1},n,q;n-h\right)$ defined in Theorem \ref{th:MRDiff}.
If $(h+1)$ does not divide $r$, then $\mathcal{C}$ is not equivalent to any punctured generalized Gabidulin code nor to any punctured generalized twisted Gabidulin code.
\end{theorem}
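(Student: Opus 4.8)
The plan is to exploit the idealiser invariant established in Theorem~\ref{th:propertiesideal}, which says idealisers are preserved (up to $\fq$-algebra isomorphism) under code equivalence. The code $\mathcal{C}=\mathcal{C}_{U,G}$ has parameters $\left(\frac{rn}{h+1},n,q;n-h\right)$ with first parameter $M=\frac{rn}{h+1}\geq n$ (since the MRD condition forces $k\leq(r-1)n$, equivalently $\frac{rn}{h+1}\geq n$), and by Theorem~\ref{th:MRDiff} its right idealiser is $\mathcal{F}_n\cong\fqn$. Thus the invariant I would track is the \emph{left} idealiser $L(\mathcal{C})$. By the second item of Theorem~\ref{th:propertiesideal}, passing to the adjoint swaps left and right idealisers; combined with the convention in the Remark preceding the statement, comparing $\mathcal{C}$ with a punctured generalized (twisted) Gabidulin code amounts to comparing $L(\mathcal{C})$ with $L(\mathcal{C})$ of the relevant adjoint, and this is exactly the quantity controlled by Lemma~\ref{lemma:TZ}.

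\textbf{Step 1: Compute the size of $L(\mathcal{C})$.} First I would determine $|L(\mathcal{C})|$ for our code. Since $M=\frac{rn}{h+1}\geq n$ and the minimum distance $d=n-h>1$ (using $n\geq h+3>h+1$), the third item of Theorem~\ref{th:propertiesideal}, applied to the adjoint $\mathcal{C}^\top$ whose shape is $n\times M$ with $n\leq M$, yields that $L(\mathcal{C}^\top)=R(\mathcal{C})^\top$ is a finite field; since $R(\mathcal{C})=\mathcal{F}_n$ has order $q^n$, we get $|L(\mathcal{C}^\top)|=q^n$. Dually, $L(\mathcal{C})$ is a finite field of order dividing $q^M$, but the crucial point is to pin down its exact order. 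Here I expect $|L(\mathcal{C})|=q^n$ as well: intuitively, the field structure $\mathcal{F}_n$ on the right forces a matching field of the same order on the left, and I would make this precise by analysing the action of scalars or by a direct dimension count on the subspace $U$.

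\textbf{Step 2: Apply Lemma~\ref{lemma:TZ} and compare.} Suppose for contradiction that $\mathcal{C}$ is equivalent to a punctured generalized (twisted) Gabidulin code in the sense of the Remark, i.e.\ $\mathcal{C}$ is equivalent to $\mathcal{D}^\top$ where $\mathcal{D}=g\circ\mathcal{G}_{k,s}$ or $g\circ\mathcal{H}_{k,s}(\eta,c)$ has parameters $(M',N',q;d)$ with the roles of the two lengths as in the Remark. I would match parameters: the adjoint $\mathcal{D}^\top$ has first length $n$ and second length $M=\frac{rn}{h+1}$, forcing the underlying ambient $\F_{q^{N}}=\F_{q^{rn/(h+1)}}$ and the puncturing map $g$ to have rank $M=\frac{rn}{h+1}$ onto the codomain of size $q^n$, so $N=\frac{rn}{h+1}$ and the target dimension equals $n$. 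Then Lemma~\ref{lemma:TZ} (with its $M$ equal to our $n$, its $N$ equal to our $\frac{rn}{h+1}$, its $k$ equal to $h+1$, and the hypotheses $n>h+2$ and $(n,h)\ne(4,1)$ guaranteeing $M>k+1$ and $(M,k)\ne(4,2)$) gives $|L(\mathcal{D})|=q^\ell$ with $\ell\mid \frac{rn}{h+1}$. Passing to adjoints, $L(\mathcal{D}^\top)=R(\mathcal{D})^\top$, so by the idealiser invariant $R(\mathcal{D})$ is a field of order $q^\ell$ which must equal $R(\mathcal{C})\cong\fqn$, giving $\ell=n$. Hence $n\mid\frac{rn}{h+1}$, i.e.\ $(h+1)\mid r$, contradicting the hypothesis $(h+1)\nmid r$.

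\textbf{The main obstacle} will be Step~1 together with the careful parameter bookkeeping in Step~2: I must correctly translate between the $(m,n)$-conventions of our code, its adjoint, the puncturing construction, and the indices $(M,N,k)$ in Lemma~\ref{lemma:TZ}, checking that the divisibility constraint $\gcd(s,N)=1$ and the excluded case $(M,k)=(4,2)$ really are avoided under the stated hypotheses $n\geq h+3$ and $(n,h)\ne(4,1)$. The delicate issue is ensuring that the idealiser I extract from Lemma~\ref{lemma:TZ} is the \emph{same} idealiser (left versus right, and on the code versus its adjoint) as the one I control for $\mathcal{C}$; a sign error in the transpose relations $L(\mathcal{C}^\top)=R(\mathcal{C})^\top$ would break the argument. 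Once the correspondence $\ell=n\Leftrightarrow(h+1)\mid r$ is secured, the contradiction is immediate and the theorem follows.
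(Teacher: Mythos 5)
Your strategy is the paper's own: match parameters to force $k=h+1$, $N=\frac{rn}{h+1}$, $M=n$, apply Lemma~\ref{lemma:TZ} to the punctured code, and use invariance of idealisers under equivalence (Theorem~\ref{th:propertiesideal}) to conclude $n\mid\frac{rn}{h+1}$, contradicting $(h+1)\nmid r$. However, the pivotal inference in your Step~2 is invalid as written. Lemma~\ref{lemma:TZ} controls $L(\mathcal{D})$, the \emph{left} idealiser of the punctured code $\mathcal{D}=g\circ\mathcal{G}_{k,s}$ (or its twisted analogue): it gives $|L(\mathcal{D})|=q^\ell$ with $\ell\mid\frac{rn}{h+1}$. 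You then quote the identity $L(\mathcal{D}^\top)=R(\mathcal{D})^\top$ and deduce that $R(\mathcal{D})$ has order $q^\ell$ and ``must equal $R(\mathcal{C})$''. Neither deduction follows: that identity involves $L(\mathcal{D}^\top)$, not the $L(\mathcal{D})$ that Lemma~\ref{lemma:TZ} gives you, so nothing transfers to $R(\mathcal{D})$; and the assumed equivalence is between $\mathcal{C}$ and $\mathcal{D}^\top$, so it identifies $R(\mathcal{C})$ with $R(\mathcal{D}^\top)$, not with $R(\mathcal{D})$. What you need is the \emph{other} transpose relation of Theorem~\ref{th:propertiesideal}, namely $R(\mathcal{D}^\top)=L(\mathcal{D})^\top$: then $q^n=|R(\mathcal{C})|=|R(\mathcal{D}^\top)|=|L(\mathcal{D})|=q^\ell$, whence $\ell=n$, $n\mid\frac{rn}{h+1}$, and $(h+1)\mid r$, the desired contradiction. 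This is precisely the ``sign error in the transpose relations'' that you yourself flagged as the delicate point, and it is the error committed; the repair is one line and recovers the paper's proof verbatim.

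Your Step~1 is, moreover, both unnecessary and a dead end. The argument never needs $L(\mathcal{C})$: the only invariant used is $R(\mathcal{C})=\mathcal{F}_n$, which Theorem~\ref{th:MRDiff} already supplies, transported to $L(\mathcal{D})$ through the adjoint as above. Worse, your expectation $|L(\mathcal{C})|=q^n$ is unjustified and in fact fails within this very family of codes: for the restriction $\mathcal{C}|_{\fqn}$ of a Gabidulin code (the Example in Section~\ref{sec:subMRD}, a code of type $\mathcal{C}_{U,G}$ with $t=r/(h+1)>1$), left composition with multiplication by any $\beta\in\F_{q^{nt}}$ preserves the code, so its left idealiser has order at least $q^{rn/(h+1)}>q^n$. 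The paper deliberately determines only the right idealiser of $\mathcal{C}_{U,G}$, and that is all the theorem requires; treating the computation of $L(\mathcal{C})$ as the ``main obstacle'' would have sent you down an unprovable detour.
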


\begin{proof}
Suppose that $\mathcal{C}$ is equivalent to $\mathcal{D}$, where $\mathcal{D}$ is either $(g\circ \mathcal{G}_{k,s})^\top$ or $(g\circ\mathcal{H}_{k,s}(\eta,c))^\top$.
Then $k=h+1$, $N=\frac{rn}{h+1}$ and $M=n$.
Since $n>h+2$ and $(n,h)\ne(4,1)$, we can apply Lemma \ref{lemma:TZ} and Theorem \ref{th:propertiesideal} to get $|R(\mathcal{D})|=q^\ell$, where $\ell$ divides $\frac{rn}{h+1}$.
As $\mathcal{C}$ and $\mathcal{D}$ are equivalent, by Theorem \ref{th:propertiesideal} we have $|R(\mathcal{C})|=|R(\mathcal{D})|=q^\ell$. Then $n=\ell$ and hence $n\mid\frac{rn}{h+1}$, a contradiction to $(h+1)\nmid r$.
\end{proof}

\begin{example}
Let $n\geq 6$ be even and $r^\prime\geq 3$ be odd.
By \cite[Theorem 3.6]{CsMPZ}, there exist $\frac{rn}{2}$-dimensional $(n-3)$-scattered $\fq$-subspaces $U^\prime$ of $V=V\left(\frac{r^\prime(n-2)}2,q^n \right)$.
Let $h=n-3$ and $r=\frac{r^\prime(n-2)}2$, and consider $U=U^{\prime\perp_O} \subseteq V$. 
Note that $(h+1)\nmid r$. 
Choose $G$ as in Theorem \ref{th:construction}.
By Theorems \ref{th:MRDiff} and \ref{th:new}, the MRD code $\C_{U,G}$ with parameters $\left(\frac{r^\prime n}{2},n,q;3\right)$ is not equivalent to any punctured generalized Gabidulin code nor to any punctured generalized twisted Gabidulin code.
\end{example}

\begin{example}
Let $n\geq6$ be even and $r\geq3$ be odd.
Examples 3.11, 3.12 and 3.13 in \cite{CSMPZ2016} provide MRD codes with parameters $\left(\frac{rn}{2},n,q;n-1\right)$ which are not equivalent to any punctured generalized Gabidulin code nor to any punctured generalized twisted Gabidulin code.
\end{example}

\section{$h$-scattered linear sets: intersection with hyperplanes and codes with $h+1$ weights}\label{sec:h+1weights}

Let $V=V(r,q^n)$. A point set $L$ of $\Omega=\PG(V,\F_{q^n})\allowbreak=\PG(r-1,q^n)$ is an \emph{$\F_q$-linear set} of $\Omega$ of rank $k$ if it is defined by the non-zero vectors of a $k$-dimensional $\F_q$-subspace $U$ of $V$, i.e.
\[L=L_U:=\{\la {\bf u} \ra_{\mathbb{F}_{q^n}} \colon {\bf u}\in U^* \}\}.\]
We denote the rank of $L_U$ by $\mathrm{rk}(L_U)$.
Let $\mathcal{S}=\PG(S,\F_{q^n})$ be a subspace of $\Omega$ and $L_U$ be an $\F_q$-linear set of $\Omega$. Then $\mathcal{S} \cap L_U=L_{S\cap U}$. If $\dim_{\F_q} (S\cap U)=i$, i.e. if $\mathcal{S} \cap L_U=L_{S\cap U}$ has rank $i$, we say that $\mathcal{S}$ has \emph{weight} $i$ in $L_U$, and we write $w_{L_U}(\mathcal{S})=i$.
Note that $0 \leq w_{L_U}(\mathcal{S}) \leq \min\{{\rm rk}(L_U),n(\dim(\mathcal{S})+1)\}$.
In particular, a point $P$ belongs to an $\F_q$-linear set $L_U$ if and only if $w_{L_U}(P)\geq 1$.
If $U$ is a (maximum) scattered $\F_q$-subspace of $V$, then we say that $L_U$ is (maximum) scattered.
In this case,
\[
|L_U| = \theta_{k-1}=\frac{q^k-1}{q-1},
\]
where $k$ is the rank of $L_U$; equivalently, all of its points have weight one. 

If $U$ is a (maximum) $h$-scattered $\F_q$-subspace of $V$, $L_U$ is said to be a (maximum) $h$-scattered $\F_q$-linear set in $\Omega=\PG(V,\F_{q^n})$. Therefore, an $\F_q$-linear set $L_U$ of $\Omega$ is $h$-scattered if
\begin{itemize}
  \item $\langle L_U \rangle= \Omega$;
  \item for every $(h-1)$-subspace $\mathcal{S}$ of $\Omega$, we have \[ w_{L_{U}}(\mathcal{S}) \leq h.\]
\end{itemize}
When $h=r-1$ and $\dim_{\F_q}(U)=n$, we obtain the scattered linear sets with respect to the hyperplanes introduced in \cite{Lunardon2017} and in \cite{ShVdV}.

By Theorem \ref{th:inter}, if $L_U$ is a $h$-scattered $\F_q$-linear set of rank $\frac{rn}{h+1}$ in $\Omega$, then for every hyperplane $\mathcal{H}$ of $\Omega$ we have
\[ \frac{rn}{h+1}-n\leq w_{L_U}(\mathcal{H})\leq \frac{rn}{h+1}-n+h. \]
The following question arises: 

\begin{center}
for any $j \in \{\frac{rn}{h+1}-n,\ldots,\frac{rn}{h+1}-n+h\}$, how many hyperplanes of $\Omega$ have weight $j$ in $L_U$?  
\end{center}

The answer is known for $h=1$, $h=2$ and $h=r-1$; see \cite{BL2000,NZ,ShVdV}.
Theorem \ref{th:intersectionhyper} gives a complete answer for any admissible values of $h$, $r$ and $n$.

\begin{theorem}\label{th:intersectionhyper}
Let $L_U$ be a $h$-scattered $\F_q$-linear set of rank $\frac{rn}{h+1}$ in $\Omega=\PG(r-1,q^n)$.
For every $i\in\left\{0,\ldots,h\right\}$, the number of hyperplanes of weight $\frac{rn}{h+1}-n+i$ in $L_U$ is
\begin{equation}\label{eq:ti} t_i=\frac{1}{q^n-1}{n \brack i}_q \sum_{j=0}^{h-i} (-1)^{j}{n-i \brack j}_q q^{\binom{j}{2}}\left(q^\frac{rn(h-i-j+1)}{h+1}-1\right). \end{equation}
In particular, $t_i>0$ for every $i\in\left\{0,\ldots,h\right\}$.
\end{theorem}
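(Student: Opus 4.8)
The plan is to transfer the hyperplane-counting problem on $L_U$ into a rank-distribution computation for the MRD code attached to the ordinary dual $U^{\perp_O}$. Fix a non-degenerate reflexive sesquilinear form $\sigma$ on $V$, inducing the orthogonality $\perp$ on $\fqn$-subspaces and $\perp_O$ on $\fq$-subspaces, and take $U^{\perp_O}$ accordingly, so that $\dim_{\fq}(U^{\perp_O})=rn-\frac{rn}{h+1}=\frac{hrn}{h+1}$; fix also an $\fq$-linear map $G\colon V\to V(\frac{rn}{h+1},q)$ with $\ker G=U^{\perp_O}$. Applying the dual-weight identity \eqref{eq:dualweight} to the $\fqn$-subspace $H$ of dimension $r-1$ and the $\fq$-subspace $U$, and writing $\langle\mathbf{v}\rangle_{\fqn}=H^{\perp}$, gives
\[ \dim_{\fq}(U^{\perp_O}\cap\langle\mathbf{v}\rangle_{\fqn})=\dim_{\fq}(U\cap H)+n-\tfrac{rn}{h+1}. \]
Since $\frac{rn}{h+1}-n\le\dim_{\fq}(U\cap H)\le\frac{rn}{h+1}-n+h$ by Theorem~\ref{th:inter}, this forces $0\le\dim_{\fq}(U^{\perp_O}\cap\langle\mathbf{v}\rangle_{\fqn})\le h$ for every $\mathbf{v}\in V^*$, so that $\iota:=\max\{\dim_{\fq}(U^{\perp_O}\cap\langle\mathbf{v}\rangle_{\fqn})\colon\mathbf{v}\in V^*\}\le h$.

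Next I would pin down $\iota$ exactly. By Theorem~\ref{th:construction} the code $\mathcal{C}_{U^{\perp_O},G}$ has dimension $rn$ and parameters $(\frac{rn}{h+1},n,q;n-\iota)$; since $h\le r-1$ gives $\frac{rn}{h+1}\ge n$, the Singleton-like bound of Theorem~\ref{th:Singleton} reads $rn\le\frac{rn}{h+1}(\iota+1)$, whence $\iota\ge h$. Therefore $\iota=h$, the bound is attained with equality, and $\mathcal{C}_{U^{\perp_O},G}$ is an MRD code with parameters $(\frac{rn}{h+1},n,q;n-h)$ whose rank distribution is the one displayed in Theorem~\ref{th:MRDiff} with $\iota=h$.

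For the count I would use that $\perp$ is a bijection between the hyperplanes of $\Omega$ (the $(r-1)$-dimensional $\fqn$-subspaces of $V$) and the points of $\Omega$ (the $1$-dimensional $\fqn$-subspaces of $V$). By the displayed identity, a hyperplane $H$ has weight $\frac{rn}{h+1}-n+i$ in $L_U$ exactly when its dual point $\langle\mathbf{v}\rangle_{\fqn}=H^{\perp}$ satisfies $\dim_{\fq}(U^{\perp_O}\cap\langle\mathbf{v}\rangle_{\fqn})=i$. Since $\mathbf{v}\mapsto\Gamma_{\mathbf{v}}$ is a bijection $V\to\mathcal{C}_{U^{\perp_O},G}$ with $\dim_{\fq}(\ker\Gamma_{\mathbf{v}})=\dim_{\fq}(U^{\perp_O}\cap\langle\mathbf{v}\rangle_{\fqn})$ for $\mathbf{v}\ne\mathbf{0}$, and each point of $\Omega$ gathers exactly $q^n-1$ nonzero vectors of the same intersection dimension, the number $t_i$ of hyperplanes of weight $\frac{rn}{h+1}-n+i$ equals $\frac{1}{q^n-1}A_{n-i}$, where $A_{n-i}$ counts the codewords of rank $n-i$ (the zero codeword, of rank $0$, is irrelevant as $i\le h<n$). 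Substituting the formula for $A_{n-i}$ from Theorem~\ref{th:MRDiff} yields \eqref{eq:ti}.

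The positivity $t_i>0$ for every $i\in\{0,\ldots,h\}$ then reduces to $A_{n-i}>0$, which is Lemma~\ref{lemma:weight} with $m'=n$, $d=n-h$ and $0\le i\le m'-d=h$. The one delicate point I expect is the identification $\iota=h$: the inequality $\iota\le h$ comes from the scattered hypothesis through Theorem~\ref{th:inter} and \eqref{eq:dualweight}, whereas $\iota\ge h$ is squeezed purely out of the Singleton bound for $\mathcal{C}_{U^{\perp_O},G}$, so that no auxiliary hypothesis (such as $n\ge h+3$) is needed.
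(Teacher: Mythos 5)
Your proposal is correct and takes essentially the same route as the paper's own proof: ordinary duality turns hyperplanes of weight $\frac{rn}{h+1}-n+i$ in $L_U$ into points of weight $i$ in $L_{U^{\perp_O}}$, the code $\mathcal{C}_{U^{\perp_O},G}$ is recognized as MRD with parameters $\left(\frac{rn}{h+1},n,q;n-h\right)$, and then $t_i=A_{n-i}/(q^n-1)$ together with the weight distribution of Theorem \ref{th:MRDiff} and Lemma \ref{lemma:weight} gives \eqref{eq:ti} and the positivity. The only difference is presentational: you unpack explicitly the Singleton-bound squeeze showing $\iota\geq h$ (hence $\iota=h$), a step the paper leaves implicit by directly invoking Theorem \ref{th:MRDiff}, and your observation that no hypothesis such as $n\geq h+3$ is needed is consistent with the paper's statement.
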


\begin{proof}
Let $\sigma,\sigma^\prime,\perp,\perp^\prime$ be defined as in Section \ref{sec:classicalduality}, and $\mathcal{H}=\PG(H,\F_{q^n})$ be a hyperplane of $\Omega$ with weight $\frac{rn}{h+1}-n+i$ in $L_U$.
By Equation \eqref{eq:dualweight},
\[ \dim_{\fq}(U^{\perp^\prime}\cap H^\perp)=\dim_{\fq}(U\cap H) +rn-\frac{rn}{h+1}-(r-1)n = i\leq h. \]
Thus, the $\F_q$-linear set $L_{U^{\perp^\prime}}$ of $\Omega$ has rank $\frac{hrn}{h+1}$, the weight in $L_{U^{\perp^\prime}}$ of a point of $\Omega$ is at most $h$, and the number of points of $\Omega$ with weight $i$ in $L_{U^{\perp^\prime}}$ equals the number $t_i$ of hyperplanes with weight $\frac{rn}{h+1}-n+i$ in $L_U$, for every $i\in \{0,\ldots,h\}$.
Let $W=V(\frac{rn}{h+1},q)$ and $G\colon V \rightarrow W$ be an $\F_q$-linear map with $\ker(G)=U$. By Theorem \ref{th:MRDiff}, the code $\mathcal{C}_{U^{\perp^\prime},G}=\{\Gamma_{\mathbf{v}} \colon \mathbf{v}\in V\}$ is an MRD code. 

Note that, for every $i \in \{0,\ldots,h\}$, $t_i$ is equal to the number of maps in $\C_{U^{\perp^\prime},G}$ having rank $n-i$, divided by $q^n-1$. In fact, if $P=\langle \mathbf{v}\rangle_{\F_{q^n}}$ is a point of weight $i$ in $L_{U^{\perp^\prime}}$, then $\Gamma_{\lambda\mathbf{v}}$ has rank $n-i$ for every $\lambda\in\F_{q^n}^*$; conversely, if $\mathbf{v}\in V^*$ is such that $\Gamma_{\mathbf{v}}$ has rank $n-i$, then $\langle\mathbf{v}\rangle_{\F_{q^n}}$ has weight $i$ in $L_{U^{\perp^\prime}}$.

Thus, $t_i=A_{n-i}/(q^n-1)$. By Theorem \ref{th:MRDiff}, Equation \eqref{eq:ti} follows. As $\mathcal{C}_{U^{\perp^\prime},G}$ is an MRD code, $t_i>0$ by Lemma \ref{lemma:complete weight}.
\end{proof}

Under the assumptions of Theorem \ref{th:intersectionhyper}, the property of being scattered determines completely the intersection numbers w.r.t.\ the hyperplanes.

\begin{corollary}\label{cor:intersec}
Let $L_U$ be a $h$-scattered $\fq$-linear set of rank $\frac{rn}{h+1}$ in $\Omega=\PG(r-1,q^n)$.
For every hyperplane $\mathcal{H}$ of $\Omega$, we have
\[ |\mathcal{H}\cap L_U| \in \left\{ \theta_{\frac{rn}{h+1}-n-1},\ldots,\theta_{\frac{rn}{h+1}-n+h-1} \right\}. \]
For every $i\in\{0,\ldots,h\}$, the number of hyperplanes $\mathcal{H}$ of $\Omega$ satisfying $|\mathcal{H}\cap L_U|=\theta_{\frac{rn}{h+1}-n+i-1}$ is $t_i$, as in Equation \eqref{eq:ti}. 
\end{corollary}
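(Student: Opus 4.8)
The plan is to convert the statement about \emph{weights} already established in Theorem~\ref{th:intersectionhyper} into one about \emph{cardinalities}, the bridge being that the restriction of $L_U$ to any hyperplane is again a scattered linear set. Fix a hyperplane $\mathcal{H}=\PG(H,\fqn)$ of $\Omega$, so that $H$ is an $(r-1)$-dimensional $\fqn$-subspace and $\mathcal{H}\cap L_U=L_{H\cap U}$, a linear set of rank $w:=w_{L_U}(\mathcal{H})=\dim_{\fq}(H\cap U)$.

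First I would record that $L_U$, being $h$-scattered, is scattered, i.e.\ every point of $L_U$ has weight one. Indeed, if some line $\langle\mathbf{v}\rangle_{\fqn}$ met $U$ in $\fq$-dimension at least $2$, then, after replacing $\mathbf{v}$ by a nonzero vector of $U\cap\langle\mathbf{v}\rangle_{\fqn}$ (so that $\mathbf{v}\in U$) and using that $U$ spans $V$ over $\fqn$ with $h\le r-1$, one could complete $\mathbf{v}$ to $\fqn$-linearly independent vectors $\mathbf{v},\mathbf{w}_2,\ldots,\mathbf{w}_h\in U$ spanning an $h$-dimensional $\fqn$-subspace $H'=\langle\mathbf{v}\rangle_{\fqn}\oplus\langle\mathbf{w}_2\rangle_{\fqn}\oplus\cdots\oplus\langle\mathbf{w}_h\rangle_{\fqn}$; since the corresponding $\fq$-subspaces lie in distinct summands, their dimensions add and $\dim_{\fq}(U\cap H')\ge 2+(h-1)=h+1$, contradicting the $h$-scattered property. (This is the known implication that an $h$-scattered subspace is $1$-scattered; alternatively one may simply cite it.) Consequently, for any point $P\in L_{H\cap U}$ one has $w_{L_{H\cap U}}(P)\le w_{L_U}(P)=1$ because $H\cap U\subseteq U$, so $L_{H\cap U}$ is a scattered linear set of rank $w$, whence $|\mathcal{H}\cap L_U|=\theta_{w-1}=\frac{q^{w}-1}{q-1}$.

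Next I would invoke Theorem~\ref{th:inter}, which forces $w\in\left\{\frac{rn}{h+1}-n,\ldots,\frac{rn}{h+1}-n+h\right\}$; writing $w=\frac{rn}{h+1}-n+i$ with $i\in\{0,\ldots,h\}$ yields $|\mathcal{H}\cap L_U|=\theta_{\frac{rn}{h+1}-n+i-1}$, which is precisely the claimed list of admissible intersection sizes. Finally, since $w\mapsto\theta_{w-1}$ is strictly increasing, the hyperplanes with $|\mathcal{H}\cap L_U|=\theta_{\frac{rn}{h+1}-n+i-1}$ are exactly those of weight $\frac{rn}{h+1}-n+i$ in $L_U$; by Theorem~\ref{th:intersectionhyper} their number equals $t_i$ as in~\eqref{eq:ti}, which proves the count.

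The only genuinely non-formal step is the reduction in the second paragraph, namely that the hyperplane sections of $L_U$ are scattered (equivalently, that an $h$-scattered subspace is scattered); everything else is bookkeeping through the function $\theta$. I therefore expect the scatteredness of the sections to be the single point requiring an argument or an explicit citation, while the passage from weights to cardinalities and the transfer of the counts $t_i$ follow immediately from Theorems~\ref{th:inter} and~\ref{th:intersectionhyper}.
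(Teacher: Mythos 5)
Your proposal is correct and follows exactly the route the paper intends: the corollary is stated as an immediate consequence of Theorem \ref{th:intersectionhyper}, with the implicit bridge being precisely the facts you spell out, namely that an $h$-scattered subspace is scattered (so every point of $L_U$, hence of any section $L_{H\cap U}$, has weight one, giving $|\mathcal{H}\cap L_U|=\theta_{w_{L_U}(\mathcal{H})-1}$) and that $w\mapsto\theta_{w-1}$ is strictly increasing, so the count of hyperplanes by cardinality equals the count by weight. Your explicit verification that $h$-scattered implies scattered, and the appeal to Theorem \ref{th:inter} for the admissible weights, are exactly the ingredients the paper leaves unwritten.
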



We now consider $h$-scattered $\fq$-linear sets $L_U$ of rank $\frac{rn}{h+1}$ as projective systems in $\Omega$, and the related linear codes (with the Hamming metric). By means of Theorem \ref{th:intersectionhyper} we determine the weight distribution and the weight enumerator.

\begin{theorem}
Let $L_U$ be a $h$-scattered $\fq$-linear set of rank $\frac{rn}{h+1}$ in $\Omega=\PG(r-1,q^n)$, and $\mathcal{C}_{L_U}$ be the corresponding
linear code over $\F_{q^n}$, having length $N=\theta_{\frac{rn}{h+1}-1}$ and dimension $k=r$.

Then $\mathcal{C}_{L_U}$ has minimum distance $d=\theta_{\frac{rn}{h+1}-1}-\theta_{\frac{rn}{h+1}-n+h-1}$ and exactly $h+1$ weights, namely 
$w_i=\theta_{\frac{rn}{h+1}-1}-\theta_{\frac{rn}{h+1}-n+i-1}$ with $i=0,\ldots,h$.
The weight enumerator of $\mathcal{C}_{L_U}$ is
\[1+\sum_{i=0}^{h}A_{w_i}^H z^{w_i},\]
where $A_{w_i}^H=t_i$, as in Equation \eqref{eq:ti}.
\end{theorem}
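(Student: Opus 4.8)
The plan is to translate the geometric data about the linear set $L_U$ into the language of projective systems and apply Proposition \ref{prop:projsyst}, which directly relates the weights of $\mathcal{C}_{L_U}$ to the intersection numbers of $L_U$ with hyperplanes. First I would verify that $L_U$, as a point set in $\Omega=\PG(r-1,q^n)$, genuinely forms a projective $[N,k,d]_{q^n}$-system: it spans $\Omega$ because $U$ is $h$-scattered and hence $\langle U\rangle_{\fqn}=V$, giving dimension $k=r$; and its cardinality is $N=\theta_{\frac{rn}{h+1}-1}$ because a $h$-scattered subspace is in particular scattered, so every point of $L_U$ has weight one and $|L_U|=\frac{q^{rn/(h+1)}-1}{q-1}=\theta_{\frac{rn}{h+1}-1}$.

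Next I would compute the weights. By Proposition \ref{prop:projsyst}, each weight of $\mathcal{C}_{L_U}$ has the form $N-|\mathcal{H}\cap L_U|$ as $\mathcal{H}$ ranges over the hyperplanes of $\Omega$. Corollary \ref{cor:intersec} tells us precisely which intersection sizes occur, namely $|\mathcal{H}\cap L_U|=\theta_{\frac{rn}{h+1}-n+i-1}$ for $i\in\{0,\ldots,h\}$, and that each such value is actually attained (since $t_i>0$). Substituting, the weights are exactly
\[
w_i=N-\theta_{\frac{rn}{h+1}-n+i-1}=\theta_{\frac{rn}{h+1}-1}-\theta_{\frac{rn}{h+1}-n+i-1},\qquad i=0,\ldots,h,
\]
which is the claimed list of $h+1$ distinct weights. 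The minimum distance corresponds to the largest intersection number, i.e.\ $i=h$, yielding $d=w_h=\theta_{\frac{rn}{h+1}-1}-\theta_{\frac{rn}{h+1}-n+h-1}$.

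For the multiplicities I would again invoke Proposition \ref{prop:projsyst}: the number $A^H_{w_i}$ of codewords of weight $w_i$ equals the number of hyperplanes $\mathcal{H}$ with $|\mathcal{H}\cap L_U|=\theta_{\frac{rn}{h+1}-n+i-1}$, which by Corollary \ref{cor:intersec} is precisely $t_i$ from Equation \eqref{eq:ti}. Assembling the weight $w_0=0$ term (the zero codeword, contributing the initial $1$) together with the nonzero weights $w_1,\ldots,w_h$ and their multiplicities gives the stated weight enumerator $1+\sum_{i=0}^{h}A^H_{w_i}z^{w_i}$. I do not anticipate a serious obstacle here, since all the hard content has already been established: the determination of the intersection numbers $t_i$ in Theorem \ref{th:intersectionhyper} (via the Delsarte and ordinary dualities and the weight distribution of the associated MRD code) is exactly what this statement repackages. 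The only points requiring care are the bookkeeping between weight $i$ of a hyperplane and weight $w_i$ of a codeword, and confirming that the weights $w_0,\ldots,w_h$ are genuinely distinct, which follows because the $q$-analogue theta values $\theta_{\frac{rn}{h+1}-n+i-1}$ are strictly increasing in $i$.
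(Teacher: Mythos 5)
Your proof takes essentially the same route as the paper: both deduce the statement from Corollary \ref{cor:intersec} together with Proposition \ref{prop:projsyst}, and your extra details (that $h$-scattered implies scattered, hence $|L_U|=\theta_{\frac{rn}{h+1}-1}$, and that the $w_i$ are pairwise distinct) are correct refinements of what the paper leaves implicit. The one slip is in your final bookkeeping: $w_0$ is \emph{not} $0$ and does not correspond to the zero codeword. Rather, $w_0=\theta_{\frac{rn}{h+1}-1}-\theta_{\frac{rn}{h+1}-n-1}$ is the \emph{maximum} weight, attained by the codewords coming from hyperplanes of minimal weight $\frac{rn}{h+1}-n$ in $L_U$ (and $A^H_{w_0}=t_0>0$ by Theorem \ref{th:intersectionhyper}, so this weight genuinely occurs). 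The initial $1$ in the weight enumerator is the separate contribution of the zero codeword, whose weight $0$ lies outside $\{w_0,\ldots,w_h\}$: indeed all $w_i$ are positive because $i\leq h<n$, where $h<n$ follows from $\frac{rn}{h+1}=\dim_{\fq}(U)\geq r$ (as $U$ spans $V$ over $\fqn$). With that correction, the enumerator $1+\sum_{i=0}^{h}A^H_{w_i}z^{w_i}$ and the count of exactly $h+1$ nonzero weights follow exactly as you argue; as written, your claim ``$w_0=0$'' would contradict your own earlier (correct) conclusion that the code has $h+1$ distinct weights arising from hyperplane sections.
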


\begin{proof}
From ${\rm rk}(L_U)=\frac{rn}{h+1}$ follows the length $N=\theta_{\frac{rn}{h+1}-1}$, and from $\langle L_U\rangle = \Omega$ follows the dimension $k=r$.
By Corollary \ref{cor:intersec} and Proposition \ref{prop:projsyst}, the minimum distance of $\mathcal{C}_{L_U}$ is $d=\theta_{\frac{rn}{h+1}-1}-\theta_{\frac{rn}{h+1}-n+h-1}$. Also, the weight distribution and the weight enumerator of $\mathcal{C}_{L_U}$ are as in the claim.
\end{proof}

\begin{remark}
Randrianarisoa in \cite{Ra} introduced the concept of $[N,k,d]$ $q$-system over $\fqn$ as an $N$-dimensional $\fq$-subspace $U$ of $\fqn^k$ such that $\langle U\rangle_{\fqn}=\fqn^k$ and $N-d=\max\left\{\dim_{\fq}(U\cap H)\colon H=V(k-1,q^n)\subset \fqn^k\right\}$.
For any positive integers $r,n,h$ such that $(h+1)\mid rn$ and $n\geq h+3$, Corollary \ref{cor:caratterizzazione} implies that the $[\frac{rn}{h+1},r,n-h]$ $q$-systems over $\fqn$ are exactly the $\frac{rn}{h+1}$-dimensional $h$-scattered $\fq$-subspaces of $\fqn^r$.
In this case, the code $\mathcal{C}$ considered in \cite[Section 3]{Ra} has a generator matrix $G$ whose columns form an $\fq$-basis of $U$.
The code $\mathcal{C}$ turns out to be obtained by $\mathcal{C}_{L_U}$ by deleting all but $\frac{rn}{h+1}$ positions (corresponding to an $\F_q$-basis of $U$).
Together with \cite[Theorem 2]{Ra}, this answers the question posed in \cite[Section 8]{Ra} about the correspondence between $h$-scattered linear sets and RM codes of this type.
\end{remark}

\section{Conclusions and open questions}\label{sec:open}

Several connections between between MRD codes and scattered $\fq$-subspaces (linear sets) have been introduced in the literature. 
In this paper we propose a unified approach which generalizes all of these connections.
To this aim, we give useful characterizations of $\frac{rn}{h+1}$-dimensional $h$-scattered subspaces.
This allows to use the known constructions of $\frac{rn}{h+1}$-dimensional $h$-scattered subspaces in order to define MRD codes, and conversely.
The family we construct is very large and contains some ''new'' MRD codes, in the sense that they cannot be obtained by puncturing generalized (twisted) Gabidulin codes; this property is in general quite difficult to establish.
We conclude the paper by determining the intersection numbers of $h$-scattered linear sets of rank $\frac{rn}{h+1}$ w.r.t.\ the hyperplanes and the weight distribution of the code obtained by regarding the linear set as a projective system.

Several remarkable problems remain open; we list some of them.
\begin{itemize}
    \item The main open problem about $\frac{rn}{h+1}$-dimensional $h$-scattered $\fq$-subspaces of $V(r,q^n)$ is their existence for every admissible values of $r$, $n$ and $h$. This would imply the existence of possibly new MRD codes. Conversely, constructions of MRD codes with parameters $(\frac{rn}{h+1},n,q;n-h)$ and right idealiser isomorphic to $\fqn$, when $(h+1)\nmid r$ and $h< n-3$, give new examples of $\frac{rn}{h+1}$-dimensional $h$-scattered subspaces.
    \item Corollary \ref{cor:caratterizzazione2} characterizes $\frac{rn}{h+1}$-dimensional $h$-scattered subspaces whenever $n\geq h+3$. Is this characterization true also for $n<h-3$?
    \item Are there other families of MRD codes which can be characterized in terms of $\fq$-subspaces defining linear sets with a special behaviour?
\end{itemize}

Giovanni Zini and Ferdinando Zullo\\
Dipartimento di Matematica e Fisica,\\
Universit\`a degli Studi della Campania ``Luigi Vanvitelli'',\\
I--\,81100 Caserta, Italy\\
{{\em \{giovanni.zini,ferdinando.zullo\}@unicampania.it}}
\end{document}